\pgfplotsset{compat=newest}
\tikzset{>=stealth'}
\tikzset{text deco/.style={postaction={decorate, decoration={text along path,#1}}}}
\ifpdf \DeclareGraphicsRule{*}{mps}{*}{} \fi 
\newdimen\LineSpace
\tikzset{
    line space/.code={\LineSpace=#1},
    line space=3pt
}
\newcommand{\ta}{\tilde{\alpha}}
\newcommand{\eps}{\varepsilon}
\newcommand{\Sh}{\mathcal{S}_h}
\newcommand{\uh}{u_h}
\newcommand{\vh}{v_h}
\newcommand{\wh}{w_h}
\newcommand{\zh}{z_h}
\newcommand{\Vhk}{V_h^k}
\newcommand{\Uh}{U_h}
\newcommand{\hmod}[1]{{#1}^{\mathrm{m}}_h}
\newcommand{\himod}[2]{{#1}^{\text{m}}_{#2,h}}
\newcommand{\Uhm}{\hmod{U}}
\newcommand{\Whm}{\hmod{W}}
\newcommand{\Zhmk}{\himod{Z}{\varkappa}}
\newcommand{\uhm}{\hmod{u}}
\newcommand{\vhm}{\hmod{v}}
\newcommand{\whm}{\hmod{w}}
\newcommand{\zhm}{\hmod{z}}
\newcommand{\sihm}{\himod{s}{i}}
\newcommand{\sjhm}{\himod{s}{j}}
\newcommand{\uhp}{u^{\text{pos}}_h}
\newcommand{\gam}{\boldsymbol{\gamma}}
\newcommand{\tgam}{\tilde{\boldsymbol{\gamma}}}
\newcommand{\gh}{\mathbf{g}_h}
\newcommand{\Rg}{\hmod{R}\!(\gam)}
\newcommand{\Rh}{\hmod{R}\!}
\newcommand{\Rtg}{\hmod{R}\!(\tgam)}
\newcommand{\Ihk}{I^k_h}
\newcommand{\Iwhk}{\tilde{I}^k_h}
\newcommand{\Oc}[1]{\Omega_{#1}^\textfrak{c}}
\newcommand{\g}{\nabla}
\newcommand{\embd}{\hookrightarrow}
\newtheorem{rem}{Remark}
\newtheorem{theorem}{Theorem}[section]
\newtheorem{lemma}[theorem]{Lemma}
\newtheorem{corollary}[theorem]{Corollary}
\theoremstyle{definition}
\theoremstyle{remark}
\numberwithin{equation}{section}
\begin{document}

\title[Higher order energy-corrected FEM]{Higher order energy-corrected finite element methods}

\author{Thomas Horger, Petra Pustejovska and Barbara Wohlmuth}
\address{Technische Universit\"{a}t M\"{u}nchen, Boltzmannstra\ss{}e 3, 85748 Garching bei M\"{u}nchen}
\curraddr{}
\email{horger@ma.tum.de,petra.pustejovska@ma.tum.de,wohlmuth@ma.tum.de}
\thanks{The financial support by the German Research Foundation (DFG) trough grants WO 671/11-1, WO 671/13-2  and WO 671/15-1 (within SPP 1748)  is gratefully acknowledged.}

\subjclass[2010]{65N15, 65N30}

\date{}

\dedicatory{}

\keywords{Corner singularities, energy-corrected finite element methods, pollution effect, re-entrant corners, higher order}

\begin{abstract}
	The regularity of the solution of elliptic partial differential equations in a polygonal domain with re-entrant corners is, in general, reduced compared to the one on a smooth convex domain. This results in a best approximation property for standard norms which depend on the re-entrant corner but does not increase with the polynomial degree. Standard Galerkin approximations are moreover affected by a global pollution effect. Even in the far field no optimal error reduction can be observed. Here, we generalize the energy-correction method for higher order finite elements. It is based on a parameter-dependent local modification of the stiffness matrix. We will show firstly that for such modified finite element approximation the pollution effect does not occur and thus optimal order estimates in weighted $L^2$-norms can be obtained. Two different modification techniques are introduced and illustrated numerically. Secondly we propose a simple post-processing step such that even with respect to the standard $L^2$-norm optimal order convergence can be recovered.
\end{abstract}

\maketitle

%!TEX root = main_ho.tex
\section{Introduction}

The standard theory of finite element methods on a family of uniformly refined meshes for elliptic partial differential equations guarantees an optimal order a priori error estimate for the approximation of degree $k$, namely $\mathcal{O}(h^{k})$ in the $H^1$-norm, provided the exact solution is in the Sobolev space $H^{k+1}(\Omega)$. Such regularity assumption on the solution can be violated by many factors, for example by low regularity of the right-hand side of the considered partial differential equation, non-smoothness of the boundary data or non-convexity. For example, for non-convex polygonal domains, the solution looses, in general, its smoothness due to the presence of the corner singularities. One can show then that independently of the approximation order $k$ the error in the $H^1$-norm will exhibit a reduced global rate $h^s$, $0<s<1$, with $s$ dependent of the maximal interior angle of the domain. Such behavior of the approximation scheme is well-known, see \cite{Brenner2008,Ciarlet2002,Grisvard1985,Strang2008} and the references therein. Moreover, the standard Aubin--Nitsche trick does not yield an extra $h$ factor in the $L^2$-norm estimate. Hence, the error decay of the finite element solution is qualitatively slower than the one of the best $L^2$-approximation. This gap between best approximation and Galerkin approximation is also known as pollution effect.

Up to now, there are several approaches which can handle the pollution. One possibility is an approach of adaptive mesh refinement near the re-entrant corner, see, e.g., \cite{Apel1998,Babuska1970,Schatz1979} and recently \cite{Bacuta2005,Bacuta2007,Brenner2011,Chen2010}, or even $h\!-\!p$ adaptivity, combining an increasing polynomial approximation order and a refinement of the mesh towards the singularity, see \cite{Babuska1973,Demkowicz2002,Elliotis2005,Melenk2002}, using least-squares finite element methods in weighted norms, see, e.g., \cite{Jeong2016,Lee2006}, or singular/dual singular function methods, see \cite{Fix1973,Blum1982}.  Alternative, and mainly engineering oriented approaches are the finite element space enrichment method, e.g., \cite{Babuska1973,Blum1982,Strang2008}, an elimination of the dominant error terms by  extrapolation, see \cite{Blum1988,Blum1990a}, or post-processed finite element methods using stress intensity factors, see \cite{Nkemzi2013,Seokchan2016}. Most of these techniques are well understood and of practical interest only in case of low order finite element approximations. 

In this work, we generalize an alternative approach based on the energy-corrected modification of the standard Galerkin approximation, as proposed in \cite{Egger2014} for linear finite elements, and further studied in \cite{Huber2015, John2015,Ruede2014}. Such schemes involve explicit knowledge of the asymptotic expansion of the solution near the singular corner and in comparison to the schemes mentioned above they require only a local and rather trivial change of the standard stiffness matrix. The idea  is similar to the one from \cite{Zegner1978} as proposed for low order finite differences and further generalized in \cite{Ruede1989,Ruede1986}. As a study problem, we consider the Poisson problem with homogeneous Dirichlet boundary conditions on a bounded polygonal domain $\Omega\subset\mathbb{R}^2$ with a single re-entrant corner $\omega\in(\pi,2\pi)$:
\begin{align}\label{prob}
	-\Delta w = g \quad \text{in }\Omega,\qquad w = 0 \quad \text{on } \partial\Omega,
\end{align}
for a given right-hand side $g\in L^2(\Omega)$ and unknown function $w\in H^1_0(\Omega)$ but in this case, in general, $w\not\in H^2(\Omega)$. We use here the standard notation of $L^2(\Omega)=H^0(\Omega)$, $H^m(\Omega)$, $m\in\mathbb{N}_0$ for the Sobolev spaces, and by $\|\cdot \|_{k,\Omega}$ we denote the corresponding norm. For simplicity, if clear from the context, we skip the $\Omega$-symbol in the norm-notation. By $H^1_0(\Omega)$, we denote the subspace of $H^1(\Omega)$ of all functions with vanishing trace. 

For the linear approximations, i.e., $k=1$, for $\omega>\pi$ a reduced best approximation property is to be observed:
\begin{align*}
	\inf_{\vh\in V_h}\Big(\|w-\vh\|_0 + h \|\nabla(w-\vh)\|_0 \Big) \lesssim h^{1+\frac{\pi}{\omega}},
\end{align*}
and one can show that the standard Galerkin approximation $\wh$ is non-optimal with respect to it. It actually satisfies only
\begin{align}\label{standG}
	\|w-\wh\|_0 + h^{\frac{\pi}{\omega}} \|\nabla(w-\wh)\|_0 \lesssim h^{2\frac{\pi}{\omega}}.
\end{align}
Egger et al. in \cite{Egger2014} proposed a construction of a new finite element approximation closing this gap. Moreover, if the error is measured in certain weighted spaces reflecting the singular behavior of the solution $w$, they obtained the standard best approximation, namely:
\begin{align*}
	\inf_{\vh\in V_h}\Big(\|r^{1-\frac{\pi}{\omega}}(w-\vh)\|_0 + h \|r^{1-\frac{\pi}{\omega}} \, \nabla(w-\vh)\|_0 \Big) \lesssim h^2,
\end{align*}
with $r$ being the distance to the re-entrant corner.

The observation of \cref{standG} is even more apparent for higher order methods, i.e., $k>1$, since the higher accuracy is completely lost for the non-smooth solutions. From this point of view,  standard higher order approximations on uniformly refined meshes are not widely used in this context. Our motivation is now to generalize the energy-corrected approach, and show that it is quasi-optimal with respect to the best approximation for the polynomial degree $k$:
\begin{align*}
	\inf_{\vh\in V_h}\Big(\|r^{k-\frac{\pi}{\omega}}(w-\vh)\|_0 + h \|r^{k-\frac{\pi}{\omega}} \nabla(w-\vh)\|_0 \Big) \lesssim h^{k+1}.
\end{align*}
This result will be stated in the main theorem of this work, i.e., in \cref{mainthm}. Also, if one is not interested in the use of the weighted norms $L^2_(\cdot)$, one can obtain the error bound in the standard $L^2$-norm for a post--processed solution $w^{\text{pos}}_h$, i.e.,
\begin{align*}
	\|w-w^{\text{pos}}_h\|_0 + h \| \nabla(w-w^{\text{pos}}_h)\|_0 \lesssim h^{k+1}.
\end{align*}
This will be more elaborated in \cref{col}.

Additionally to the standard Sobolev spaces, we shall use Kondratiev-type weighted spaces, defined for $m\in\mathbb{N}_0$ as follows:
\begin{align*}
	H^m_\beta(\Omega)\coloneqq\left\{f\in L^2_\beta(\Omega): r^{\beta-m+|\mu|} D^{\mu} f \in L^{2}(\Omega), |\mu|\leqslant m \right\},
\end{align*}
where  $\mu$ is a multi-index with $|\mu|=\sum_{i=1}^{2}{\mu_i}$, and $D^\mu$ represents the $\mu$-th generalized derivative, see, e.g., \cite{Kondratjev1967,Kozlov1997}. The space $H^{m}_\beta(\Omega)$ is equipped with the norm 
\begin{align*}
	\|f\|^2_{H^{m}_\beta(\Omega)}=\|f\|^2_{m,\beta}\coloneqq \sum_{|\mu|\leqslant m} \| r^{\beta-m+|\mu|} D^{\mu} f \|^2_0.
\end{align*}
We will also frequently use the following continuous embedding properties of the weighted Sobolev spaces for positive integers $m$ and $l$:
\begin{alignat}{2}
	H^{m+l}_{\alpha+l}(\Omega) \embd H^{m}_{\alpha}(\Omega), \quad (\alpha\in\mathbb{R}), \qquad \text{and} \qquad
	H^{m+l}_{\alpha}(\Omega) \embd \mathcal{C}(\overline{\Omega}), \quad (\alpha<m).\label{embd2}
\end{alignat}
For more details, see, e.g., \cite{kufner_1985}. The $L^2$-inner product and duality product on $\Omega$ are denoted by $(\cdot,\cdot)_\Omega$ and $\langle\cdot,\cdot\rangle_\Omega$. Further, the symbol $\lesssim$ is used for $\leqslant c$, where $c$ is a generic positive constant, always independent of the mesh size $h$.

The rest of this paper is organized as follows: In \cref{S:prob}, we introduce the energy-corrected finite element method and state our main theoretical result, \cref{mainthm}. \Cref{S:prelim} provides some standard auxiliary results. In \cref{S:proof}, we focus on the proof of our main results exploiting re-iteration techniques to obtain the higher order results.  We propose 
in \cref{S:gamma} two explicit options to define a suitable modification satisfying the previously formulated abstract conditions. Finally, in \cref{S:numerics}, we illustrate numerically the theoretical results on three different domains and finite element approximations up to to fourth order.

\section{Problem definition and main theorem}\label{S:prob}
We  approximate problem \cref{prob} by continuous finite elements of polynomial degree $k$. Whereby we denote the finite element space by
\begin{align*}
	\Vhk \coloneqq \big\{\vh\in H^1_0(\Omega),\, \vh|_{T}\in P_k(T)\ \ \forall T\in\mathcal{T}_h \big\},
\end{align*}
with $h>0$ being the mesh size, by $\{\mathcal{T}_h\}_{h>0}$ we denote a family of uniformly refined meshes, and by $P_k(T)$ the $k$-order polynomials on $T$. The discrete energy-corrected formulation of the problem \cref{prob} then takes the form: Find $\whm\in\Vhk$ such that
\begin{align}\label{modific}
	a_h(\whm,\vh) = ( g,\vh)_\Omega \qquad \forall \vh\in\Vhk,
\end{align}
where $a_h(u,v)\coloneqq a(u,v) - c_h(u,v)$ for all $u,v\in H^1_0(\Omega)$. The modification term $c_h(\cdot,\cdot)$ is defined only on a $\mathcal{O}(h)$-surrounding $\Sh$ of the re-entrant corner, see \cref{F:shmesh}.

\begin{figure}[ht!]
	\begin{center}
	\includegraphics[width = 0.3\textwidth]{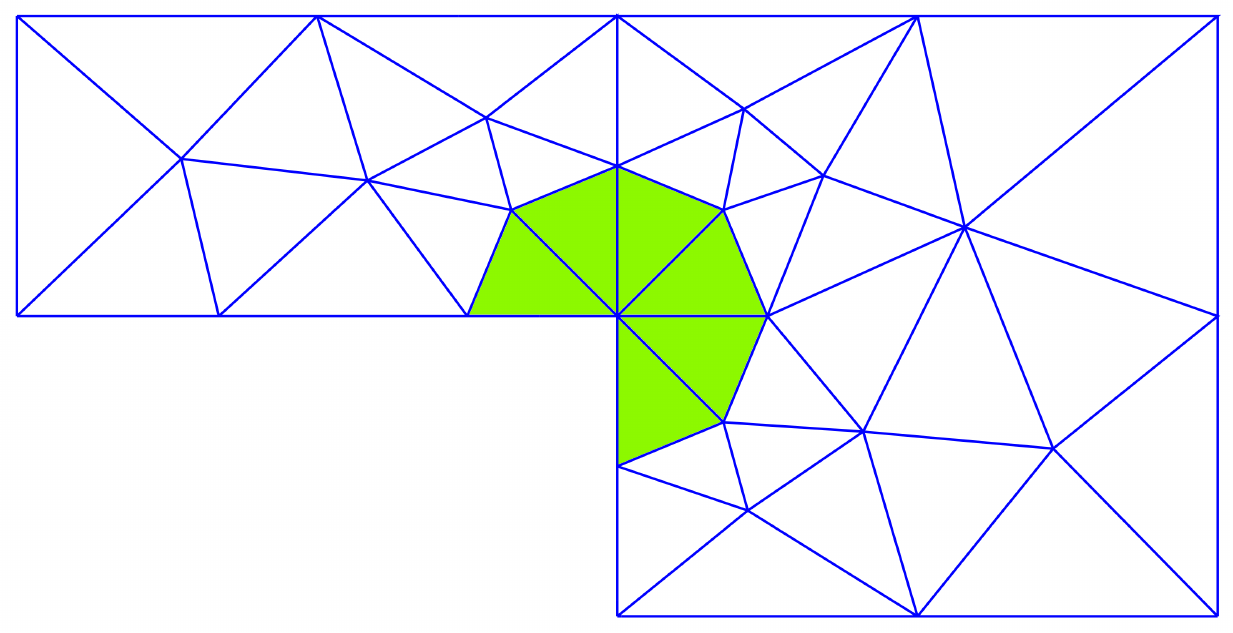}\hfill
	\includegraphics[width = 0.3\textwidth]{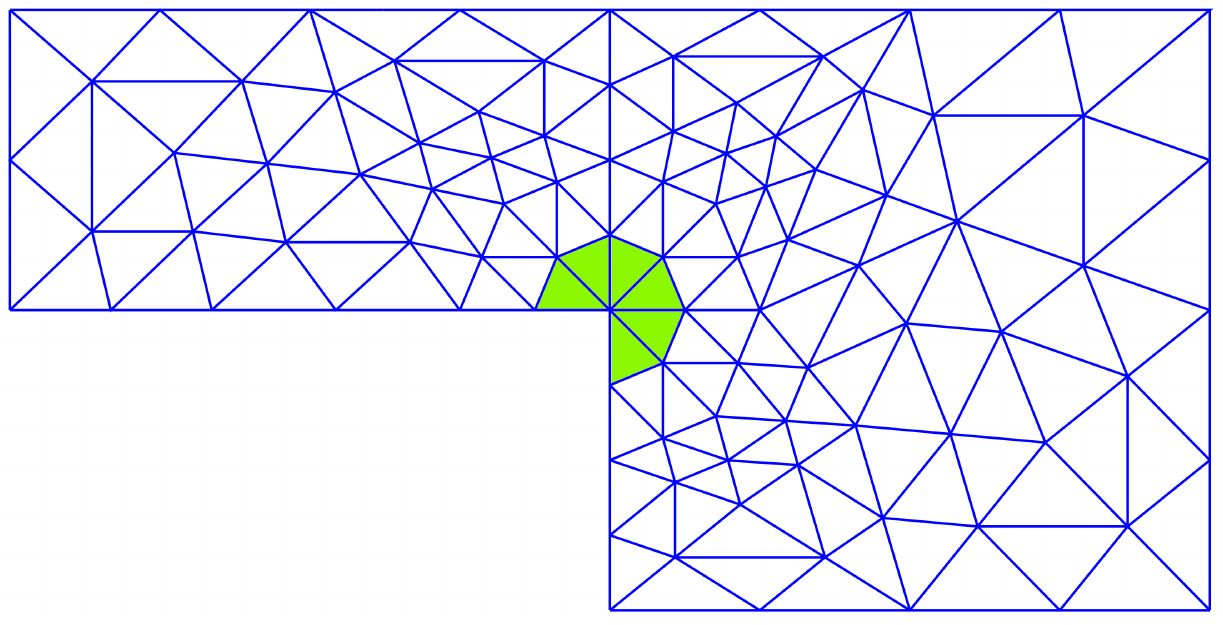}\hfill
	\includegraphics[width = 0.3\textwidth]{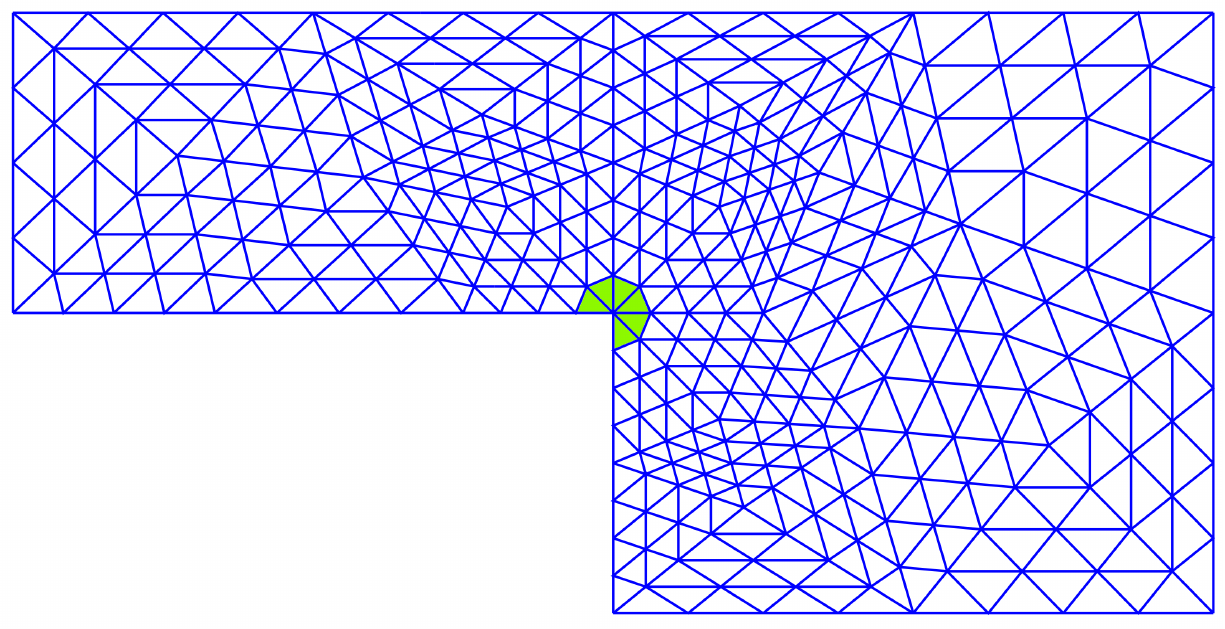}
	\end{center}
	\caption{Graphical illustration of the local uniformness of the initial mesh and two uniform refinement steps. By the green area we depict the $\Sh$-surrounding with the area of  $\mathcal{O}(h^2)$. Note, that in this example $\Sh$ is defined by a single element layer which, in general, does not need to be sufficient.}
	\label{F:shmesh}
\end{figure}

From \cref{modific}, we directly obtain a modified version of the Galerkin orthogonality:
\begin{align}\label{GO}
	a(w-\whm,\vh)+ c_h(\whm,\vh) =0 \qquad \text{for all }\vh\in\Vhk.
\end{align}

At the moment, we require on $c_h(\cdot,\cdot)$ only abstract conditions preserving uniform ellipticity, continuity and symmetry of the bilinear form $a_h(\cdot,\cdot)$. Namely, we assume for the rest of the paper that for all $w,v\in H^1_0(\Omega)$ it holds:
\begin{align*}\label{A}\tag{A}
	a_h(w,w)\gtrsim \|w\|^2_{1}, \qquad |c_h(w,v)|\lesssim \|w\|_{1,\Sh}\|v\|_{1,\Sh}, \qquad c_h(w,v)=c_h(v,w).
\end{align*}
Note that for $c_h(\cdot,\cdot)\equiv 0$, one obtains the standard Galerkin scheme. In that case, we denote the approximate solution by $\wh\in\Vhk$.
\begin{comment}
	 Such approximate solution of \cref{prob} exhibits, in general, a reduced approximation property, namely:
	\begin{align}\label{PA}
		\|w-\wh\|_{0} + h^{\lambda_1}\|\nabla(w-\wh)\|_{0} \lesssim h^{2\lambda_1},
	\end{align}
	see, e.g., \cite{Babuska1987,Blum1988}. Hence, our aim in the modification $c_h(\cdot,\cdot)$ is to retrieve the best approximation property for the problem with the re-entrant corners, i.e.,
	\begin{align}\label{BA}
		\|w-\whm\|_{0} + h\|\nabla(w-\whm)\|_{0}\lesssim h^{1+\lambda_1}.
	\end{align}
	In that case, the full $h^{k+1}$ convergence order of the approximation can be then reproduced in certain weighted norms (see \ref{mainthm}), or by a post-processing in the standard $L^2$-norm (see Corollary~\ref{col}). The difference between \cref{PA} compared to \cref{BA} is called pollution effect.
\end{comment}
One can show that the pollution effect, as discussed above, is contained in the scheme if and only if the pollution function defined as:
\begin{align}\label{gh}
	g_h(w) \coloneqq a(w-\whm,w-\whm) - c_h(\whm,\whm),
\end{align}
converges sub-optimally. More on the pollution effect and its relation to the pollution function can be found the energy-correction method literature \cite{Egger2014,Ruede1989, Ruede2014,Ruede1986,Zegner1978}. Nevertheless, not all parts of the pollution function $g_h(w)$ are of an decreased order. To see this, and thus to specify the sufficient condition on the optimal convergence, we will use an expansion of the solution $w$ into the known eigenfunctions (also called singular functions) of the Laplace operator which can be expressed in the polar coordinates $(r,\theta)$ with respect to the re-entrant corner as:
\begin{align}\label{sidef}
	s_i =\eta(r) r^{\lambda_i}\sin(\lambda_i\theta)=\eta(r) r^{\frac{i\pi}{\omega}}\sin\left(\frac{i\pi}{\omega} \theta\right), \qquad i\in\mathbb{N},
\end{align}
where $\eta(r)$ is a cut-off function on $\mathcal{B}_{a}^2(0)=\{x\in\mathbb{R}^2:\ \|x\|_{l^2}<a\}$ for sufficiently small $a$  such that $\text{supp}(\eta)\subset \Omega$, and for $0<b<a$ it holds: $\eta(r)=1$ for $0\leqslant r\leqslant b$, $\eta(r)=0$ for $r\geqslant a$, and $\eta$ is sufficiently smooth on $[b,a]$. The factors $\lambda_i=\frac{i\pi}{\omega}$ are the corresponding $\omega$-dependent eigenvalues, uniquely ordered as: $\lambda_1>\lambda_2>...$, for all $\omega\in(\pi,2\pi)$. Note that $s_i\in H^{2}_{1-\lambda_i+\eps}(\Omega)$ and also $s_i\in H^{k+1}_{k-\lambda_i+\eps}(\Omega)$, $\eps>0$, $\Delta s_i =0$ on $\Omega\cap\mathcal{B}_b^2(0)$, $i\in\mathbb{N}$, and $s_i$ are mutually orthogonal, i.e., $(s_i,s_j)_\Omega=0$ for $i\neq j$. More on the singular functions and the eigenvalue problem can be found, e.g., in \cite{Dauge1988,Kozlov1997}. The general solution of \cref{prob} admits then for a regular right-hand side an expansion, see \cite{Kondratjev1967}, which can be formulated as follows:
\begin{lemma}[Expansion of the solution]\label{l:decomp}
	Let $\Omega$ be a polygonal with a single re-entrant corner of an angle $\omega\in(\pi,2\pi)$, $m\in\mathbb{N}_0$, and $g\in H^m_{-\gamma}(\Omega)$ for $\gamma>1+m-\lambda_1$. Also, let $N_w\in\mathbb{N}$ be such that 
	\begin{align*}
		\forall  i=1,...,N_w: \quad \lambda_i<1+m+\gamma, \qquad \text{and} \qquad \lambda_{N_w+1}>1+m+\gamma.
	\end{align*}
	Then the unique solution $w\in H^1_0(\Omega)$ of \cref{prob} belongs also to $H^{m+2}_\gamma(\Omega)$ and it can be decomposed into
	\begin{align}\label{sumdec}
		w=\sum_{i=1}^{N_w} \mu^w_i s_i + W, 
	\end{align}
	where $W\in H^{m+2}_{-\gamma}(\Omega)\cap H^1_0(\Omega)$ and $\mu_i^w\in\mathbb{R}$. Moreover, it satisfies the following a~priori estimates:
	\begin{align}\label{aest}
		\|w\|_{m+2,\gamma}\lesssim \|g\|_{m,-\gamma}, \qquad \sum_{i=1}^{N_w} |\mu^w_i| + \|W\|_{m+2,-\gamma} \lesssim \|g\|_{m,-\gamma}.
	\end{align}
\end{lemma}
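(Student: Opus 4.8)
The statement is the classical Kondratiev decomposition, and I would follow the Mellin-transform technique of \cite{Kondratjev1967,Kozlov1997,Dauge1988}; the only real care needed is tracking the weights $\pm\gamma$ against the spectrum of the associated operator pencil. \emph{First I would localize.} Choosing a smooth cut-off $\chi$ equal to $1$ near the re-entrant corner and supported in $\mathcal{B}_a^2(0)$, I split $w=\chi w+(1-\chi)w$. On $\operatorname{supp}(1-\chi)$ the distance $r$ is bounded away from $0$, so the weights are inert and the standard elliptic shift theorem on the smooth and convex parts of $\partial\Omega$ gives $(1-\chi)w\in H^{m+2}$ with the desired bound; the commutator $[\Delta,\chi]w$ is supported in the annulus $\{b\le r\le a\}$ and is therefore a smooth, compactly supported perturbation of the data that does not affect the weighted estimates. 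Thus everything reduces to the model problem $-\Delta u=f$ on the infinite sector $K=\{(r,\theta):r>0,\ 0<\theta<\omega\}$ with homogeneous Dirichlet conditions at $\theta\in\{0,\omega\}$.

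\emph{Reduction to the pencil.} Writing $-\Delta$ in polar coordinates and applying the Mellin transform in $r$ turns $-r^2\Delta$ into the holomorphic operator pencil $\mathcal{A}(\lambda)=-\partial_\theta^2-\lambda^2$ on $(0,\omega)$ with Dirichlet boundary conditions. Its spectrum is exactly $\{\pm\lambda_i\}_{i\in\mathbb{N}}$, with eigenfunctions $\sin(\lambda_i\theta)$, so that the residue of $\mathcal{A}(\lambda)^{-1}$ at $\lambda_i$ reproduces the radial–angular profile $r^{\lambda_i}\sin(\lambda_i\theta)$ of the singular function $s_i$ from \cref{sidef}. Under this transform each weighted space $H^{m+2}_\beta$ corresponds, via the Mellin–Parseval identity, to square-integrability of the transformed solution over a vertical line $\ell_\beta$ in the $\lambda$-plane, and on every line free of the real eigenvalues $\pm\lambda_i$ the inverse $\mathcal{A}(\lambda)^{-1}$ exists and satisfies resolvent bounds uniform as $|\operatorname{Im}\lambda|\to\infty$.

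\emph{Isomorphism and contour shift.} These resolvent bounds upgrade to the statement that $-\Delta\colon H^{m+2}_\beta\cap H^1_0\to H^m_\beta$ is an isomorphism for every admissible weight $\beta$. The hypothesis $\gamma>1+m-\lambda_1$ makes $\ell_\gamma$ admissible, which already yields $w\in H^{m+2}_\gamma$ and the first estimate $\|w\|_{m+2,\gamma}\lesssim\|g\|_{m,-\gamma}$ (using $H^m_{-\gamma}\hookrightarrow H^m_\gamma$ since $\gamma>0$). For the finer splitting I represent the local part of $w$ by the inverse Mellin integral over $\ell_\gamma$ and shift the contour to $\ell_{-\gamma}$. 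By the residue theorem the shift collects one term per pole crossed, and a pole $\lambda_i$ is crossed precisely when $1+m-\gamma<\lambda_i<1+m+\gamma$: the lower bound holds for every $i$ because $\lambda_i=\tfrac{i\pi}{\omega}\ge\lambda_1$ together with $\gamma>1+m-\lambda_1$, while the upper bound is exactly the defining property of $i\le N_w$. Hence precisely $\lambda_1,\dots,\lambda_{N_w}$ contribute, each residue equals $\mu_i^w\,r^{\lambda_i}\sin(\lambda_i\theta)$ for a scalar $\mu_i^w$, and the shifted integral over $\ell_{-\gamma}$ represents a function in $H^{m+2}_{-\gamma}$.

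\emph{Assembling the conclusion.} The coefficients $\mu_i^w$ arise as contour integrals of $\mathcal{A}(\lambda)^{-1}\hat f$ around $\lambda_i$, hence are bounded linear functionals of $g$, and the remainder is controlled by the resolvent estimate on $\ell_{-\gamma}$; together these give $\sum_i|\mu_i^w|+\|W\|_{m+2,-\gamma}\lesssim\|g\|_{m,-\gamma}$. The residue profiles differ from the $s_i$ of \cref{sidef} only through the cut-off $\eta$: writing $r^{\lambda_i}\sin(\lambda_i\theta)=s_i+(1-\eta)\,r^{\lambda_i}\sin(\lambda_i\theta)$ and noting that the last term is supported in $\{r\ge b\}$, hence smooth, I absorb it, together with the far-field and commutator contributions of the localization step, into $W$, which stays in $H^{m+2}_{-\gamma}\cap H^1_0$. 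I expect the genuine work to be twofold: establishing the uniform-in-$\operatorname{Im}\lambda$ resolvent bounds for $\mathcal{A}(\lambda)^{-1}$ that turn pointwise meromorphy into the weighted isomorphism and the norm estimates, and the exponent bookkeeping that pins $\ell_{\pm\gamma}$ relative to $\{\pm\lambda_i\}$ so that exactly $\lambda_1,\dots,\lambda_{N_w}$ are extracted and the remainder lands in the stronger space $H^{m+2}_{-\gamma}$.
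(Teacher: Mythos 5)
This lemma is not proved in the paper at all: it is quoted as a known result from the Kondratiev literature (\cite{Kondratjev1967}, see also \cite{Kozlov1997}), and your Mellin-transform sketch is precisely the classical argument from those references. Your bookkeeping also matches the statement: since $m+1-\gamma<\lambda_1$, the poles crossed when shifting between the Mellin lines for the weights $\gamma$ and $-\gamma$ are exactly those with $\lambda_i<m+1+\gamma$, i.e.\ $\lambda_1,\dots,\lambda_{N_w}$, so the decomposition and both a priori estimates come out as stated.
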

The function $W$ is called the smooth remainder, and $\mu^w_i$ are called the stress intensity factors. Realizing that the singular functions $s_i$ and the smooth remainder $W$ are also solutions of Poisson problems with specific right-hand sides, one can split the approximation error by:
\begin{align}\label{errsplit}
	w-\whm = \sum_{i=1}^{N_w} \mu^w_i(s_i - \sihm) + W-\Whm,
\end{align}
where $\sihm$ and $\Whm$ represent the modified approximations of $s_i$ and $W$, respectively.

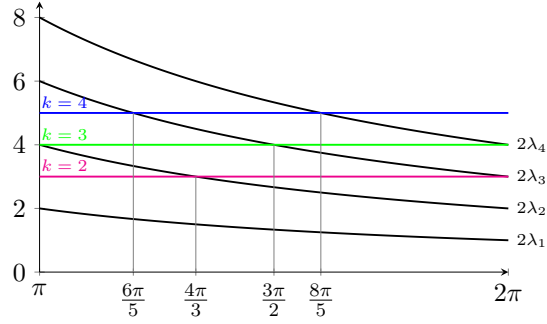
\begin{figure}
\begin{center}
\begin{tikzpicture}[scale=0.9,font=\large]
	\def\piv{3.1415926535897932385}
	\def\dpiv{6.283185307179586477}
	\def\ca{3.769911184}
	\def\cb{4.188790205}
	\def\cc{4.71238898}
	\def\cd{5.026548246}
	\begin{axis}[
		yscale=0.7,ymax=8.5,ymin=0,xmin=pi,xmax=2*pi,xtick={\piv,\ca,\cb,\cc,\cd,\dpiv},
		xticklabels={$\pi$,$\frac{6\pi}{5}$,$\frac{4\pi}{3}$,$\frac{3\pi}{2}$,$\frac{8\pi}{5}$,$2\pi$},axis x line=bottom,axis y line=left, ylabel={},
		every axis x label/.style={at={(ticklabel* cs:1.05)},anchor= north east},clip=false
		]
	\addplot[domain=pi:2*pi,color=black,thick,samples=100] {2*pi/x};
	\addplot[domain=pi:2*pi,color=black,thick,samples=100] {4*pi/x};
	\addplot[domain=pi:2*pi,color=black,thick,samples=100] {6*pi/x};
	\addplot[domain=pi:2*pi,color=black,thick,samples=100] {8*pi/x};
	\addplot[domain=pi:2*pi,color=magenta,thick] {3};\addplot[domain=pi:2*pi,color=green,thick] {4};\addplot[domain=pi:2*pi,color=blue,thick] {5};
	\draw[thin,color=gray] (axis cs:\ca,0) -- (axis cs:\ca,5);\draw[thin,color=gray] (axis cs:\cb,0) -- (axis cs:\cb,3);
	\draw[thin,color=gray] (axis cs:\cc,0) -- (axis cs:\cc,4);\draw[thin,color=gray] (axis cs:\cd,0) -- (axis cs:\cd,5);

	\node [anchor=west] at (axis cs:\dpiv,1) {\scriptsize{$2\lambda_1$}};
	\node [anchor=west] at (axis cs:\dpiv,2) {\scriptsize{$2\lambda_2$}};
	\node [anchor=west] at (axis cs:\dpiv,3) {\scriptsize{$2\lambda_3$}};
	\node [anchor=west] at (axis cs:\dpiv,4) {\scriptsize{$2\lambda_4$}};
	\node [anchor=south west,magenta] at (axis cs:\piv-0.045,3-0.1) {\scriptsize{$k=2$}};
	\node [anchor=south west,green] at (axis cs:\piv-0.045,4-0.1) {\scriptsize{$k=3$}};
	\node [anchor=south west,blue] at (axis cs:\piv-0.045,5-0.1) {\scriptsize{$k=4$}};
	\end{axis}
\end{tikzpicture}
\end{center}
	\caption{Graphical illustration of the total number $K$ of essential pollution functions $g_h(s_i)$. Namely, the value $K$ is determined by the number of thick lines below the horizontal approximation line. For example, in the case of the approximation order $k=4$, we have: $K=2$ for $\omega\in(\pi,\frac{6}{5}\pi]$, $K=3$ for $\omega\in(\frac{6}{5}\pi,\frac{8}{5}\pi]$, and $K=4$ for $\omega\in(\frac{8}{5}\pi,2\pi)$. The colored lines represent $k+1$ bounds.}
	\label{F:dist}
\end{figure}

In the theoretical part of this work, we will often use the decomposition \cref{sumdec} and its properties for both, the primal and the dual problem, each of them having different regularities and thus different decomposition series. Therefore, for better readability, we strictly use a notation of $(u,\uh,\uhm)$ and $(z,\zh,\zhm)$ for the primal solution, its standard and modified approximation, and the dual solution, and its standard and modified approximation, respectively. The auxiliary functions, on the other hand, we denote by $w$ or $v$, and correspondingly their finite element approximations by $\wh$ and $\vh$. Note also that the singular functions are the same for both solutions, $u$ and $z$, but we will distinguish between the constants, $\mu^u_i$ and $\mu^z_i$, and the regular remainders $U$ and $Z$. 

\begin{rem}\label{indices}
	For better readability, we specify typical indices and weights for the primal and the dual problem as used in \cref{l:decomp}. Let $k$ be the finite element approximation order, then
	\begin{alignat*}{3}
		\bullet&\text{ primal problem }u:&&&&\\[-0.5em]
		&\quad m=k-1, & \gamma&=\ta\coloneqq 1-\lambda_1+\eps, \qquad  & N_u&=\smash[b]{\left\lfloor(k+1)\frac{\omega}{\pi} -1 \right\rfloor},\\
		\bullet&\text{ dual problem }z:&&&&\\[-0.5em]
		&\quad m=0, & \gamma&=\alpha\coloneqq \ta + k -1 , \qquad & N_z&= N_u,
	\end{alignat*}
	
	\noindent where $\lfloor x \rfloor$, $x\in\mathbb{R}$, represent the floor function, namely $\lfloor x \rfloor=\max(l)$, $l\in\mathbb{N}$,  $l\leqslant x$. 
\end{rem}

Associated with the order $k$ of the finite element space and the angle $\omega$ of the re-entrant corner, we define a correction order $K$: 
\begin{align}\label{Kdef}
	K \coloneqq \left\{  
	\begin{array}{cl}
	\left \lfloor (k+1)\frac{\omega}{2\pi} \right\rfloor , & \quad \text{if }  (k+1)\frac{\omega}{2\pi} \not\in \mathbb{N}, \\[0.3cm]
	(k+1)\frac{\omega}{2\pi} -1 & \quad \text{else.}  
	\end{array} \right.
\end{align}
Then in terms of \cref{sumdec}, the  pollution function $g_h(u)$ can be decomposed into:
\begin{align}\label{ghu}
	g_h(u) = \sum_{i=1}^{K} (\mu_i^u)^2 g_h(s_i) + \sum_{i\neq j=1}^{N_u} \mu_i^u\mu_j^u\, \tilde{g}_h(s_i,s_j) + \text{extra terms},
\end{align}
where we set
\begin{align*}
	\tilde{g}_h(v,w) \coloneqq a(v-\hmod{v},w-\hmod{w}) - c_h(\hmod{v},\hmod{w}), \quad \text{i.e.,}\quad g_h(w) = \tilde{g}_h(w,w).
\end{align*}

As we will show later, $g_h(s_i)$ are essential pollution terms due to their non-conditional reduced convergence order. The possible pollution of $\tilde{g}_h(s_i,s_j)$ is non-essential in a sense that it can be balanced by some local symmetry properties of the mesh. The extra terms do not contribute to the pollution. We aim thus to construct the modification $c_h(\cdot,\cdot)$ in such a way, that we correct the essential pollution terms in~\cref{ghu}. 

\begin{figure}
	\centerline{\includegraphics[width = 0.7\textwidth]{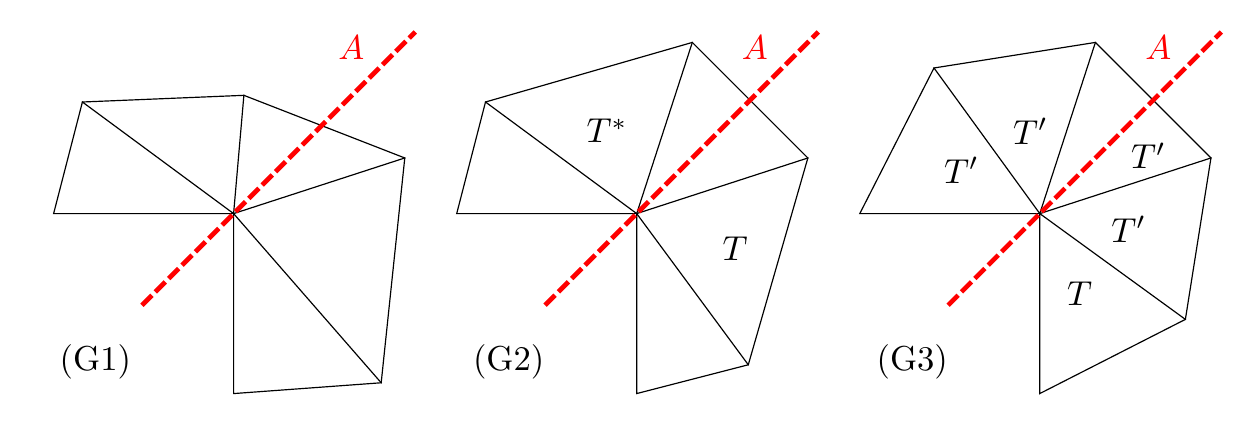}}
	\caption{Symmetry conditions of the local mesh at the re-entrant corner. For simplicity, we depict only a first layer of elements. The dashed line $A$ represents the bisectional axis of the angle $\omega$, element $T^*$ represents the mirror image  across $A$ of $T$, and $T'$ represent the rotation of $T$ by $\frac{\omega}{n}$ around the re-entrant corner. Here $n$ is the number of elements directly attached to it.}
	\label{F:sym}
\end{figure}

Let us now turn our attention to the additional assumptions we lay on the mesh. In the following, we distinguish between several cases of combinations between $\omega$ and the approximation order $k$ and assign to them three different mesh properties (G1)--(G3) which are schematically represented in \cref{F:sym} for a typical example $\omega=3/2\pi$. Note, that (G1)--(G3) are assumed on the $\Sh$-domain of the coarsest refinement level and thus, by uniform refinement, they will automatically hold on each $\Sh$. More precisely, (G1) represents a general mesh without any specific symmetry properties, (G2) stands for a mesh with a local mirror-symmetry, i.e., to each element $T\in\Sh$ exists another $T^*\in\Sh$ which is a mirror image of $T$ across the bisectional axis $A$. The last property (G3) represents a full local symmetry at the singular corner in the sense that $\Sh$ is composed of one base element which is then rotated around the re-entrant corner. We say that an initial mesh has the \cref{U}-property which depends on $k$ and $\omega$ if
\begin{align}\label{U}\tag{U}
	\begin{cases}
		\ \text{(G1) holds for } k=1\ \text{ and }\ \omega<\frac{3}{2}\pi, & \\ 
		\ \text{(G2) holds for } k=1\ \text{ and }\ \omega\geqslant\frac{3}{2}\pi, \text{ or } k=2 \text{ and } \omega<\frac{4}{3}\pi,& \\
		\ \text{(G3) holds for } k=2\ \text{ and }\ \omega\geqslant\frac{4}{3}\pi, \text{ or } k\geqslant 3.&
	\end{cases}
\end{align}
Associated with the \cref{U}-property of a mesh, we make here an assumption on the interpolation property. If the mesh satisfies the \cref{U}-property, we assume that for $\lambda_i + \lambda_j < k+1$ we find 
\begin{align}\label{assint}
	 \int_{\Upsilon} \nabla s_j\cdot \nabla \Ihk s_i\, dx=\int_{\Upsilon} \nabla s_j \cdot \nabla s_i \, dx=0, \qquad i\neq j,
\end{align}
where $\Upsilon$ is the domain $\Sh$ of the coarsest refinement level. We call this assumption \cref{U}-assumption. We point out that \cref{assint} does not hold for arbitrary shaped meshes at the re-entrant corner. However, all our numerical study tests have shown that the \cref{U}-property is sufficient for \cref{assint} to hold, see \cref{tab:u3ortho} in \cref{S:numerics}.

We will show later that under the \cref{U}-assumption the mixed terms $\tilde{g}_h(s_i,s_j)$, $i\neq j=1,...,N_u$, converge optimally, and thus, the global pollution in the error originates strictly from $g_h(s_i)$ terms, $i\leqslant K$. Hence, we call $K$ the correction order, since it represents the number of conditions we lay on the modification. Its value can be simply read out from \cref{F:dist}. Now, we are ready to formulate the main theorem on the order of convergence, the proof will be shifted to the \cref{S:proof}.
\begin{theorem}[Main theorem]\label{mainthm}
	Let $k\in\mathbb{N}$ denotes the polynomial degree of the finite element space, $f \in H^{k-1}_{-\ta}(\Omega)$ for some $  \ta>1-\lambda_1$, and let $\omega$ be the angle of the re-entrant corner. Let us assume that \cref{A} and \cref{U}-assumptions hold and moreover that the modification $c_h(\cdot, \cdot)$ satisfies:
	\begin{align}\label{assmthm}
		g_h(s_i) = \mathcal{O}(h^{k+1}) \qquad \text{for all } i\leqslant K.
	\end{align}
	Then, we have the following optimal convergence results of the modified approximation in the weighted norms with $\alpha = \ta + k-1$:
	\begin{align}\label{woc}
		\|u-\uhm\|_{0,\alpha} \lesssim h^{k+1} \| f\|_{k-1,-\ta} \quad \mbox{and} \quad \|\nabla(u-\uhm)\|_{0,\alpha}\lesssim h^k \|f\|_{k-1,-\ta}.
	\end{align}
\end{theorem}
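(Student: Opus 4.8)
The plan is to treat the two bounds in \cref{woc} in turn, building on the additive error splitting \cref{errsplit}, and to reduce the weighted $L^2$-estimate to the control of a single cross pollution quantity through a duality argument adapted to the weighted inner product. I would first establish $\|\g(u-\uhm)\|_{0,\alpha}\lesssim h^k\|f\|_{k-1,-\ta}$. Coercivity and continuity from \cref{A} together with the modified Galerkin orthogonality \cref{GO} reduce the error to a weighted best-approximation term plus a $c_h$-correction localized on $\Sh$. Applying this to each summand of \cref{errsplit}: the regular remainder $U\in H^{k+1}_{-\ta}\cap H^1_0$ furnished by \cref{l:decomp} admits an optimal $\mathcal{O}(h^k)$ weighted interpolation estimate, while each singular function satisfies $s_i\in H^{k+1}_{k-\lambda_i+\eps}$, so that, after matching weights via the embeddings \cref{embd2}, its weighted interpolation error is again $\mathcal{O}(h^k)$ in the $\|\g\cdot\|_{0,\alpha}$-norm. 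Summing over $i\le N_u$ and absorbing the correction with the $\Sh$-continuity bound of \cref{A} yields the gradient estimate.

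\textbf{The duality identity.}
For the weighted $L^2$-bound I would run an Aubin--Nitsche argument against the weighted dual problem $-\Delta z = r^{2\alpha}(u-\uhm)$ with $z|_{\partial\Omega}=0$, noting that its data lie in $L^2_{-\alpha}$ with $\|r^{2\alpha}(u-\uhm)\|_{0,-\alpha}=\|u-\uhm\|_{0,\alpha}$, exactly the dual setting $m=0,\ \gamma=\alpha$ of \cref{indices}. Testing with $u-\uhm$ gives $\|u-\uhm\|_{0,\alpha}^2=a(z,u-\uhm)$; inserting $\zhm$ and using \cref{GO} for both the primal and the dual problem collapses the right-hand side precisely to the cross pollution function,
\begin{align*}
	\|u-\uhm\|_{0,\alpha}^2 = a(u-\uhm,\,z-\zhm) - c_h(\uhm,\zhm) = \tilde{g}_h(u,z).
\end{align*}
This identity is the structural heart of the argument: the entire weighted error is governed by one bilinear pollution quantity, which then only needs to be expanded and estimated term by term.

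\textbf{Expansion and term-by-term control.}
I would insert the decompositions $u=\sum_{i}\mu^u_i s_i + U$ and $z=\sum_{j}\mu^z_j s_j + Z$ into $\tilde{g}_h(u,z)$ and exploit its bilinearity. The diagonal contributions reproduce $(\mu^u_i)^2$-weighted copies of $g_h(s_i)$ as in \cref{ghu}, which for $i\le K$ are $\mathcal{O}(h^{k+1})$ by hypothesis \cref{assmthm} and for $i>K$ are already optimal since there $2\lambda_i>k+1$ by \cref{Kdef}; the off-diagonal terms $\tilde{g}_h(s_i,s_j)$, $i\neq j$, lose their non-convergent part thanks to the local gradient orthogonality \cref{assint} granted by the \cref{U}-assumption, leaving an optimally converging remainder; and every term carrying a smooth remainder $U$ or $Z$ is controlled by the weighted interpolation estimates of the first step. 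Because the dual data scale as $\|u-\uhm\|_{0,\alpha}$, the a~priori bound of \cref{l:decomp} forces $\sum_j|\mu^z_j|+\|Z\|$ to carry one factor of $\|u-\uhm\|_{0,\alpha}$, so that $\tilde{g}_h(u,z)\lesssim h^{k+1}\|f\|_{k-1,-\ta}\,\|u-\uhm\|_{0,\alpha}$, and division yields \cref{woc}.

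\textbf{Main obstacle.}
The delicate point, and the reason for the re-iteration announced in \cref{S:proof}, is that the sharp weighted estimates feeding the off-diagonal terms $\tilde{g}_h(s_i,s_j)$ and the whole duality cannot be reached in a single pass: the dual solution has only capped weighted regularity near the corner, so one basic duality step shifts the weight by a single unit, whereas the target is $\alpha=\ta+k-1$. I would therefore bootstrap, iterating the identity above through a chain of weighted spaces with the exponent raised by one at each of the $k-1$ stages and feeding the estimate obtained at one level into the dual right-hand side at the next. The careful bookkeeping is to keep each intermediate weight below the decomposition thresholds $\lambda_i<1+m+\gamma$ of \cref{l:decomp}, so that $N_u$, $N_z$ and the correction order $K$ remain synchronized throughout, and to verify that the weighted regularity of each dual solution suffices at every stage while the accumulated constants stay independent of $h$.
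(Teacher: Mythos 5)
Your overall skeleton for the weighted $L^2$-bound (duality against $-\Delta z=r^{2\alpha}(u-\uhm)$, expansion of $\tilde g_h(u,z)$ into diagonal, off-diagonal and smooth-remainder terms, and a bootstrap over weights because one duality pass only gains one unit of weight) does match the paper's strategy in \cref{weiconv}. However, there are two genuine gaps. First, your opening step is wrong as stated: coercivity and continuity from \cref{A} plus the Galerkin orthogonality \cref{GO} give quasi-optimality only in the $H^1$-norm, not in the weighted norm $\|\g\cdot\|_{0,\alpha}$. If a C\'ea-type argument in weighted norms were available, it would apply equally to the uncorrected scheme $c_h\equiv 0$ and would contradict the pollution effect (and the paper's own numerics, where the standard scheme shows the reduced rate $\approx\lambda_1$ in $H^1_\alpha$ for $k=2$). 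The paper instead obtains the weighted gradient bound from the weighted Wahlbin-type local energy estimate \cref{whlb}, whose right-hand side contains $\|u-\uhm\|_{0,\beta(\varkappa-1)}$; hence the gradient estimate at weight $\beta(\varkappa)$ \emph{requires} the $L^2$ estimate at the previous weight, and the two bounds must be proved interleaved inside the induction — the gradient bound cannot be established first and on its own, and in particular it cannot be had without the correction hypothesis \cref{assmthm}, which your first step never invokes.

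Second, your treatment of the off-diagonal terms $\tilde g_h(s_i,s_j)$, $i\neq j$, is too optimistic: the mesh orthogonality \cref{assint} concerns $\nabla\Ihk s_i$, not the discrete solution $\sihm$, and quantifying the resulting ``defect in orthogonality'' is exactly the hard part. In the paper (\cref{l:aprconv2}) this costs a localization argument with a discrete harmonic lifting $q_h$ and trace estimates, and it only closes if one already knows the \emph{local} error bound $\|s_i-\sihm\|_{0,\Oc{1}}\lesssim h^\sigma$ (assumption \cref{assopt}). That local bound is not free: it is produced by a second, separate induction over nested subdomains (\cref{l:gradhigh}) which interleaves Wahlbin estimates with \cref{l:aprconv2} and \cref{weiconv} at increasing orders $\sigma=1,\dots,k$. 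Your proposal bootstraps only over the weights in the duality argument and omits this second bootstrap entirely, so the off-diagonal control you assert is unsupported. A smaller but real related issue: the cross terms $s_i\times Z$ cannot be handled by ``weighted interpolation estimates,'' since the dual remainder $Z$ has only $H^2$-type weighted regularity ($m=0$ in \cref{indices}); the paper estimates them by pairing the current-weight gradient bound for $s_i-\sihm$ with a first-order interpolation error for $Z$, which again presupposes the interleaved induction.
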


The fundamental result of \cref{mainthm} is that an error decay of order $k+1$ for the  modified Galerkin approximation can be observed in a suitable norm. However, to obtain $\mathcal{O}(h^{k+1})$ estimates, the weaker weighted $L^2$-norm $\|\cdot\|_{0,\alpha}$ have to be considered. To recover the approximation order with respect to the standard $L^2$-norm, we can introduce a post-process step on $\uhm$.  First note, that the stress intensity factors, see, e.g., \cite{Grisvard1985}, can be also represented by the singular functions $s_{-i}$ of the adjoint operator, i.e.,
\begin{align*}
	\mu^u_i =\frac{1}{i\pi}\int_{\Omega} \left( f s_{-i} + u\Delta s_{-i}\right) dx, \quad\text{where}\quad s_{-i} = \eta(r) r^{-\lambda_i} \sin{\lambda_i \theta}.
\end{align*}
For this characterization, we can define an approximation of the stress intensity factor as: 
\begin{align*}
	\mu_i^h \coloneqq \frac{1}{i\pi}\int_{\Omega} \left( f s_{-i} + \uhm\Delta s_{-i}\right) dx,
\end{align*}
and thus, by \cref{woc} and $\Delta s_{-i} = 0$ on $\Omega\cap\mathcal{B}_a^2(0)$, see the properties after \Cref{sidef}, we obtain directly for all $i\leqslant N_u$ that 
\begin{align}\label{convmu}
	|\mu^u_i-\mu_i^h|=\frac{1}{i\pi}\left|\int_{\Omega} \left( u- \uhm\right)\Delta s_{-i} dx\right|\lesssim \|u-\uhm\|_{0,\alpha}\|\Delta s_{-i}\|_{0,-\alpha}
	\lesssim h^{k+1}.
\end{align}

\begin{corollary}\label{col}
	Let assume that the assumptions of \cref{mainthm} hold. Then,  the post-processed solution defined as
	\begin{align*}
		\uhp \coloneqq \uhm + \sum_{i=1}^{K}\mu_i^h (s_i - \sihm),
	\end{align*}
	converges in standard $L^2$-norm as
	\begin{align*}
		\|u-\uhp\|_{0} \lesssim h^{k+1}.
	\end{align*}
\end{corollary}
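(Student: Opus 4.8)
The plan is to substitute the error decomposition \cref{errsplit} (written for the primal solution $u$ with regular remainder $U$ and stress intensity factors $\mu^u_i$, cf.\ \cref{l:decomp}) into the definition of $\uhp$ and to regroup the resulting error according to whether a singular mode is touched by the post-processing, is an uncorrected higher mode, or belongs to the smooth part. Since $\uhp=\uhm+\sum_{i=1}^{K}\mu_i^h(s_i-\sihm)$ and $u-\uhm=\sum_{i=1}^{N_u}\mu^u_i(s_i-\sihm)+(U-\Uhm)$, I would begin from
\begin{align*}
 u-\uhp=\sum_{i=1}^{K}(\mu^u_i-\mu_i^h)(s_i-\sihm)+\sum_{i=K+1}^{N_u}\mu^u_i(s_i-\sihm)+(U-\Uhm),
\end{align*}
and estimate the three sums, called $\mathrm{(I)}$, $\mathrm{(II)}$ and $\mathrm{(III)}$, separately in the standard $L^2$-norm.

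For $\mathrm{(I)}$ I would use the triangle inequality together with the convergence of the approximate stress intensity factors \cref{convmu}, namely $|\mu^u_i-\mu_i^h|\lesssim h^{k+1}$, which holds for every $i\leqslant N_u$ and in particular for the corrected indices $i\leqslant K$. Because the modified approximations are stable, $\|s_i-\sihm\|_0$ is bounded uniformly in $h$ (indeed it tends to zero), so each of the finitely many summands is $\mathcal{O}(h^{k+1})$ and $\|\mathrm{(I)}\|_0\lesssim h^{k+1}$. For the smooth remainder $\mathrm{(III)}$ I would first note that $U\in H^{k+1}_{-\ta}(\Omega)$ with $-\ta<0$ embeds into the unweighted space $H^{k+1}(\Omega)$, so that the full interpolation order is available. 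The only mechanism that could spoil an $\mathcal{O}(h^{k+1})$ bound in the \emph{standard} $L^2$-norm is the corner singularity of the dual problem entering an Aubin--Nitsche argument, and this is exactly the pollution that the modification $c_h(\cdot,\cdot)$ is designed to remove. I would therefore repeat the duality argument used for \cref{mainthm} in \cref{S:proof}, but track the unweighted norm: invoking the modified Galerkin orthogonality \cref{GO}, the arising correction terms are controlled by the $g_h$-conditions \cref{assmthm} and the \cref{U}-assumption, which yields $\|U-\Uhm\|_0\lesssim h^{k+1}$.

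The main obstacle is $\mathrm{(II)}$, the uncorrected higher singular modes $K<i\leqslant N_u$. By the definition of $K$ in \cref{Kdef} these indices are precisely those with $2\lambda_i\geqslant k+1$, so the self-pollution term $g_h(s_i)$ is no longer essential and, by the argument underlying \cref{mainthm}, is automatically $\mathcal{O}(h^{k+1})$; the aim is to convert this absence of essential pollution into the sharp unweighted bound $\|s_i-\sihm\|_0\lesssim h^{k+1}$ for each such $i$, after which $\|\mathrm{(II)}\|_0\lesssim h^{k+1}$ follows since the $\mu^u_i$ are fixed constants. Establishing this sharp standard-$L^2$ rate for the modified approximation of a single higher singular function is the delicate point: it calls for an Aubin--Nitsche estimate in which both the best-approximation contribution and the correction contribution are shown to be of order $h^{k+1}$, and it must also cover the borderline resonance $2\lambda_i=k+1$ (the integer branch of \cref{Kdef}, where $s_{K+1}$ is deliberately left uncorrected). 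I expect the careful bookkeeping of the $\eps$ in $\lambda_i$ against $k+1$, and the verification that no regularity-limited contribution of order below $h^{k+1}$ survives in this group, to be the hardest part of the proof; once this is secured, adding the bounds for $\mathrm{(I)}$, $\mathrm{(II)}$ and $\mathrm{(III)}$ gives $\|u-\uhp\|_0\lesssim h^{k+1}$.
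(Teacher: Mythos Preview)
Your decomposition into $\mathrm{(I)}$, $\mathrm{(II)}$, $\mathrm{(III)}$ is exactly the skeleton the paper has in mind; its one-line proof just cites \cref{errsplit}, the triangle inequality, and \cref{convmu}. Term~$\mathrm{(I)}$ you handle precisely as intended.

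For $\mathrm{(III)}$ you work harder than necessary. You do not need a fresh duality argument in the unweighted norm: \cref{l:srem} already gives $\|U-\Uhm\|_{0,-\ta}\lesssim h^{k+1}$, and since $\ta>0$ on the bounded domain one has $\|v\|_0\lesssim\|v\|_{0,-\ta}$, so $\|U-\Uhm\|_0\lesssim h^{k+1}$ follows immediately. (Your embedding $H^{k+1}_{-\ta}\hookrightarrow H^{k+1}$ is correct, but it is the $L^2$ comparison that closes the argument, not a repetition of the pollution analysis.)

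For $\mathrm{(II)}$ you are right that the paper's terse sketch does not make this step explicit, and the crude bound from \cref{l:standest} (which yields only order $\lambda_i+\lambda_1$) is not enough. Your plan---run an Aubin--Nitsche argument for $s_i$ with $i>K$, using that $2\lambda_i\geqslant k+1$ makes the self-pollution term $g_h(s_i)$ automatically $\mathcal{O}(h^{k+1})$ via \cref{l:aprconv1}, and that the mixed terms are controlled by \cref{l:aprconv2} under the \cref{U}-assumption---is the right mechanism and would complete the proof. The borderline case $2\lambda_{K+1}=k+1$ you flag is indeed the only place where the inequality in \cref{l:aprconv1} is not strict, but the lemma still yields exactly $h^{k+1}$ there, so no $\eps$-loss occurs. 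In short: same route as the paper, but you correctly fill in a step the paper leaves implicit, while overcomplicating another that is already available off the shelf.
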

Such is a direct consequence of \cref{errsplit} for $u$, the triangle inequality and \cref{convmu}.

%!TEX root = main_ho.tex
\section{Some preliminary results}\label{S:prelim}

The proof of the main theorem strongly relies on the decomposition property \cref{sumdec}. Thus, before we turn our attention to the proof itself, we state here the approximation properties of the smooth reminder $U$ from the expansion \cref{sumdec} with indices from \cref{indices}, i.e., for $U\in H^{k+1}_{-\ta}(\Omega)$.

Note also, that these properties are derived by standard techniques, see for example \cite{Egger2014,John2015} concerning energy-corrections, nevertheless, for the sake of completeness and the readability of the later sections, we include them here. The generalized interpolation errors, as well as approximation estimates of the modified scheme in standard norms, are included in the Appendix. 

\begin{lemma}\label{l:srem}
	Let $U\in H^{k+1}_{-\ta}(\Omega) \cap H^1_0(\Omega)$ with $k\geqslant 1$ and $1-\lambda_1<\ta<1$. Additionally, let \cref{A} holds. Then, for $l=\{0,1\}$, we have the following approximation estimates for the smooth remainder:
	\begin{align}\label{srem}
		\quad\|\nabla^l (U-\Uh)\|_{0,-\ta} \lesssim h^{k+1-l} \,\|U\|_{k+1,-\ta},
	\end{align}
	and
	\begin{align}\label{srem2}
		\|\nabla^l (U-\Uhm)\|_{0,-\ta} \lesssim h^{k+1-l}\, \|U\|_{k+1,-\ta}.
	\end{align}
\end{lemma}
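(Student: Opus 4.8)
The first thing I would observe is that the hypothesis $U\in H^{k+1}_{-\ta}(\Omega)$ with $\ta<1$ is much stronger than it looks: every weight $r^{-\ta-(k+1)+|\mu|}$ entering $\|U\|_{k+1,-\ta}$ has a strictly negative exponent, and since $r$ is bounded on $\Omega$ each such weight is bounded below. Hence $\|U\|_{k+1}\lesssim\|U\|_{k+1,-\ta}$, so that $U$ is an honest, non-singular $H^{k+1}$-function which moreover vanishes to high order at the re-entrant corner. The only genuine issues are therefore (i) whether the corner pollutes its Galerkin error and (ii) the insertion of the weight $r^{-\ta}$ on the left. The plan is to prove the unweighted optimal estimates first, exhibiting the absence of pollution, and then to upgrade to the weighted norms by a dyadic/local-energy argument, reducing everything to the weighted interpolation estimates of the Appendix.

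For \cref{srem} with $l=1$ I would simply combine C\'ea's lemma with the weighted interpolation estimate to get $\|\nabla(U-\Uh)\|_0\lesssim h^{k}\|U\|_{k+1,-\ta}$. The crucial point is the $l=0$ estimate and the question of pollution. Here I would run Aubin--Nitsche for the unweighted $L^2$-norm, $\|U-\Uh\|_0=\sup_{\|g\|_0\le1}(U-\Uh,g)$: solving $-\Delta\phi=g$ with $\phi\in H^1_0(\Omega)$ and writing $\phi=\sum_i\mu_i^\phi s_i+\tilde\phi$ by \cref{l:decomp}, the smooth part $\tilde\phi\in H^2$ contributes the optimal $h^{k+1}$ through Galerkin orthogonality. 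For each singular part I would not subtract an interpolant but integrate by parts directly: since $U-\Uh\in H^1_0(\Omega)$ and $\Delta s_i=0$ on $\Omega\cap\mathcal B_b^2(0)$, one obtains $a(U-\Uh,s_i)=-\int(U-\Uh)\Delta s_i$, an integral supported in the annulus $\{b\le r\le a\}$ away from the corner. On that annulus an interior $L^2$-estimate gives $\|U-\Uh\|_0\lesssim h^{k+1}$ locally, so the singular contributions are optimal too and no pollution appears. This is the heart of the matter and reflects that $s_i$ is harmonic near the corner.

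To pass from the unweighted to the weighted estimates I would decompose $\Omega$ into dyadic annuli $A_j\sim\{2^{-j-1}\le r\le 2^{-j}\}$, on which $r^{-\ta}$ is comparable to the constant $2^{j\ta}$, and apply local (Nitsche--Schatz) energy estimates on each $A_j$. Summing the local bounds against $r^{-2\ta}$ reproduces the weighted interpolation error plus lower-order global tails controlled by the unweighted estimates of the previous step; the strong vanishing of $U$ at the corner guarantees convergence of the dyadic sum near $r=0$, and away from the corner $r^{-\ta}$ is bounded so standard theory applies. The weighted $l=0$ bound then follows either by the same dyadic summation of local $L^2$-estimates or by repeating the duality argument with the weight carried through, once more using that the dual singular parts localize to $\{b\le r\le a\}$.

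For the modified approximation \cref{srem2} the only structural change is that the exact Galerkin orthogonality is replaced by its modified form, cf. \cref{GO}, so each occurrence of $a(U-\Uhm,v_h)$ now carries an extra term $c_h(\Uhm,v_h)$. By \cref{A} this term is bounded by $\|\Uhm\|_{1,\Sh}\|v_h\|_{1,\Sh}$ and is supported on the $\mathcal O(h)$-surrounding $\Sh$, whose area is $\mathcal O(h^2)$ and on which $U$ is correspondingly small; combined with coercivity of $a_h$ from \cref{A} (which furnishes stability of $\Uhm$) and an inverse estimate on $\Sh$, its contribution is of higher order and can be absorbed. I expect the main obstacle to be exactly this interplay near the corner: verifying that the weighted local-energy summation closes without losing powers of $h$, and that the localized $c_h$-term never degrades the optimal order, since the favourable cancellation used in \cref{srem} is no longer automatic and must be recovered from the support and boundedness properties of $c_h$.
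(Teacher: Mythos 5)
Your two preliminary observations are correct and genuinely useful: since $-\ta<0$, every weight entering $\|U\|_{k+1,-\ta}$ is bounded below on the bounded domain $\Omega$, so $U$ is an honest $H^{k+1}$-function, and the $l=1$ unweighted bound does follow from C\'ea plus \cref{interr}; likewise, writing $a(U-\Uh,s_i)=(U-\Uh,-\Delta s_i)$ and using that $\Delta s_i$ is supported in the annulus $\{b\leqslant r\leqslant a\}$ is a legitimate way to localize the dual singular contribution. The gap is that the two steps you then declare to be ``local'' are not: they each hide a globally supported error term, and as structured your argument is circular. Every interior estimate of Nitsche--Schatz/Wahlbin type carries such a term --- compare \cref{whlb1}, where the local $H^1$-error on $A$ is paid for by $\|u-\uhm\|_{0,B}$ on a larger set; for an interior $L^2$-estimate the price is a global negative norm of $U-\Uh$. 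Bounding that global weak norm optimally on a non-convex polygon again requires a duality argument whose dual solution contains $s_1$, and localizing that contribution by harmonicity brings you back to an annulus $L^2$-error of $U-\Uh$: you end up estimating the quantity by itself. The dyadic upgrade of the gradient bound fails for the same reason, now in quantitative form: the scaled local energy estimate on the annulus $A_j$ (diameter $d_j\sim 2^{-j}$) reads $\|\nabla(U-\Uh)\|_{0,A_j}\lesssim\|\nabla(U-\Ihk U)\|_{0,A_j'}+d_j^{-1}\|U-\Uh\|_{0,A_j'}$, so multiplying by $r^{-\ta}\sim 2^{j\ta}$ and summing produces the tail $\|r^{-\ta-1}(U-\Uh)\|_0$, a weighted $L^2$-error with a \emph{stronger} blow-up weight --- not a lower-order term controlled by the unweighted estimates. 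Feeding in the unweighted bound $h^{k+1}$ costs $2^{J(1+\ta)}\sim h^{-1-\ta}$ on the innermost annuli and yields only $h^{k-\ta}$, while estimating the tail by duality needs yet stronger weighted gradient bounds, so the recursion escalates instead of closing. This is precisely why the paper's own dyadic Wahlbin estimate \cref{whlb} is restricted to $\beta\geqslant 1$, i.e.\ to weights vanishing at the corner.

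The missing ingredient is the one the paper imports to break this circle: the mesh-dependent-norm estimate \cref{meshdepw} of Blum (1988), with the regularized distance $\varrho=(r^2+\theta h^2)^{1/2}$, which bounds $\|\varrho^{-\ta}\nabla(U-\Uh)\|_0$ purely by interpolation quantities, with no global error term on the right. Once the weighted gradient bound in \cref{srem} is available, the weighted duality --- the dual problem \cref{adjpr1} with right-hand side $r^{-2\ta}(U-\Uh)$, the weighted regularity $\|v\|_{2,\ta}\lesssim\|U-\Uh\|_{0,-\ta}$, weighted H\"older, and weighted interpolation of $v$ --- closes the $L^2_{-\ta}$ bound; this is exactly the paper's proof, and it is also the correct realization of your phrase ``repeating the duality argument with the weight carried through.'' Your sketch for \cref{srem2} (modified Galerkin orthogonality plus scaling of the $c_h$-terms supported on $\Sh$) is directionally the same as the paper's second step, but it cannot be completed until the standard-scheme estimates are actually in hand.
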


\begin{proof}
	Let us prove  the estimates for the standard Galerkin approximation \cref{srem} first, and then, we will focus on the estimates for the modified scheme \cref{srem2}.\\
	{\it Standard approximation error}

	In what follows, we shall strongly rely on the approximation error properties in the mesh-dependent norms, as derived for the linear approximation in \cite{Blum1988}. More precisely, let us define $\varrho \coloneqq (r^2+\theta h^2)^{1/2}$ for some sufficiently large but fixed $\theta>0$. Then, analogously, for any $1-\lambda_1<\ta<1$ there holds:
	\begin{align}\label{meshdepw}
		\|\varrho^{-\ta}\nabla(U-\Uh)\|_0 
		&\lesssim \|\varrho^{-\ta}\nabla(U-\Ihk U)\|_0 + h^{-1} \|\varrho^{-\ta}(U-\Ihk U)\|_0.
	\end{align}

	First, we derive the estimates for the gradient. By the triangle inequality, the equivalence of the norms $\|\cdot\|_{-\ta}$ and $\|\varrho^{-\ta}\cdot\|_{0}$ on finite element spaces, using again triangle inequality, \cref{meshdepw}, $\varrho^{-\ta}<r^{-\ta}$, and the interpolation estimates \cref{interr}, we have that
	\begin{align*}
		\|\nabla(U&-\Uh)\|_{0,-\ta}\lesssim \|\nabla(U-\Ihk U)\|_{0,-\ta} + \|\varrho^{-\ta}\nabla(\Uh-\Ihk U)\|_{0}\\
		&\lesssim \|\nabla(U-\Ihk U)\|_{0,-\ta} + \|\varrho^{-\ta} \nabla(U-\Ihk U)\|_0 + h^{-1} \|\varrho^{-\ta}(U-\Ihk U)\|_0\\
		&\lesssim \|\nabla(U-\Ihk U)\|_{0,-\ta} + h^{-1} \|U-\Ihk U\|_{0,-\ta}
		\lesssim h^{k} \|U\|_{k+1,-\ta}.
	\end{align*}
	To derive the $L^2_{-\ta}-$bound, we consider the following adjoint problem
	\begin{align}\label{adjpr1}
		-\Delta v = r^{-2\ta} (U-\Uh) \quad \text{in }\Omega, \qquad \text{and} \qquad v = 0 \quad \text{in }\partial \Omega.
	\end{align}
	The right hand side is in $L^2_{\ta}(\Omega)$ since, using \cref{embd2}, $U \in L^{2}_{-\ta}(\Omega)$ and also $\Uh\in L^2_{-\ta}$ for $-\ta>-1$. Thus, the solution of the dual problem admits the regularity result from \cref{l:decomp}: 
	\begin{align}\label{dp1}
		\|v\|_{2,\ta} \lesssim \|r^{-2 \ta}(U-\Uh)\|_{0,\ta}=\|U-\Uhm\|_{0,-\ta}.
	\end{align}
	This yields for the error, together with the integration by parts, standard Galerkin orthogonality, the H\"older inequality, the result for the gradients and the interpolation estimates from \cref{interr}:
	\begin{align*}
		\|U-\Uh\|_{0,-\ta}^2 &= (U-\Uh,r^{-2\ta} (U-\Uh)) = (U-\Uh,-\Delta v)
		=a(U-\Uh,v) \\ &= a(U-\Uh,v-I_h^1 v) \leqslant \|\nabla(U-\Uh)\|_{0,-\ta} \|\nabla(v-I_h^1 v)\|_{0,\ta}\\
		&\lesssim h^{k} \|U\|_{k+1,-\ta}\, h \|v\|_{2,\ta} \lesssim h^{k+1} \|U\|_{k+1,-\ta} \|U-\Uh\|_{0,-\ta}.
	\end{align*}
	Dividing by $\|U-\Uh\|_{0,\ta}$ we can then determine the final $L^2_{-\ta}-$estimate. Let us now turn our attention to the estimates of \cref{srem2} for the modified Galerkin scheme.\\
	{\it Step 2: Modified approximation error}

	The $L^2_{-\ta}$ estimate can be proven by a similar duality argument as above, namely for \cref{adjpr1} with right hand-side $r^{-2\ta} (U-\Uhm)$. Analogously, we obtain for the solution $v$ that 
	\begin{align}\label{dp}
		\|v\|_{2,\ta} \lesssim \|U-\Uhm\|_{0, -\ta}.
	\end{align}
	We denote by $\vhm\in\Vhk$ the $k-$th order modified Galerkin approximation of $v$. Hence, using \cref{GO} for $\vhm$, $\Uhm$, and $\Uh$, the triangle and Cauchy--Schwarz inequality, we derive 
	\begin{align*}
		\|U-\Uhm\|_{0,-\ta}^2 &= (U-\Uhm,r^{-2\ta}(U-\Uhm))=(U-\Uhm,-\Delta v)=a(U-\Uhm,v)\\
		&=a(U-\Uhm,v-\vhm)-c_h(\Uhm,\vhm)=a(U-\Uh,v)-c_h(\Uh,\vhm)\\
		&\lesssim \|U-\Uh\|_{0,-\ta} \|\Delta v \|_{0,\ta} + \|\nabla \Uh\|_{L^2(\Sh)}\|\nabla \vhm\|_{L^2(\Sh)}.
	\end{align*}
	The first term can be estimated by the help of the standard approximation estimate from \cref{srem}, $\|\Delta v \|_{0,\ta}\lesssim \|v\|_{2,\ta}$ and \cref{dp}, thus
	\begin{align*}
		\|U-\Uh\|_{0,-\ta}\|\Delta v \|_{0,\ta} \lesssim h^{k+1}\|U\|_{k+1,-\ta} \|U-\Uhm\|_{0,-\ta}. 
	\end{align*}
In the rest of the proof, we use results from the Appendix.
	For the $\Sh-$supported terms we use the triangle inequality, similar scaling arguments as in \cref{Shest}, and again the first part of \cref{srem}:
	\begin{align*}
		\|\nabla\Uh\|_{L^2(\Sh)} &\leqslant \|\nabla(U-\Uh)\|_{L^2(\Sh)} + \|\nabla U\|_{L^2(\Sh)}\\
		&\lesssim h^{\ta} \left(\|\nabla(U-\Uh)\|_{L^2_{-\ta}(\Sh)} + h^k \|U\|_{k+1,-\ta}\right)
		\lesssim h^{\ta+k} \|U\|_{k+1,-\ta},
	\end{align*}
	and similarly, by \cref{standest2} together with the regularity result \cref{dp} for $v$ as the solution of the Poisson problem:
	\begin{align*}
		\|\nabla \vhm\|_{L^2(\Sh)} &\lesssim \|v-\vhm\|_{H^1(\Sh)}+\|\nabla v\|_{L^2(\Sh)}
		\lesssim h^{1-\ta}\|v\|_{2,\ta} \lesssim h^{1-\ta} \|U-\Uhm\|_{0,-\ta}.
	\end{align*}
	Summing all the estimated terms above and dividing the sum by $\|U-\Uhm\|_{0,\ta}$, we obtain the $L^2_{-\ta}-$approximation error. It remains to show the bound for the gradient:
	\begin{align*}
		\|\nabla(U&-\Uhm)\|_{0, -\ta}\leqslant \|\nabla(U-\Ihk U)\|_{0,-\ta}+\|\nabla(\Ihk U-\Uhm)\|_{0,-\ta}\\
		&\lesssim h^k\|U\|_{k+1,-\ta} + h^{-1} (\|\Ihk U-U\|_{0,-\ta}+\|U-\Uhm\|_{0,-\ta})\lesssim h^k\|U\|_{k+1,-\ta},
	\end{align*}
	where we used the triangle inequality, the interpolation estimates (\cref{l:intest}), the generalized inverse inequality (\cref{l:inv}), and the above derived $L^2_{-\ta}-$estimate \cref{srem}.
\end{proof}

%!TEX root = main_ho.tex
\section{Proof of the main theorem}\label{S:proof}

Before we start, let us define a restricted interpolation $\Iwhk\in\Vhk$ such that for each $v\in\mathcal{C}(\overline{\Omega})$ holds: $\Iwhk v =0$ on $\overline{\Sh}$ and $\Iwhk v (\mathbf{x}) = v(\mathbf{x})$ for all interpolation nodes $\mathbf{x}\not\in\overline{\Sh}$. Then, for each $\wh\in\Vhk$ it holds: $c_h(\Iwhk v, \wh)=0$. Moreover, $\Iwhk v \neq \Ihk v $ only on a $\mathcal{O}(h)$-surrounding of the singular point. Its approximation properties are then stated in the Appendix, see \cref{Iwhk}.

One of the key ingredients of the proof below will be generalized Wahlbin results. Namely, we will use a standard energy estimate:
\begin{align}\label{whlb1}
	\|\nabla(u-\uhm)\|_{0,A} \lesssim \|\nabla(u - \Ihk u)\|_{0,B} + \|u-\uhm\|_{0,B},
\end{align}
for $A\subset B$ such that $\mathbf{x}_s\not\in \overline{B}$ and $\text{dist}(\partial A\setminus\partial \Omega, \partial B\setminus\partial \Omega)>0$, see \cite[Chapter III., Theorem 9.1]{Wahlbin1991} or \cite{Nitsche1974}. 
Additionally, we will use a generalized result in the weighted norms: 
\begin{align}\label{whlb}
	\|r^{\beta}\nabla(u-\uhm)\|_{0} &\lesssim \|r^{\beta}\nabla(u-\Iwhk u)\|_{0} + \|r^{\beta-1}(u-\uhm)\|_{0},
\end{align}
for $\beta\geqslant1$. This estimate can be obtained from the local energy estimates provided by \cite[Chapter III., Corollary 9.1.]{Wahlbin1991} for standard norms on subdomains. As it is done in \cite{Melenk2012}, a decomposition of $\Omega$ into overlapping dyadic balls then gives the estimate in weighted norms for standard conforming finite elements. However, the result holds also true for the modified discrete solution since the modified Galerkin orthogonality \cref{GO} for $\Iwhk u$ reduces to the standard one. 

As we have already mentioned, we shall base the proof of \cref{mainthm} on the decomposition in \cref{sumdec}. In contrast to \cite{Egger2014} which is restricted to  linear finite elements, we have to deal also with the pollution in the $H^1$-norm. 
To obtain the required $k+1$ power of $h$ in \cref{woc}, we carry out the proof recursively. In the first step, see \cref{weiconv}, we use stronger assumptions as they are formulated in the main theorem.  In the rest of the section, these assumptions will be weakened.

\begin{lemma}\label{weiconv}
	Assume that for some $\sigma\leqslant k$ it holds:
	\begin{align}\label{assums}
		a(s_i-\sihm,s_j-\sjhm) - c_h(\sihm,\sjhm) = \mathcal{O}(h^{\sigma+1}),
	\end{align}
	$i,j\leqslant N_u=N_z$, for $N_u$ as defined in \cref{indices}. Then, for $f \in H^{k-1}_{-\ta}(\Omega)$ and  $1-\lambda_1 < \ta < 1$, we have
	\begin{align}\label{errbd}
		\|u-\uhm\|_{0,\alpha} \lesssim h^{\sigma+1} \|f\|_{k-1,-\ta} \quad \mbox{and} \quad \|\nabla(u-\uhm)\|_{0,\alpha}\lesssim h^\sigma \|f\|_{k-1,-\ta}.
	\end{align}
\end{lemma}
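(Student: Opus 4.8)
The statement to prove is Lemma~\ref{weiconv}: a weighted $L^2$ and $H^1$ error estimate of order $h^{\sigma+1}$ (resp.\ $h^\sigma$) for the modified approximation $\uhm$, under the hypothesis \eqref{assums} that each mixed pollution term $\tilde g_h(s_i,s_j)=a(s_i-\sihm,s_j-\sjhm)-c_h(\sihm,\sjhm)$ is $\mathcal O(h^{\sigma+1})$. The natural route is a duality (Aubin--Nitsche) argument carried out in the weighted norm $\|\cdot\|_{0,\alpha}$ with $\alpha=\ta+k-1$, combined with the error splitting \eqref{errsplit} that isolates the singular contributions from the smooth remainder.

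\emph{First step: reduce to the singular parts via the splitting.} I would start from $u-\uhm=\sum_{i=1}^{N_u}\mu_i^u(s_i-\sihm)+(U-\Uhm)$. The smooth remainder $U\in H^{k+1}_{-\ta}(\Omega)$ is already controlled optimally by Lemma~\ref{l:srem}, giving $\|U-\Uhm\|_{0,-\ta}\lesssim h^{k+1}\|U\|_{k+1,-\ta}$ and the corresponding $H^1$-bound; since $\sigma\le k$ these are at least of the required order, and the $a$~priori bound \eqref{aest} converts $\|U\|_{k+1,-\ta}$ into $\|f\|_{k-1,-\ta}$. So the whole difficulty concentrates on the singular sum, and it suffices to estimate $\|s_i-\sihm\|_{0,\alpha}$ and $\|\nabla(s_i-\sihm)\|_{0,\alpha}$ for each fixed $i\le N_u$.

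\emph{Second step: the weighted duality argument.} To bound $\|s_i-\sihm\|_{0,\alpha}$ I would set up the adjoint problem $-\Delta z = r^{-2\alpha}(s_i-\sihm)$ with homogeneous Dirichlet data. Its right-hand side lies in a weighted $L^2$-space, so by Lemma~\ref{l:decomp} the dual solution decomposes as $z=\sum_{j}\mu_j^z s_j + Z$ with a smooth remainder $Z\in H^2_{-\alpha}$ and the a~priori estimate $\sum_j|\mu_j^z|+\|Z\|_{2,-\alpha}\lesssim \|r^{-2\alpha}(s_i-\sihm)\|_{0,\alpha}=\|s_i-\sihm\|_{0,\alpha}$. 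Testing, integrating by parts and inserting the modified Galerkin orthogonality \eqref{GO} for both $\sihm$ and the modified dual approximation $\zhm$, the bilinear pairing $a(s_i-\sihm,z)$ collapses into a sum of terms of the form $\tilde g_h(s_i,s_j)$ (plus crossings with the smooth remainders $U,Z$). The key point is that the pairing of a singular $s_i$ against the \emph{smooth} part $Z-\Zhm$ is handled by Lemma~\ref{l:srem} applied to $Z$ and the interpolation/Wahlbin machinery \eqref{whlb1}--\eqref{whlb}, while the genuinely singular-singular pairings are \emph{exactly} the quantities assumed to be $\mathcal O(h^{\sigma+1})$ in \eqref{assums}. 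Dividing through by $\|s_i-\sihm\|_{0,\alpha}$ then yields the $L^2_\alpha$-bound. The $H^1$-bound follows by the weighted Wahlbin estimate \eqref{whlb}: one controls $\|r^{\alpha}\nabla(s_i-\sihm)\|_0$ by the interpolation error $\|r^{\alpha}\nabla(s_i-\Iwhk s_i)\|_0$, which is optimal of order $h^\sigma$ away from the corner since $\Iwhk$ vanishes on $\Sh$, plus the already-established weighted $L^2$-term $\|r^{\alpha-1}(s_i-\sihm)\|_0$.

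\emph{Where the real work lies.} The main obstacle is the careful bookkeeping in the duality step: after integration by parts one has $a(s_i-\sihm,z)$, and to replace $z$ by $z-\zhm$ one must track the modification terms $c_h(\sihm,\zhm)$ and verify that every cross term either reduces to an assumed $\tilde g_h(s_i,s_j)$ or is optimally small. In particular the interaction of a singular function with the smooth remainder of the \emph{dual} expansion must be shown to decay like $h^{k+1}$, which requires combining the weighted interpolation estimates, the restricted interpolant $\Iwhk$ (so that $c_h(\Iwhk v,\cdot)=0$), and the $\Sh$-localized scaling bounds from the Appendix. A secondary subtlety is keeping the weights consistent: the singular-function regularity $s_i\in H^{k+1}_{k-\lambda_i+\eps}$ must be matched against the weight $\alpha=\ta+k-1$ so that all weighted norms appearing are finite and the embeddings \eqref{embd2} apply. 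Once these pairings are sorted, the final estimate \eqref{errbd} assembles by the triangle inequality over the finite sum $i\le N_u$ together with the stress-intensity bound $\sum_i|\mu_i^u|\lesssim\|f\|_{k-1,-\ta}$ from \eqref{aest}.
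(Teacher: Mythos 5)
There is a genuine gap in your plan, and it sits exactly where you locate ``the real work'': the pairing of $s_i-\sihm$ against the smooth remainder of the \emph{dual} expansion. You propose to handle it ``by Lemma~\ref{l:srem} applied to $Z$'', but Lemma~\ref{l:srem} requires $H^{k+1}_{-\ta}(\Omega)$ regularity of the remainder, and the dual remainder does not have it: the dual right-hand side is merely in a weighted $L^2$ space, so Lemma~\ref{l:decomp} applies with $m=0$ (cf.\ Remark~\ref{indices}) and gives only $Z\in H^{2}_{-\alpha}(\Omega)$. Consequently the cross term can only be treated as
$a(s_i-\sihm,Z-\Zhm)=a\bigl(s_i-\sihm,Z-\Iwhk Z\bigr)\leqslant \|\g(s_i-\sihm)\|_{0,\beta}\,\|\g(Z-\Iwhk Z)\|_{0,-\beta}$,
where the interpolation factor yields exactly \emph{one} power of $h$ (since $Z$ is only $H^2$), so this term decays one order faster than whatever weighted gradient bound for $s_i-\sihm$ you hold at that moment --- it can never be pushed to $h^{k+1}$ ``directly'', as you claim. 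Starting from the only a~priori available gradient bound of order $h$ (the linear theory), a single duality pass at weight $\alpha$ therefore delivers $\mathcal{O}(h^{2})$, not $\mathcal{O}(h^{\sigma+1})$. There is a second, related circularity: the weighted Wahlbin estimate \cref{whlb} bounds $\|r^{\alpha}\g(s_i-\sihm)\|_0$ by $\|r^{\alpha-1}(s_i-\sihm)\|_0$, i.e.\ by the $L^2$ error in the \emph{strictly stronger} weight $\alpha-1$, which your duality step (performed at weight $\alpha$) has not produced. So in your ordering the $L^2$ bound needs the gradient bound and the gradient bound needs an $L^2$ bound you do not have.

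Both obstructions are resolved simultaneously by the idea your proposal is missing: a bootstrap over the weights $\beta(\varkappa)=\ta+\varkappa-1$, $\varkappa=1,\dots,\sigma$, which is how the paper proceeds. The $L^2_{\beta(\varkappa-1)}$ bound of order $h^{\varkappa}$ gives, via \cref{whlb} and the interpolation estimates for $\Iwhk$, the gradient bound $\mathcal{O}(h^{\varkappa})$ in the weight $\beta(\varkappa)$; this gradient bound is precisely what enters the singular-versus-dual-remainder cross term in the duality argument at weight $\beta(\varkappa)$, yielding $h^{\varkappa}\cdot h=h^{\varkappa+1}$ for that term and hence the $L^2_{\beta(\varkappa)}$ bound of order $h^{\varkappa+1}$; the base case $\varkappa=1$ is the linear-order result of Egger et al. After $\sigma$ iterations one reaches the claimed rate. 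Your remaining ingredients (splitting into singular parts and smooth remainder, invoking \cref{assums} for the singular--singular pairings, Lemma~\ref{l:srem} for the \emph{primal} remainder $U$, assembling via \cref{aest}) are the right ones and match the paper, but without the induction the argument cannot close. A minor further slip: your dual problem should read $-\Delta z=r^{+2\alpha}(s_i-\sihm)$; with the weight $r^{-2\alpha}$ the testing step produces $\|s_i-\sihm\|_{0,-\alpha}^2$ rather than the norm you are trying to bound.
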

\begin{proof}
	We will show the required error bounds \cref{errbd} by induction with increasing the weights from $\ta$ to $\ta+\sigma-1\leqslant \alpha$. For that we define the induction variables $\varkappa=1,...,\sigma$, and $\beta(\varkappa)\coloneqq \ta+\varkappa-1=\varkappa-\lambda_1+\eps$. 
	
	{\it Initial step: $\varkappa=1$}
	The low order results from \cite{Egger2014} directly apply also for higher order finite elements, and thus we have directly $\|\g^l(u-\uhm)\|_{0,\ta}\lesssim h^{2-l}\|f\|_{0,-\ta} \lesssim h^{2-l}\|f\|_{k-1,-\ta}$, $l=0,1$.

	{\it Induction step: $2 \leqslant \varkappa\leqslant\sigma$}:	
	We show the error bounds for the general $\varkappa$-step using the result from the previous $(\varkappa\!-\!1)$-step, i.e.,
	\begin{align}\label{ia}
		 \|u-\uhm\|_{0,\beta(\varkappa-1)} \lesssim h^{\varkappa}\|f\|_{k-1,-\ta}, \quad  \|\g(u-\uhm)\|_{0,\beta(\varkappa-1)} \lesssim h^{\varkappa-1}\|f\|_{k-1,-\ta}.
	\end{align}

	By \cref{whlb}, the interpolation error \cref{interr} for $\Iwhk$ (see \cref{Iwhk}) and \cref{ia} from the previous induction step, we can directly obtain the $H^1_{\beta(\varkappa)}-$estimate:
	\begin{align}\label{gerr}
		\|\nabla(u-\uhm)\|_{0,\beta(\varkappa)} &\lesssim \|\nabla(u-\Iwhk u)\|_{0,\beta(\varkappa)} + \|u-\uhm\|_{0,\beta(\varkappa-1)}
		\lesssim h^{\varkappa}\|f\|_{k-1,-\ta}.
	\end{align}

	Let us continue with the proof of the $L^2_{\beta(\varkappa)}-$estimate. We consider the dual problem 
	\begin{align}\label{refdp}
		-\Delta z = r^{2\beta(\varkappa)} (u-\uhm)\quad \text{in}~\Omega, \qquad \text{and} \qquad z = 0 \quad \text{on}~\partial \Omega.
	\end{align}
	The right hand side $r^{2\beta(\varkappa)}(u-\uhm)$ is in $L^2_{-\beta(\varkappa)}(\Omega)$ since $u \in H^{k+1}_{\alpha}(\Omega)\subset L^2_{\beta(\varkappa)}(\Omega)$. Thus, by \cref{l:decomp} we can expand the solution $u$ of the original Poisson problem and the solution $z$ of the dual problem as
	\begin{align*}
		u = \sum \limits_{i=1}^{N_u} \mu^u_i s_i + U, \quad \text{and} \quad z =\sum \limits_{j=1}^{\mathclap{N_z(\varkappa)}} \mu^z_j s_j + Z_{\varkappa},
	\end{align*}
	with $U \in H^{k+1}_{-\ta}(\Omega)$ and $Z_{\varkappa}\in H^2_{-\beta(\varkappa)}(\Omega)$ with $N_z(\varkappa)\leqslant N_z$. Further, the following a~priori estimates hold
	\begin{align}\label{aed}
		\sum \limits_{i=1}^{N_u} |\mu^u_i| + \|U\|_{k+1,-\ta} \lesssim \|f\|_{k-1,-\ta}, \ \sum \limits_{j=1}^{\mathclap{N_z(\varkappa)}} |\mu^z_j| + \|Z_{\varkappa}\|_{2,-\beta(\varkappa)} \lesssim \|u-\uhm\|_{0,\beta(\varkappa)}.
	\end{align}
	We can then rewrite the $L^2_{\beta(\varkappa)}$-error by \cref{refdp}, integration by parts, modified Galerkin orthogonality \cref{GO}, and the expansions of $u$ and $z$ as:
	\begin{equation}\label{derr}
	\begin{aligned}
		\|u-\uhm\|_{0,\beta(\varkappa)}^2 &= (u-\uhm,r^{2\beta(\varkappa)}(u-\uhm))= a(u-\uhm,z-\zhm) - c_h(\uhm,\zhm) \\
		&= \sum_{i=1}^{N_u}\, \sum_{j=1}^{\mathclap{N_z(\varkappa)}} \mu^u_i \, \mu^z_j \big[ a(s_i-\sihm,s_j-\sjhm) - c_h(\sihm,\sjhm)\big]\\
		&\quad + \sum \limits_{j=1}^{\mathclap{N_z(\varkappa)}} \mu^z_j \big[a(U-\Uhm,s_j-\sjhm) - c_h(\Uhm,\sjhm)\big]\\
		&\quad + \sum \limits_{i=1}^{N_u} \mu^u_i \big[ a(s_i-\sihm,Z_{\varkappa}-\Zhmk) - c_h(\sihm,\Zhmk)\big]\\
		&\quad + a(U-\Uhm,Z_{\varkappa}-\Zhmk) - c_h(\Uhm,\Zhmk).
	\end{aligned}
	\end{equation}
	In the following, we consider each term of \cref{derr} individually. We can estimate the first double-sum by the assumption of the lemma and by the a~priori estimates of the expansions \cref{aed}, i.e.,
	\begin{align}\label{term1}
		\mu^u_i \mu^z_j \big[ a(s_i-\sihm,s_j-\sjhm)-c_h(\sihm,\sjhm)\big] \lesssim h^{\sigma+1}\|f\|_{k-1,-\ta} \|u-\uhm\|_{0,\beta(\varkappa)}.
	\end{align}
	For the second term, we apply the modified Galerkin orthogonality \cref{GO}, integration by parts, Cauchy Schwarz inequality, the fact that $\|\Delta s_j\|_{0,\ta}\lesssim \|s_j\|_{2,\ta}\leqslant c$ and the estimates of \cref{l:srem} as well as the regularity estimates 
	\begin{equation}\label{term2}
	\begin{aligned}
		\mu^z_j \big[a(U&-\Uhm,s_j-\sjhm)-c_h(\Uhm,\sjhm) \big]
		= \mu^z_j\, a(U-\Uhm,s_j) \\ &\leqslant |\mu^z_j|\,(U-\Uhm,-\Delta s_j)
		\leqslant |\mu^z_j|\,\|U-\Uhm\|_{0,-\ta} \|\Delta s_j\|_{0,\ta}\\
		&\lesssim h^{k+1} \|u-\uhm\|_{0, \beta(\varkappa)} \|U\|_{k+1,-\ta}
		\lesssim h^{k+1} \|f\|_{k-1,-\ta} \|u-\uhm\|_{0, \beta(\varkappa)}.
	\end{aligned}
	\end{equation}
	Note that, in general, $Z_{\varkappa}\not\in H^{k+1}_{-\ta}(\Omega)$, and thus the similar arguments can not be used for the third term. Instead, we use the $\varkappa$-gradient bound \cref{gerr} which are valid also for $s_i-\sihm$, together with the modified Galerkin orthogonality \cref{GO}, approximation properties of $\Iwhk$, and \cref{aed}:	
	\begin{equation}\label{term3}
	\begin{aligned}
		\mu^u_i \big[a(s_i&-\sihm,Z_{\varkappa}-\Zhmk)-c_h(\sihm,\Zhmk)\big] = \mu^u_i a(s_i-\sihm,Z_{\varkappa}-\Iwhk Z_{\varkappa})\\
		&\leqslant |\mu^u_i|\|\g(s_i-\sihm)\|_{0,\beta(\varkappa)}\|\g(Z_{\varkappa}-\Iwhk Z_{\varkappa})\|_{0,-\beta(\varkappa)}\\
		&\lesssim h^{\varkappa+1} \|f\|_{k-1,-\ta}\|Z_{\varkappa}\|_{2,-\beta(\varkappa)} \lesssim h^{\varkappa + 1} \|f\|_{k-1,-\ta}  \| u-\uhm\|_{0, \beta(\varkappa)}.
	\end{aligned}
	\end{equation}
	The last term in \cref{derr} is the most regular one, and thus, using the estimates for the smooth remainder from \cref{l:srem} together with the smoothness of $Z_{\varkappa}$, we obtain:
	\begin{equation}\label{term4}
	\begin{aligned}
		a(U&-\Uhm,Z_{\varkappa}-\Zhmk)-c_h(\Uhm,\Zhmk) \\
		&= a(U -\Uhm, Z_{\varkappa}) = (U-\Uhm,-\Delta Z_{\varkappa}) \leqslant \| U-\Uhm\|_{0,-\ta} \|\Delta Z_{\varkappa} \|_{0,\ta}\\
		&\leqslant h^{k+1} \|U\|_{k+1,-\ta}\|Z_{\varkappa}\|_{2,-\beta(\varkappa)} \lesssim h^{k+1} \|f\|_{k-1,-\ta} \|u-\uhm\|_{0,\beta(\varkappa)}.
	\end{aligned}
	\end{equation}
	Summing all four terms together, i.e., \cref{term1}--\cref{term4}, and dividing by $\|u-\uhm\|_{0,\beta(\varkappa)}$ we obtain the $L^2_{\beta(\varkappa)}-$error for the inductive step:
	\begin{align}\label{errkappa}
		\|u-\uhm\|_{0,\beta(\varkappa)} \lesssim h^{\varkappa + 1} \|f\|_{k-1,-\ta}.
	\end{align}
	
	The proof of \cref{weiconv} is then finished for $\varkappa = \sigma$.
\end{proof}

% !TEX root = main_ho.tex
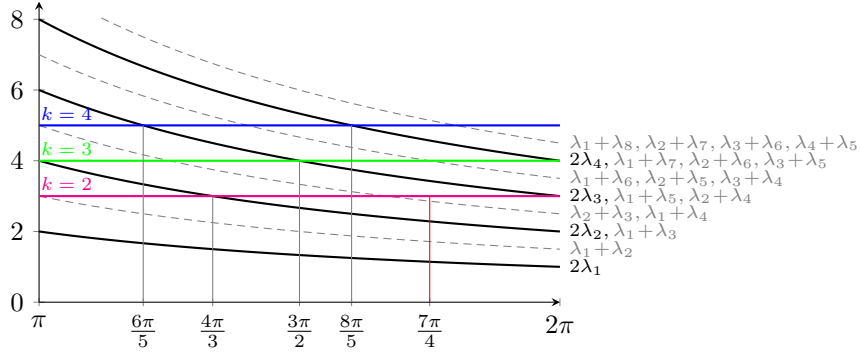
\begin{figure}
	\begin{center}
	\begin{tikzpicture}[scale=1.]
		\def\piv{3.1415926535897932385}
		\def\dpiv{6.283185307179586477}
		\def\ca{3.769911184}
		\def\cb{4.188790205}
		\def\cc{4.71238898}
		\def\cd{5.026548246}
		\begin{axis}[
			yscale=0.7,ymax=8.5,ymin=0,xmin=pi,xmax=2*pi,xtick={\piv,\ca,\cb,\cc,\cd,5.497787144,\dpiv},
			xticklabels={$\pi$,$\frac{6\pi}{5}$,$\frac{4\pi}{3}$,$\frac{3\pi}{2}$,$\frac{8\pi}{5}$,$\frac{7\pi}{4}$,$2\pi$},axis x line=bottom,axis y line=left, ylabel={},
			every axis x label/.style={at={(ticklabel* cs:1.05)},anchor= north east},clip=false
			]
		\addplot[domain=pi:2*pi,color=black,thick,samples=100] {2*pi/x};\addplot[domain=pi:2*pi,color=gray,densely dashed,samples=100] {3*pi/x};
		\addplot[domain=pi:2*pi,color=black,thick,samples=100] {4*pi/x};\addplot[domain=pi:2*pi,color=gray,densely dashed,samples=100] {5*pi/x};
		\addplot[domain=pi:2*pi,color=black,thick,samples=100] {6*pi/x};\addplot[domain=pi:2*pi,color=gray,densely dashed,samples=100] {7*pi/x};
		\addplot[domain=pi:2*pi,color=black,thick,samples=100] {8*pi/x};\addplot[domain=1.12*pi:2*pi,color=gray,densely dashed,samples=100] {9*pi/x};
		\addplot[domain=pi:2*pi,color=magenta,thick] {3};\addplot[domain=pi:2*pi,color=green,thick] {4};\addplot[domain=pi:2*pi,color=blue,thick] {5};
		\draw[thin,color=gray] (axis cs:\ca,0) -- (axis cs:\ca,5);\draw[thin,color=gray] (axis cs:\cb,0) -- (axis cs:\cb,3);
		\draw[thin,color=gray] (axis cs:\cc,0) -- (axis cs:\cc,4);\draw[thin,color=gray] (axis cs:\cd,0) -- (axis cs:\cd,5);
		\draw[thin,color=red] (axis cs:5.497787144,0) -- (axis cs:5.497787144,3);

		\node [anchor=west] at (axis cs:\dpiv,1) {\scriptsize{$2\lambda_1$}};
		\node [anchor=west,gray] at (axis cs:\dpiv,1.5) {\scriptsize{$\lambda_1\!+\!\lambda_2$}};
		\node [anchor=west] at (axis cs:\dpiv,2) {\scriptsize{$2\lambda_2, \color{gray}{\lambda_1\!+\!\lambda_3}$}};
		\node [anchor=west,gray] at (axis cs:\dpiv,2.5) {\scriptsize{$\lambda_2\!+\!\lambda_3, \lambda_1\!+\!\lambda_4 $}};
		\node [anchor=west] at (axis cs:\dpiv,3) {\scriptsize{$2\lambda_3,\color{gray}{\lambda_1\!+\!\lambda_5, \lambda_2\!+\!\lambda_4}$}};
		\node [anchor=west,gray] at (axis cs:\dpiv,3.5) {\scriptsize{$\lambda_1\!+\!\lambda_6, \lambda_2\!+\!\lambda_5,\lambda_3\!+\!\lambda_4$}};
		\node [anchor=west] at (axis cs:\dpiv,4) {\scriptsize{$2\lambda_4,\color{gray}{ \lambda_1\!+\!\lambda_7, \lambda_2\!+\!\lambda_6, \lambda_3\!+\!\lambda_5}$}};
		\node [anchor=west,gray] at (axis cs:\dpiv,4.5) {\scriptsize{$\lambda_1\!+\!\lambda_8, \lambda_2\!+\!\lambda_7,\lambda_3\!+\!\lambda_6,\lambda_4\!+\!\lambda_5$}};
		\node [anchor=south west,magenta] at (axis cs:\piv-0.045,3-0.1) {\scriptsize{$k=2$}};
		\node [anchor=south west,green] at (axis cs:\piv-0.045,4-0.1) {\scriptsize{$k=3$}};
		\node [anchor=south west,blue] at (axis cs:\piv-0.045,5-0.1) {\scriptsize{$k=4$}};
		\end{axis}
	\end{tikzpicture}
	\end{center}
	\caption{Graphical illustration of the total number of singular functions which pollution have to be controlled (bold black) with respect to the approximation order $k$, in virtue of \cref{l:aprconv2}. Dashed lines represent terms of pollution which converge a~priori fast enough if \cref{U}-property is fulfilled, as well as the combination of the singular functions as depicted in gray. The colored lines represent $k+1$ bounds.}
	\label{F:dist2}
\end{figure}

Note, for $\sigma = k$, that the assumptions \cref{assums} in \cref{weiconv} are formulated in a stronger way than those from the main theorem. Namely, we have assumed for all combinations of $i,j$ that
\begin{align*}
	a(s_i-\sihm,s_j-\sjhm) - c_h(\sihm,\sjhm) = \mathcal{O}(h^{k+1}).
\end{align*}
What we show now is that some of the terms from \cref{assums} converge automatically with the prescribed rate under the \cref{U}-assumption. Namely, as we will see below, the terms for which $i\neq j$. The terms with $i=j$ reflect the essential pollution, in our setting the only contribution to the global pollution. 

We shall carry out the proof of the statement above again in a recursive fashion. For readability, we distinguish between several cases, each is separately studied in different lemmas or sections:\\
\begin{tabular}{l l}
	& \\[-0.5em]
	\cref{l:aprconv1}: & $\lambda_i+\lambda_j\geqslant k+1$: both singular functions have sufficient regularity.\\[0.3em]
	\cref{l:aprconv2}, \ref{l:gradhigh}: & $\lambda_i+\lambda_j< k+1$ and $i\neq j$: $s_i$, $s_j$ are mutually orthogonal.\\[0.3em]
	\Cref{S:gamma}: & $\lambda_i+\lambda_j< k+1$ and $i = j$: essential pollution $g_h(s_i)$.\\[0.3em]
\end{tabular}

\begin{lemma}\label{l:aprconv1}
	For $\lambda_i+\lambda_j\geqslant k+1$ it holds:
	\begin{align}\label{aprconvstr}
		a(s_i-\sihm,s_j-\sjhm) - c_h(\sihm,\sjhm)  \lesssim h^{k+1}.
	\end{align}
\end{lemma}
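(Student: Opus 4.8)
The plan is to use the modified Galerkin orthogonality to collapse the symmetric defect into a single Galerkin error, and then to split the weight between the two factors so that the combined regularity $\lambda_i+\lambda_j\ge k+1$ suffices. Testing \cref{GO} (written for $s_i$) with $\sjhm\in\Vhk$ gives $a(s_i-\sihm,\sjhm)=-c_h(\sihm,\sjhm)$, whence
\begin{align*}
	a(s_i-\sihm,s_j-\sjhm)-c_h(\sihm,\sjhm)=a(s_i-\sihm,s_j).
\end{align*}
Since $\Iwhk s_j$ vanishes on $\overline{\Sh}$ we have $c_h(\sihm,\Iwhk s_j)=0$ and, again by \cref{GO}, $a(s_i-\sihm,\Iwhk s_j)=0$, so the defect equals $a(s_i-\sihm,s_j-\Iwhk s_j)$. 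As $\tilde{g}_h$ is symmetric I am free to interchange $i$ and $j$, and may place the modified solution on the less regular of the two singular functions.

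Next I would apply a weighted Cauchy--Schwarz inequality with a balancing exponent $\gamma\ge1$:
\begin{align*}
	|a(s_i-\sihm,s_j-\Iwhk s_j)|\le \|r^{\gamma}\g(s_i-\sihm)\|_{0}\,\|r^{-\gamma}\g(s_j-\Iwhk s_j)\|_{0}.
\end{align*}
The second factor is a pure interpolation error and, by the weighted interpolation estimates (\cref{Iwhk}), obeys $\|r^{-\gamma}\g(s_j-\Iwhk s_j)\|_{0}\lesssim h^{\min(k,\lambda_j-\gamma)}$. The first factor is handled by the weighted Wahlbin estimate \cref{whlb} together with the same interpolation bound, yielding $h^{\min(k,\lambda_i+\gamma)}$ modulo a lower--order weighted $L^2$ remainder. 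The decisive observation is that the two singularity--limited exponents satisfy $(\lambda_i+\gamma)+(\lambda_j-\gamma)=\lambda_i+\lambda_j\ge k+1$ regardless of $\gamma$: picking $\gamma$ in the admissible window so that both minima are attained by their second arguments (and switching to the full rate $h^{k}$ on whichever factor has a weight leaving the singular regime) makes the product $\mathcal{O}(h^{\lambda_i+\lambda_j})\subseteq\mathcal{O}(h^{k+1})$. The generous slack $2k\ge k+1$ is precisely what lets two individually non--optimal rates combine to the required order, and the corner cases $\lambda_i>k-1$ are dealt with by the interchange $i\leftrightarrow j$ mentioned above.

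The main obstacle is the weighted single--function estimate of $\|r^{\gamma}\g(s_i-\sihm)\|_{0}$, that is, controlling the lower--order term $\|r^{\gamma-1}(s_i-\sihm)\|_{0}$ left by \cref{whlb}. I would obtain it by the same Wahlbin--plus--duality induction on the weight used in the proof of \cref{weiconv}, estimating the dual problem associated with $s_i$ exactly as in \cref{derr}. The key point is that this is not circular here: for $\lambda_i+\lambda_j\ge k+1$ the diagonal case $i=j$ forces $2\lambda_i\ge k+1$, i.e. $i>K$, so the argument never meets an uncorrected essential pollution term $g_h(s_i)$ and closes from regularity alone, without using \cref{assmthm} or the \cref{U}-assumption. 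What remains is routine bookkeeping of the admissible weight window and of the $\eps$ in $\ta=1-\lambda_1+\eps$, which matters only in the borderline case $\lambda_i+\lambda_j=k+1$.
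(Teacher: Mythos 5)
Your opening reduction is fine: using \cref{GO} twice to rewrite the defect as $a(s_i-\sihm,s_j-\Iwhk s_j)$ is exactly the manipulation the paper itself performs inside the proof of \cref{l:aprconv2} (cf.\ \cref{tott2}, \cref{ttdec2}). The gap lies in how you estimate this term. First, in your weighted Cauchy--Schwarz step the interpolation factor is forced to carry the \emph{more} regular singular function: since $\Iwhk s_j=0$ on $\overline{\Sh}$, one has $\|r^{-\gamma}\g(s_j-\Iwhk s_j)\|_0^2\gtrsim\int_0^{ch}r^{2(\lambda_j-\gamma)-1}\,dr$, which is finite only for $\gamma<\lambda_j$; with $\gamma\geqslant 1$ the factor $\|r^{\gamma}\g(s_i-\sihm)\|_0$, and hence the remainder $\|r^{\gamma-1}(s_i-\sihm)\|_0$ left by \cref{whlb}, must therefore sit on the \emph{less} regular index. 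Your non-circularity claim then fails, because it only addresses the diagonal case $i=j$, whereas \cref{l:aprconv1} must also cover off-diagonal pairs whose smaller index is small: for $\omega=2\pi$, $k=4$ the pair $(i,j)=(1,9)$ satisfies $\lambda_1+\lambda_9=\tfrac12+\tfrac92=5=k+1$, yet $i=1\leqslant K=4$. Any duality argument of the type used in \cref{weiconv} for $\|s_1-\shm{1}\|_{0,\beta}$ expands the \emph{dual} solution into all singular functions $s_l$, $l\leqslant N_z$, so the representation \cref{derr} contains the essential pollution term $g_h(s_1)$ itself --- which without the correction \cref{assmthm} is only $\mathcal{O}(h^{2\lambda_1})=\mathcal{O}(h)$ --- together with cross terms with $\lambda_1+\lambda_l<k+1$ that require \cref{l:aprconv2} and hence the \cref{U}-assumption. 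The same objection applies even when both indices exceed $K$, since the dual expansion does not care which pair you started from; already the initial step $\varkappa=1$ of \cref{weiconv} rests on the low-order energy-correction results, which presuppose $g_h(s_1)=\mathcal{O}(h^2)$. So your argument proves at best a statement conditional on \cref{assmthm} and \cref{U}; worse, inside the paper's structure it is genuinely circular, because the hypothesis \cref{assums} of \cref{weiconv} quantifies over \emph{all} pairs, including exactly the high-regularity pairs that \cref{l:aprconv1} must supply \emph{unconditionally} for the recursion in \cref{l:gradhigh} to get started.

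The paper's own proof avoids orthogonality, duality and weights altogether, which is why the lemma can be unconditional: it expands
\begin{align*}
	c_h(\sihm,\sjhm)=c_h(s_i-\sihm,s_j-\sjhm)+c_h(s_i,s_j)-c_h(s_i,s_j-\sjhm)-c_h(s_j,s_i-\sihm),
\end{align*}
applies Cauchy--Schwarz to each resulting term (globally, respectively on $\Sh$ using \cref{A}), and uses only two unconditional facts: the a~priori bound $\|\g(s_i-\sihm)\|_0\lesssim h^{\lambda_i}$ from \cref{l:standest}, and $\|\g s_i\|_{L^2(\Sh)}\lesssim h^{\lambda_i}$ by direct integration of $|\g s_i|\sim r^{\lambda_i-1}$ over the $\mathcal{O}(h)$-neighbourhood $\Sh$. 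Every term is then $\lesssim h^{\lambda_i}h^{\lambda_j}=h^{\lambda_i+\lambda_j}\lesssim h^{k+1}$. If you wish to keep your Galerkin-orthogonality reduction, the correct repair is to estimate $a(s_i-\sihm,s_j-\Iwhk s_j)$ with the same two ingredients --- an \emph{unweighted} Cauchy--Schwarz inequality together with $\|\g(s_j-\Iwhk s_j)\|_0\lesssim h^{\lambda_j}$ from \cref{estinttilde2} --- rather than by a weighted splitting that smuggles in the full weighted-error machinery of \cref{S:proof}.
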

\begin{proof}
	If we rewrite the $c_h(\cdot,\cdot)$ term by
	\begin{align*}
		c_h(\sihm,\sjhm)=c_h(s_i-\sihm,s_j-\sjhm) + c_h(s_i,s_j) - c_h(s_i,s_j-\sjhm) - c_h(s_j,s_i-\sihm),
	\end{align*}
	we can use the Cauchy--Schwarz inequality, direct integration of $s_i$, $s_j$ on $\Sh$ and \cref{l:standest}, and thus we obtain:
	\begin{align*}
		|&a(s_i-\sihm,s_j-\sjhm) - c_h(\sihm,\sjhm)|\\
		&\lesssim \|\g(s_i-\sihm)\|_0\|\g(s_j-\sjhm)\|_0 + \|\g s_i\|_{L^2(\Sh)}\|\g s_j\|_{L^2(\Sh)}\\
		&\quad+\|\g(s_i-\sihm)\|_0\|\g s_j\|_{L^2(\Sh)}+\|\g(s_j-\sjhm)\|_0\|\g s_i\|_{L^2(\Sh)}
		\lesssim h^{\lambda_i+\lambda_j}\lesssim h^{k+1}.
	\end{align*}
	Note also that generally for $\lambda_i+\lambda_j \geqslant \sigma+1$, the convergence order in \cref{aprconvstr} is $\sigma+1$ at least.
\end{proof}

Now, let us take a closer look on the case when \cref{assums} contains a combination of different singular functions, i.e., $i\neq j$.

\begin{lemma}\label{l:aprconv2}
	Let $\lambda_i+\lambda_j< k+1$, the \cref{U}-assumption \cref{assint} holds, and assume also that for some $\sigma\leqslant k$ and $i <(\sigma+1)\frac{\omega}{2\pi}$ it is valid:
	\begin{align}\label{assopt}
		g_h(s_i) = \mathcal{O}(h^{\sigma+1}) \qquad \text{and} \qquad \|s_i-\sihm\|_{0,\Oc{1}} \lesssim h^\sigma.
	\end{align}
 Then
	\begin{align}\label{aprconvstr2}
		a(s_i-\sihm,s_j-\sjhm) - c_h(\sihm,\sjhm)  \lesssim h^{\sigma+1}, \qquad i\neq j.
	\end{align}
\end{lemma}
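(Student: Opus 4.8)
The plan is to rewrite the mixed expression in a way that isolates the one piece carrying the good rate and shows every other piece is at least $\mathcal{O}(h^{\sigma+1})$. As in the proof of \cref{l:aprconv1}, I would first expand
\[
	c_h(\sihm,\sjhm)=c_h(s_i-\sihm,s_j-\sjhm)+c_h(s_i,s_j)-c_h(s_i,s_j-\sjhm)-c_h(s_j,s_i-\sihm),
\]
so that the quantity to estimate becomes $a(s_i-\sihm,s_j-\sjhm)-c_h(s_i-\sihm,s_j-\sjhm)$ plus the three remaining $c_h$-terms. Since $c_h$ is supported only on $\Sh$ and bounded by $\|\cdot\|_{1,\Sh}\|\cdot\|_{1,\Sh}$ by \cref{A}, all $c_h$-terms reduce to products of $\Sh$-localized $H^1$-norms of $s_i-\sihm$, $s_j-\sjhm$, $s_i$, $s_j$, which by direct integration of $s_i,s_j$ over the $\mathcal{O}(h)$-ball and the standard estimates of \cref{l:standest} each scale like $h^{\lambda_i+\lambda_j}$ or better. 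So the genuinely new work is controlling the bulk term $a(s_i-\sihm,s_j-\sjhm)$.

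The idea for the bulk term is to exploit the orthogonality $(\nabla s_i,\nabla s_j)=0$ afforded by the \cref{U}-assumption \cref{assint}. I would split the integration domain of $a(\cdot,\cdot)=\int_\Omega\nabla(\cdot)\cdot\nabla(\cdot)$ into the local region $\Sh$ (or $\Upsilon$) where \cref{assint} applies and its complement $\Oc{1}$. On the complement, $s_i,s_j$ are smooth so the corresponding approximation errors are small; I expect to pair the $\sigma$-rate gradient bound on $s_i-\sihm$ coming from the reiteration scheme (the analogue of \cref{gerr} together with hypothesis \cref{assopt}) against a regularity-governed bound on $s_j-\sjhm$, yielding the target power. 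On the local region, the crucial move is to write, using \cref{assint} and the fact that $\Ihk$ interpolates exactly on $\Upsilon$,
\[
	a(s_i-\sihm,s_j-\sjhm)=a(s_i-\sihm,s_j)-a(s_i-\sihm,\sjhm),
\]
and then insert the orthogonality $\int_\Upsilon\nabla s_j\cdot\nabla s_i=\int_\Upsilon\nabla s_j\cdot\nabla\Ihk s_i=0$ to cancel the leading contribution, so that what remains is again a product of two small factors rather than one large and one bounded factor. This is the step where the hypothesis $i<(\sigma+1)\frac{\omega}{2\pi}$, i.e. $2\lambda_i<\sigma+1$, is needed: it guarantees that $g_h(s_i)=\tilde g_h(s_i,s_i)=\mathcal{O}(h^{\sigma+1})$ and the weighted bound $\|s_i-\sihm\|_{0,\Oc{1}}\lesssim h^\sigma$ are available for the very function $s_i$ appearing here.

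The main obstacle I anticipate is the bookkeeping of weights and localization when combining the Wahlbin-type weighted estimate \cref{whlb} with the orthogonality relation: \cref{assint} only holds on the coarse-mesh surrounding $\Upsilon$, whereas the gradient bounds and the $c_h$-support live on $\Sh$, and the smooth-far-field estimates use the complement $\Oc{1}$. Matching these regions and verifying that the cross terms produced after the orthogonality cancellation genuinely carry the combined rate $h^{\lambda_i+\lambda_j}\gtrsim h^{\sigma+1}$, rather than only the weaker single-function rate, is the delicate part; I would handle it by invoking the reiteration hypothesis \cref{assopt} for $s_i$ and the a~priori smoothness of $s_j$ (through $s_j\in H^{k+1}_{k-\lambda_j+\eps}$) so that each surviving factor contributes its own power of $h$, and then appeal to $\lambda_i+\lambda_j<k+1$ only to confirm we are in the regime where \cref{assint} is assumed to hold.
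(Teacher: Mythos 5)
Your very first step is where the proof breaks down, and it cannot be repaired within your framework. Splitting
\begin{align*}
	c_h(\sihm,\sjhm)=c_h(s_i-\sihm,s_j-\sjhm)+c_h(s_i,s_j)-c_h(s_i,s_j-\sjhm)-c_h(s_j,s_i-\sihm)
\end{align*}
and estimating the pieces separately by Cauchy--Schwarz yields, exactly as in \cref{l:aprconv1}, a bound of order $h^{\lambda_i+\lambda_j}$. That suffices in the regime of \cref{l:aprconv1}, namely $\lambda_i+\lambda_j\geqslant k+1$, but the present lemma is precisely about the complementary regime $\lambda_i+\lambda_j<k+1$, where $h^{\lambda_i+\lambda_j}$ is strictly weaker than the target $h^{\sigma+1}$: for instance $\omega=\tfrac32\pi$, $i=1$, $j=2$, $\sigma=k=2$ gives $\lambda_1+\lambda_2=2$ while the claim is $\mathcal{O}(h^3)$. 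Your own closing sentence, asserting that the cross terms ``carry the combined rate $h^{\lambda_i+\lambda_j}\gtrsim h^{\sigma+1}$,'' has the inequality on the wrong side and concedes the problem. Moreover, the paper's experiments (\cref{tab:nonuni_pairs}) show that on general meshes $\tilde g_h(s_1,s_2)$ really does converge only like $h^{\lambda_1+\lambda_2}$, so no sign-insensitive, term-by-term estimate of this kind can ever prove \cref{aprconvstr2}; the extra order must come from a cancellation. The paper obtains it by \emph{not} splitting $c_h(\sihm,\sjhm)$: testing the modified Galerkin orthogonality \cref{GO} with $\sjhm$ gives $a(s_i-\sihm,\sjhm)+c_h(\sihm,\sjhm)=0$, so the entire left-hand side of \cref{aprconvstr2} collapses exactly to $a(s_i-\sihm,s_j)$, see \cref{tott2}. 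In your decomposition this cancellation is forfeited, and your remaining ``bulk term'' still contains the contribution $a(s_i-\sihm,\sjhm)$, for which you offer no estimate at all.

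The second gap is in the near field. The \cref{U}-assumption \cref{assint} is an identity on the fixed coarse patch $\Upsilon$ only, whereas $a(s_i-\sihm,s_j)$ lives on all of $\Omega$; saying that one ``inserts the orthogonality to cancel the leading contribution'' names the difficulty but does not resolve it. What actually transfers \cref{assint} to the discrete level in the paper is a chain of constructions your sketch does not contain: a radial cut-off $\chi$ for which $a(s_i,(1-\chi)s_j)=0$ exactly; the auxiliary localized discrete problem \cref{defgh2}, whose solution $q_h$ carries the boundary values $\Ihk s_i$ on $\partial\Omega_1$; harmonicity of $s_j$ in $\Omega_1$ together with integration by parts, which convert \cref{assint} into $\int_{\Omega_1}\nabla s_j\cdot\nabla q_h\,dx=0$, see \cref{intort}; and finally the discrete-harmonic trace estimate \cref{t1tt2}, $\|q_h-\sihm\|_{1,\Omega_1}\lesssim h^{\sigma}$ --- the only place where the hypothesis \cref{assopt} enters --- paired with the interpolation bound $\|(1-\chi)s_j-\Iwhk((1-\chi)s_j)\|_{1,\Omega_1}\lesssim h^{\lambda_j}\lesssim h$ for $j\geqslant 2$. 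It is this product of two genuinely small factors, $h^{\sigma}\cdot h^{\lambda_j}$, that produces the rate $h^{\sigma+1}$; your outline never constructs either factor, and pairing the hypothesis \cref{assopt} for $s_i$ directly against smoothness of $s_j$, as you suggest for the far field, only reproduces the far-field term (T1), not the near-field term (T2) where the whole difficulty sits.
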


\begin{proof}
	Before we start with the proof, note that the condition $i <(\sigma+1)\frac{\omega}{2\pi}$ identifies the number of the singular functions  for which
 $g_h(s_i)$ contribute to the global pollution  $g_h(u)$ and thus need to be a~priori corrected by a suitable choice of $c_h(\cdot,\cdot)$. If $\sigma = k$, it is also equivalent to $2\lambda_i <(k+1)$, see \cref{F:dist2}, bold lines. 

	We assume, without loss of generality due to the symmetry of $a_h(\cdot,\cdot)$, that $i\leqslant j$. The character of the problem for the higher order approximation is similar to the linear one. This time, however, instead of local symmetry-properties of $s_i$ and $s_j$, we focus on their mutual orthogonality. Again, under the assumption that the modification is free of the pollution in the very interior of the domain, we shall use a localization strategy, and aim to prove that despite the defect in the orthogonality relationship due to the approximation, the error is still small enough.

	Thus, we define a suitable decomposition of $\Omega$, see \cref{f:rec}, such that
	\begin{align*}
		\Omega_3\subset \Omega_{2}\subset \Omega_{1}\subset \Omega_0=\Omega, \qquad \text{dist}(\partial \Omega_I\setminus\partial\Omega,\partial\Omega_{I-1}\setminus\partial\Omega)>0, \quad I\leqslant 3,
	\end{align*}
	with $|\Omega_{I}| =\mathcal{O}(1)$, $\mathbf{x}_s\in\overline\Omega_I$, $\mathbf{x}_s$ denoting the singular point, and the boundaries of $\Omega_I$ matching the initial mesh. Further, we define a cut-off function
	\begin{align*} 
		\chi(r) = 
		\begin{cases}
		1 &\quad \mbox{on } \Oc{1},\\
		0 &\quad \mbox{on } \Omega_2,\\
		\end{cases}
	\end{align*}
	and its complement to one, $1-\chi$, see \cref{f:cutoffs}, both being functions of $r$, only.
	\begin{figure}
		\centerline{\includegraphics[trim=0 103 0 80,clip,width=0.8\textwidth]{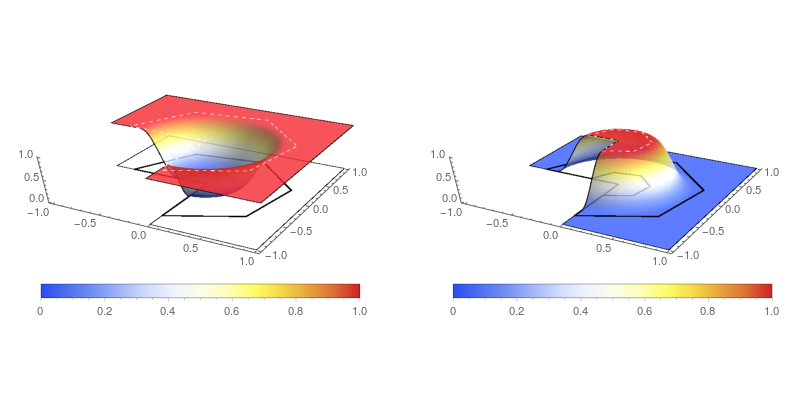}}
		\caption{Left: Cut-off $\chi$, Right: Cut-off $(1-\chi)$. Both represented on the L-shape domain with boundaries of $\Omega_1$ and $\Omega_2$.}
		\label{f:cutoffs}
	\end{figure}
	Due to the fact that $\chi=0$ on $\Sh$, we can by the modified Galerkin orthogonality decompose the term of interest into two parts, namely
	\begin{equation}\label{tott2}
	\begin{aligned}
		a(s_i &- \sihm,s_j - \sjhm) - c_h(\sihm,\sjhm) = a(s_i-\sihm, s_j)\\
		&= a(s_i-\sihm, \chi s_j- \Ihk (\chi s_j)) +  a(s_i-\sihm,  (1-\chi)s_j) = (\text{T1}) + (\text{T2}).
	\end{aligned}
	\end{equation}
	By the Cauchy-Schwarz inequality and the fact that $\chi s_j- \Ihk (\chi s_j) = 0$ on $\Omega_2$, we can estimate the first term (T1):
	\begin{align}\label{todec2}
		a(s_i-\sihm, \chi s_j- \Ihk (\chi s_j)) \lesssim \|s_i-\sihm\|_{1,\Oc{2}} \|\chi s_j- \Ihk (\chi s_j)\|_{1,\Oc{2}},
	\end{align}
	where 
	\begin{align*}
		\|\chi s_j- \Ihk (\chi s_j)\|_{1,\Oc{2}} \lesssim h^k,
	\end{align*}
	since $\chi s_j$ is sufficiently smooth on $\Oc{2}$. The approximation error term in \cref{todec2} can be bounded by the Wahlbin estimate \cref{whlb1}. More precisely,
	\begin{align}\label{rek}
		\|s_i -\sihm\|_{1,\Oc{2}} \lesssim \|s_i - \Ihk s_i\|_{1,\Oc{3}} + \|s_i-\sihm\|_{0}\lesssim h^{\lambda_i+\lambda_1-\varepsilon} \lesssim h,
	\end{align}
	since the singular functions are smooth away from the singularity and \cref{l:standest}. Thus, all together, we obtain for (T1):
	\begin{align}\label{oto2}
		(\text{T1})\lesssim h^{k+1}.
	\end{align}
	Let us turn our attention to (T2). Due to to the mutual orthogonality between $s_i$ and $s_j$, $i\neq j$, and the dependence of $\chi$ only on the radial component, we have that
	\begin{align}\label{exporth}
		a(s_i,(1- \chi) s_j)= 0.
	\end{align}
	Nevertheless, in the general case, the approximations will not preserve the mutual orthogonality (or even symmetry) properties, i.e.,
	\begin{align}\label{nonorth}
		-a(\sihm,(1-\chi)s_j)\neq 0.
	\end{align}
	We proceed now as follows. First, we define a new finite element function, localized on $\Omega_1$, and with its help, we estimate the convergence rate of the left-hand side in \cref{nonorth}. Namely, let us define a finite element space $\tilde{V}_h\coloneqq \{v_h:\, v_h=w_h|_{\Omega_1}, \, w_h\in V_h\}$ and $q_h\in \tilde{V}_h$ solving a localized discrete Poisson problem:
	\begin{equation}\label{defgh2}
		\begin{aligned}
			a_h(q_h,\vh) & = a_h(\sihm,\vh) \qquad& &\forall\vh\in\tilde{V}_h\cap H^1_0(\Omega_1),\\
			q_h &= \Ihk s_i \qquad& &\text{on } \partial\Omega_1.
		\end{aligned}
	\end{equation}
	Due to the boundary conditions on $q_h$ on $\partial\Omega_1$ and the fact that $s_j$ is a harmonic function in $\Omega_1$, we have by assumption \cref{assint}:
	\begin{align}\label{intort}
		\int_{\Omega_1} \nabla s_j \cdot \nabla q_h \, dx = \int_{\partial\Omega_1\setminus\partial\Omega} (\nabla s_j\cdot \mathbf{n})\,  \Ihk s_i \, dx = \int_{\Omega_1} \nabla s_j\cdot \nabla \Ihk s_i\, dx=0,
	\end{align}
	Then, by \cref{exporth}, \cref{intort}, the dependence of $1-\chi$ only on the radial direction, and the definition of the system \cref{defgh2} with $\vh=\Iwhk ((1-\chi)s_j)$,  we can rewrite (T2) as follows:
	\begin{equation}\label{ttdec2}
		\begin{aligned}
		(\text{T2}) &= a(s_i-\sihm,(1-\chi)s_j) =-a(\sihm,(1-\chi)s_j) = a(q_h - \sihm,(1-\chi)s_j)\\
		&= a\big(q_h - \sihm,(1-\chi)s_j - \Iwhk ((1-\chi)s_j)\big).
		\end{aligned}
	\end{equation}
	Since $q_h-\sihm$ is a discrete harmonic on $\Omega_1$, its $H^1$-norm can be  bounded by its trace norm:
	\begin{equation}\label{t1tt2}
	\begin{aligned}
		\|q_h-\sihm\|_{1,\Omega_1} &\lesssim \|q_h-\sihm\|_{H^{\frac{1}{2}}(\partial \Omega_1)}
		\lesssim\|s_i-q_h\|_{H^{\frac{1}{2}}(\partial \Omega_1)} + \|s_i-\sihm\|_{H^{\frac{1}{2}}(\partial \Omega_1)}\\
		&\lesssim \|s_i-\Ihk s_i\|_{1,\Oc{1}} + \| s_i-\sihm\|_{1,\Oc{1}} \lesssim h^\sigma,
	\end{aligned}
	\end{equation}
	where we have used the triangle inequality in the second step, the equivalence of $H^{\frac{1}{2}}(\partial\Omega_1)-$ and $H^{\frac{1}{2}}_{00}(\partial\Omega_1\setminus\partial\Omega)-$norms for functions from $H^1_0(\Omega)$ together with the trace theorem in the third one, and the assumption~\cref{assopt} with interpolation estimates away from the singularity in the last one.
	Also, the interpolation error in $\Omega_1$ is, due to \cref{estinttilde2} and $\lambda_j\geqslant 1$ due to $j\geqslant2$, as follows:
	\begin{align}\label{t2tt2}
		\|(1-\chi)s_j&-\Iwhk((1-\chi)s_j)\|_{1,\Omega_1}\lesssim h^{\lambda_j}\lesssim h.
	\end{align}
	Both auxiliary results \cref{t1tt2} and \cref{t2tt2} inserted into \cref{ttdec2} imply $(\text{T2}) \lesssim h^{\sigma+1}$, which together with \cref{oto2} concludes the proof.
	Note also that if the assumption \cref{assopt} is fulfilled for $\sigma=k$, we obtain the optimal result $(\text{T2}) \lesssim h^{k+1}$.
\end{proof}

What we show in the next lemma is, that the assumption on the errors on $\Oc{1}$ is actually fulfilled if the correction is suitable, namely we have the following.
\begin{lemma}\label{l:gradhigh}
	Under the assumption $ g_h(s_i)=\mathcal{O}(h^{k+1})$, $i\leqslant K$, the approximation error has the local property on $\Oc{1}$:
	\begin{align}\label{gradhigh}
		\|s_i-\sihm\|_{0,\Oc{1}} \lesssim h^{k}.
	\end{align}
\end{lemma}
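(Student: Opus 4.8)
The plan is to derive the far-field estimate from the global weighted bound of \cref{weiconv} applied to the singular function itself, and to break the apparent circularity---\cref{weiconv} presupposes control of all pairing terms \cref{assums}, whose off-diagonal part is furnished by \cref{l:aprconv2}, which in turn consumes a far-field bound---by bootstrapping in the convergence order.

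First I note that the radial weight is inert on $\Oc{1}$: there $r\gtrsim 1$ and $\alpha=\ta+k-1>0$, so $\|s_i-\sihm\|_{0,\Oc{1}}\lesssim\|s_i-\sihm\|_{0,\alpha}$, and it suffices to bound the global weighted $L^2$-error. Since $s_p$ solves $-\Delta s_p=g_p$ with $g_p\coloneqq-\Delta s_p$ smooth and supported in $\{b\le r\le a\}$, hence $g_p\in H^{k-1}_{-\ta}(\Omega)$ with $\mathcal{O}(1)$-norm, every $s_p$ is an admissible primal datum for \cref{weiconv}; moreover its expansion \cref{sumdec} is trivial, $s_p=1\cdot s_p$ with vanishing smooth remainder, because near the corner $s_p$ equals the harmonic $r^{\lambda_p}\sin(\lambda_p\theta)$ and carries no other $r^{\lambda_q}$-component. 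Thus, once \cref{assums} holds at some order $\sigma\le k$, \cref{weiconv} applied to each $s_p$, $p\le N_u$, yields $\|s_p-\shm{p}\|_{0,\alpha}\lesssim h^{\sigma+1}$.

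It remains to produce \cref{assums} at order $\sigma$, i.e. $a(s_p-\shm{p},s_q-\shm{q})-c_h(\shm{p},\shm{q})=\mathcal{O}(h^{\sigma+1})$ for all $p,q\le N_u$, and here I split the pairs exactly as in the case table preceding \cref{l:aprconv1}. Pairs with $\lambda_p+\lambda_q\ge\sigma+1$ are $\mathcal{O}(h^{\sigma+1})$ by \cref{l:aprconv1} (its closing remark). Off-diagonal pairs with $\lambda_p+\lambda_q<\sigma+1$ are handled by \cref{l:aprconv2} at order $\sigma$, whose hypothesis \cref{assopt} is precisely $g_h(s_p)=\mathcal{O}(h^{\sigma+1})$ together with the far-field bound $\|s_p-\shm{p}\|_{0,\Oc{1}}\lesssim h^{\sigma}$. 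The remaining diagonal essential pairs $p=q$ with $2\lambda_p<\sigma+1$ (so $p\le K$) are covered directly by the standing hypothesis $g_h(s_p)=\mathcal{O}(h^{k+1})\subseteq\mathcal{O}(h^{\sigma+1})$. Hence \cref{assums} at order $\sigma$ follows once the far-field bound $\|s_p-\shm{p}\|_{0,\Oc{1}}\lesssim h^{\sigma}$ is available for the essential indices.

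This closes into an induction on $\sigma=1,\dots,k$ asserting $\|s_p-\shm{p}\|_{0,\Oc{1}}\lesssim h^{\sigma}$ for all $p\le N_u$. For the base $\sigma=1$, the linear energy-correction estimate recalled in the initial step of the proof of \cref{weiconv} gives $\|s_p-\shm{p}\|_{0,\ta}\lesssim h^{2}$, whence the far-field bound. For the step, the far-field bound at order $\sigma$ yields \cref{assums} at order $\sigma$ by the previous paragraph, and \cref{weiconv} upgrades this to $\|s_p-\shm{p}\|_{0,\alpha}\lesssim h^{\sigma+1}$, i.e. the far-field bound at order $\sigma+1$; iterating up to $\sigma=k$ gives \cref{gradhigh}. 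Note that only \cref{l:aprconv2} at orders $\le k-1$ is consumed here, the sharp order-$k$ application being reserved for the main proof, which explains the gap between this $h^{k}$ bound and the optimal rate. The one genuine point to watch is that the bootstrap is non-circular: at each level \cref{l:aprconv2} is fed the far-field bound of the \emph{same} order $\sigma$ already secured at the preceding step, while \cref{weiconv} produces order $\sigma+1$, and the diagonal pollution is supplied once and for all by the hypothesis, so no quantity is invoked before it has been established.
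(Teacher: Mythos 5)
Your bootstrap is correct and runs on the same engine as the paper's own proof --- at each level you secure \cref{assums} at order $\sigma$ by splitting the pairs exactly as the paper does (remark after \cref{l:aprconv1} for $\lambda_p+\lambda_q\geqslant\sigma+1$, \cref{l:aprconv2} for the off-diagonal low pairs, the standing hypothesis for the essential diagonal ones), then let \cref{weiconv} lift the order from $\sigma$ to $\sigma+1$ --- but your bookkeeping is genuinely different. The paper never touches the global weighted norm inside this proof: it introduces $k$ nested far-field regions $\Oc{1}\subset\Oc{2}\subset\dots\subset\Oc{k}$ and spends one application of the Wahlbin estimate \cref{whlb1} per round, converting the local $L^2$ bound on a larger region into a gradient bound on the next smaller one, so that the domain shrinks once per gained power of $h$; this explicitly supplies the local $H^1$ control that the proof of \cref{l:aprconv2} actually consumes in \cref{t1tt2} (its stated hypothesis \cref{assopt} is only an $L^2$ bound on $\Oc{1}$). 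You instead note that the weight $r^\alpha$ is inert on $\Oc{1}$, so the global $L^2_\alpha$ bound delivered by \cref{weiconv} already dominates the far-field error, and no subdomain hierarchy or Wahlbin step is needed. This is a legitimate simplification: your global weighted bound controls the $L^2$ error on \emph{every} far-field region simultaneously, so even the implicit $H^1$ needs of \cref{l:aprconv2} (recoverable from one application of \cref{whlb1} between two far-field regions) remain available; the price is that you must take \cref{l:aprconv2} strictly at its stated hypotheses, whereas the paper's heavier bookkeeping makes those internal needs explicit.

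One point should be tightened. For the base case you invoke the Egger-type estimate $\|s_p-\shm{p}\|_{0,\ta}\lesssim h^2$ standalone, but in the paper this estimate appears only inside the proof of \cref{weiconv}, i.e.\ under the hypothesis \cref{assums}, whose mixed terms are precisely what your bootstrap has not yet produced at that stage --- so as written the base case has a whiff of circularity. The paper seeds its induction instead with the unconditional standard estimate (\cref{l:standest}, as used in \cref{rek}), which gives $\|s_p-\shm{p}\|_{0}\lesssim h^{\lambda_1+\lambda_p-\eps}\lesssim h$ and hence the far-field bound at order $\sigma=1$ with no pollution condition whatsoever; substituting this for your base case removes the issue without changing anything else in your argument.
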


\begin{figure}
	\newcommand{\rec}[1]{plot coordinates{(0,0) (0,-#1) (#1,-#1) (#1,#1) (-#1,#1) (-#1,0) (0,0)}}
	\newcommand{\pac}[1]{plot coordinates{(0., 0.) (0., {-#1}) ({0.809017*#1}, {-0.587785*#1}) ({0.951057*#1}, {0.309017*#1}) ({0.309017*#1}, {0.951057*#1}) ({-0.587785*#1}, {0.809017*#1}) (-#1, 0.) (0., 0.)}}
	\newcommand{\paccom}[2]{plot coordinates{(0., {-#1}) ({0.809017*#1}, {-0.587785*#1}) ({0.951057*#1}, {0.309017*#1}) ({0.309017*#1}, {0.951057*#1}) ({-0.587785*#1}, {0.809017*#1}) (-#1, 0.) (-#2, 0.) (-#2,#2) (#2,#2) (#2,-#2) (0,-#2) (0,-#1)}}
	\newcommand{\distw}{5.25pt}
	\newcommand{\diste}{5.25pt}
	\begin{center}
	\begin{tikzpicture}[scale=0.33,font=\small]
		\draw[color=black]\rec{5};
		\draw[color=black,fill=blue!20]\pac{3.5};
		\draw[color=black,pattern=my north west lines, line space=\distw, pattern color=black]\pac{2.5};
		\draw[color=black,pattern=my north east lines, line space=\diste,pattern color = orange]\pac{1.5};
		\draw[color=green,line width=0.8pt]\pac{0.4};
		\node at (4.5,4.5) {$\Omega$};
		\draw (5.5,3.5) node[right]{$\ \Omega_1$};
		\draw[color=black,fill=blue!20] (8.5,3.5) circle (0.4);
		\draw (5.5,2) node[right]{$\ \Omega_{2}$};
		\draw[color=black,pattern=my north west lines, line space=5pt, pattern color=black] (8.5,2) circle (0.4);
		\draw (5.5,0.5) node[right]{$\ \Omega_{3}$};
		\draw[color=black,pattern=my north east lines, line space=5pt,pattern color = orange] (8.5,0.5) circle (0.4);
		\draw (5.5,-1) node[right]{$\ \Omega_{k}$};
		\draw[solid,green] (8.5,-1) circle (0.4);
		\draw[black, <->] (-2.5,-0.3) -- (-1.5,-0.3);
		\node at (-2,-1) {$\mathcal{O}(1)$};
	\end{tikzpicture}
	\hspace*{1cm}
	\begin{tikzpicture}[scale=0.33,font=\small]
		\draw[preaction={fill=blue!20},color=black,pattern=my north west lines, line space=\distw]\rec{5};
		\draw[color=black,pattern=my north east lines, line space=\diste]\rec{5};
		\draw[preaction={fill=white},color=black,pattern=my north west lines, line space=\distw]\pac{3.5};
		\draw[color=black,pattern=my north east lines, line space=\diste]\pac{3.5};
		\draw[preaction={fill=white},color=black,pattern=my north east lines, line space=\diste]\pac{2.5};
		\draw[color=black,fill=white]\pac{1.5};
		\draw[color=green,line width=0.8pt]\paccom{0.4}{5};
		\node at (4.5,4.5) {$\Omega$};
		\draw (5.5,3.5) node[right]{$\ \Oc{1}$};
		\draw[color=black,fill=blue!20] (8.5,3.5) circle (0.4);
		\draw (5.5,2) node[right]{$\ \Oc{2}$};
		\draw[color=black,pattern=my north west lines, line space=5pt, pattern color=black] (8.5,2) circle (0.4);
		\draw (5.5,0.5) node[right]{$\ \Oc{3}$};
		\draw[color=black,pattern=my north east lines, line space=5pt,pattern color = orange] (8.5,0.5) circle (0.4);
		\draw (5.5,-1) node[right]{$\ \Oc{k}$};
		\draw[solid,green] (8.5,-1) circle (0.4);
	\end{tikzpicture}
	\end{center}
	\caption{Domain decomposition into several embedded sub-domains $\Omega_I$, $I\leqslant k$ (left), and their complements to $\Omega$ (right). Note that $|\Omega_I|=\mathcal{O}(1)$.}
	\label{f:rec}
\end{figure}

\begin{proof}
	First, let us define for each approximation order $k$ a further decomposition of $\Omega$, see \cref{f:rec}, such that
	\begin{align*}
		\Omega_k\subset \Omega_{k-1}\subset...\subset \Omega_1\subset \Omega_0=\Omega, \qquad \text{dist}(\partial \Omega_I\setminus\partial\Omega,\partial\Omega_{I-1}\setminus\partial\Omega)>0, \, I\leqslant k,
	\end{align*}
	with $|\Omega_{I}| =\mathcal{O}(1)$, $\mathbf{x}_s\in\overline\Omega_I$, and the boundaries of $\Omega_I$ matching the initial mesh. 
	
	In the linear approximation case, i.e., $k=1$, there is nothing to prove due to \cref{rek} and $\|s_i -\sihm\|_{1,\Oc{1}} \leqslant \|s_i - \sihm\|_{1,\Oc{2}}$. Let us thus turn our attention to a general approximation order $k$. By \cref{standest}, similarly as in \cref{rek}, we have for $\sihm\in\Vhk$:
 	\begin{align*}
		\|\nabla(s_i -\sihm)\|_{0,\Oc{k-1}} \lesssim \|\nabla(s_i - \Ihk s_i)\|_{0,\Oc{k}} + \|s_i-\sihm\|_{0,\Oc{k}}\lesssim h.
	\end{align*}
	Using this in \cref{assopt} with $\sigma=1$, we can then also show by \cref{l:aprconv2} that at least it holds:
	\begin{align*}
		a(s_i-\sihm,s_j-\sjhm) - c_h(\sihm,\sjhm)  \lesssim h^{\sigma+1}=h^2, \qquad i,j\leqslant N_u, \qquad i\neq j.
	\end{align*}
	In combination with \cref{weiconv},  we now get:
 	\begin{align}\label{oc1}
		\|s_i -\sihm\|_{0,\Oc{k-1}} \lesssim h^2.
	\end{align}
	For $k>2$ we continue by a general inductive step, i.e., we aim to show that \mbox{$\|s_i-\sihm\|_{0,\Oc{k-l+2}}\lesssim h^{l-1}$} implies $\|s_i-\sihm\|_{0,\Oc{k-l+1}}\lesssim h^{l}$.
	This we obtain by the Wahlbin estimate \cref{whlb1}, namely
 	\begin{align*}
		\|\nabla(s_i -\sihm)\|_{0,\Oc{k-l+1}} \lesssim \|\nabla(s_i - \Ihk s_i)\|_{0,\Oc{k-l+2}} + \|s_i-\sihm\|_{0,\Oc{k-l+2}}\lesssim h^{l-1},
	\end{align*}
	and again, we can use \cref{l:aprconv2} and~\ref{weiconv}, this time with $\sigma=l-1$, to show 
 	\begin{align}\label{oc2}
		\|s_i -\sihm\|_{0,\Oc{k-2}} \lesssim h^{\sigma+1}=h^l.
	\end{align}
	The induction steps end as we reach the actual interpolation convergence order, i.e.,
 	\begin{align*}
		\|\nabla(s_i -\sihm)\|_{0,\Oc{1}} \lesssim \|\nabla(s_i - \Ihk s_i)\|_{0,\Oc{2}} + \|s_i-\sihm\|_{0,\Oc{2}}\lesssim h^{k}.
	\end{align*}
	Finally, by \cref{l:aprconv2} and the optimal convergence of the approximation order in $L^2(\Oc{1})$, we obtain the required \cref{gradhigh}.
\end{proof}

Now, putting together \cref{weiconv}, \ref{l:aprconv1}, \ref{l:aprconv2} and \ref{l:gradhigh} we have finished the proof of the \cref{mainthm}.

%!TEX root = main_ho.tex
\section{Construction of the modification $c_h(\cdot,\cdot)$}\label{S:gamma}

The previous sections were dedicated to the proof of \cref{mainthm}, where the main assumption was \cref{assmthm}, namely $g_h(s_i) = \mathcal{O}(h^{k+1})$, $i<\leqslant K$. This means, choosing a correction term $c_h(\cdot,\cdot)$ in such a way that \cref{assmthm} is satisfied guarantees us the optimal order convergence rates of the modified scheme. We remind that $K$ from \cref{Kdef} refers to the total number of essential pollution functions $g_h(s_i)$ contributing to the total pollution. Also, we define the radial element layers $\Sh^i$ with respect to the re-entrant corner placed at $\mathbf{0}$ as:
\begin{align*}
	\Sh^1 \coloneqq \{T\in\mathcal{T}_h:\ \mathbf{0}\in\partial{T}\},
	\qquad \Sh^{i} \coloneqq \{T\in\mathcal{T}_h:\ \partial T\cap\partial \Sh^{i-1}\neq\emptyset\},\ i=2,..., K.
\end{align*}

Following the basic concept from previous studies on energy-correction methods, we can simply adapt the construction of $c_h(\cdot,\cdot)$ to our higher order case and define:
\begin{align}\label{chR}
	c^R_h(u,v) \coloneqq \sum_{i=1}^{K} \int_{\Sh^i} \gamma_i^R\, \nabla u\cdot \nabla v \, dx,
	\qquad \Sh^R=\text{int}\left(\cup_{i=1}^{K}\overline{\Sh^i}\right),
\end{align}
where the constants $\gamma^R_i$ are called correction parameters. We also call the scheme with $c_h^R$ as a method with (radial) layer-correction, since $K$-layers of elements $\Sh^i$ are involved in the definition of the modification.  It is clear that with higher $k$ the support of $c_h^R(\cdot,\cdot)$ is significantly larger. We also present an alternative approach of a function-correction for which, in contrast with the layer-modification, we need only a single layer-support independently on $k$ or $\omega$. Namely, we define:
\begin{align}\label{chF}
	c^F_h(u,v) \coloneqq \sum_{i=1}^{K} \int_{\Sh^1} \gamma^F_i \hat{r}^{i-1} \nabla u\cdot \nabla v \,dx,
	\qquad \Sh^F=\Sh^1,
\end{align}
where $\hat{r}\in[0,1]$ is the $h$-scaled distance from the origin, i.e., $\hat{r}=r/h$.

We shortly denote the correction parameters by a vector valued  $\gam^{\sharp} = (\gamma^{\sharp}_1,\dots,\gamma^{\sharp}_K)^\top$, $\gam^{\sharp}\in\mathbb{R}^K$, ${\sharp}\in\{R,F\}$, and assume that they are such that \cref{A} is satisfied. For example, in the case of the layer-correction, a sufficient condition is that $\gam^R\in\mathcal{B}^{\infty}_\xi(0)=\{\mathbf{y}\in\mathbb{R}^K:\ \|\mathbf{y}\|_{l^\infty}<\xi\}$ with a fixed $\xi<1$. We will see later in \cref{S:numerics} that all our choices of $\gam^R$ satisfy \cref{A}. We will also show that both types of modification are good choices, if optimal parameters $\gam^R$, $\gam^F$ are known. The basic question which then remains is: How to obtain the optimal correction parameters? Let us describe this topic briefly for the layer-correction $c^R_h(\cdot,\cdot)$ and, for the better readability, skip the index $R$. For the function-correction, we use the same strategy.

We follow the same technique as proposed in \cite{Egger2014} and proven in \cite{Ruede2014} for the linear elements for the Poisson problem and in \cite{John2015} for linear elements (but also vector-valued $\gh(\cdot)$) for the case of Stokes problem. This means, we construct a sequence $\{\gam_h\}_{h>0}\subset\mathcal{B}^{\infty}_\xi(0)$ satisfying certain conditions, such that it converges to a unique asymptotic value $\gam^\star\in\mathcal{B}^{\infty}_{\xi}(0)$, being the optimal correction parameter. Namely, for each admissible $\gam=(\gamma_1,\dots,\gamma_K)^\top$, we define the vector valued pollution function $\gh:\mathcal{B}^{\infty}_{\xi}(0)\rightarrow \mathbb{R}^K$ by its components:
\begin{align}\label{ghg}
	g_{h,i}(\gam) = a(s_i-\Rg s_i,s_i-\Rg s_i) - c_h(\Rg s_i,\Rg s_i), \quad  i=1,...,K,
\end{align}
where $\Rg s_i$ represents the modified-approximative solution to $s_i$, this time explicitly dependent on the choice of $\gam$. If we construct $\{\gam_h\}_{h>0}$ such that
\begin{align}\label{ghg2}
	\gh(\gam_h) = \mathbf{0}, \quad \text{for each } h>0,
\end{align}
we can possibly generalize the proof of \cite{Ruede2014} and \cite{John2015} and show the convergence $\gam_h$ to $\gam^\star$ for $h\rightarrow 0$ with a certain rate. Under the assumption on a suitably fast convergence of $\gam_h$, we can show that for the asymptotic vector $\gam^\star$, the necessary condition \cref{assmthm} on the modification $c_h(\cdot,\cdot)$ on each level is satisfied. The validity of \cref{assmthm} will be stated in the next lemma. For that, let us fist define the following auxiliary sequences:
\begin{alignat*}{2}
	\gam_h^K &\coloneqq (\gamma_{h,1}^K,...,\gamma_{h,K}^K)^\top & \text{ such that } \gh(\gam^K_h) &= \boldsymbol{0}, \\
	\gam_h^K &\rightarrow \gam^K \coloneqq (\gamma_1^K,...,\gamma_K^K)^\top, &&\\
	&\vdots &&\\
	\gam_h^l &\coloneqq (\gamma_{h,1}^l,\dots,\gamma_{h,l}^l,\gamma_{l+1}^K,\dots,\gamma_K^K)^\top \qquad&\text{ such that } g_{h,i}(\gam^l_h) &= 0  \quad \text{ for all } i\leqslant l, \\
	\gam_h^l &\rightarrow \gam^l \coloneqq (\gamma_1^l,\dots,\gamma_{l}^l,\gamma_{l+1}^K,\dots,\gamma_K^K)^\top, && \\
	&\vdots &&\\
	\gam_h^1 &\coloneqq (\gamma_{h,1}^1,\gamma_2^K,...,\gamma_K^K)^\top \qquad&\text{ such that } g_{h,1}(\gam^1_h) &= 0, \\
	\gam_h^1 &\rightarrow \gam^1 \coloneqq (\gamma_1^1,\gamma_2^K,...,\gamma_K^K)^\top. &&
\end{alignat*}

\begin{lemma}\label{l:gamconv}
	Let $\gam_{h}^l$ converge to $\gam^l$ at least with the following rate:
	\begin{align}\label{gamconv}
		\|\gam^l-\gam_{h}^l\|_{l^2} \lesssim h^{k+1-2\lambda_l},
	\end{align}
	for all $l\leqslant K$. Then the condition \cref{assmthm} is fulfilled for correction with $\gam=\gam^K=\gam^\star$.
\end{lemma}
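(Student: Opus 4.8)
The plan is to exploit that each component $g_{h,i}(\cdot)$ of the pollution vector in \cref{ghg} is, as a function of the correction vector, Lipschitz continuous with the sharp constant $h^{2\lambda_i}$, and then to combine this with the calibrated convergence rate \cref{gamconv} taken precisely at the matching index $l=i$. The exponent $k+1-2\lambda_l$ appearing in \cref{gamconv} is exactly what is needed so that, multiplied by this Lipschitz constant, it produces $h^{k+1}$.

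First I would compute the sensitivity of $g_{h,i}$ with respect to $\gam$. Writing $e_i=s_i-\Rg s_i$ and differentiating the defining relation $a(\Rg s_i,\vh)-c_h(\Rg s_i,\vh)=a(s_i,\vh)$ for all $\vh\in\Vhk$ with respect to $\gamma_j$, the derivative $p=\partial_{\gamma_j}\Rg s_i\in\Vhk$ solves the modified discrete problem $a_h(p,\vh)=\int_{\Sh^j}\g\Rg s_i\cdot\g\vh\,dx$. Substituting $p$ into the derivative of $g_{h,i}$ and using the modified Galerkin orthogonality $a(e_i,\vh)+c_h(\Rg s_i,\vh)=0$ (cf.\ \cref{GO}) with $\vh=p$, together with the symmetry of $c_h$, the two terms carrying $p$ cancel and one is left with the clean identity
\[
	\partial_{\gamma_j} g_{h,i}(\gam) = -\int_{\Sh^j}|\g\Rg s_i|^2\,dx .
\]
Since $\gam$ ranges over $\mathcal{B}^\infty_\xi(0)$, where \cref{A} guarantees uniform ellipticity, the standard estimates of \cref{l:standest} give $\|\g\Rg s_i\|_{L^2(\Sh^j)}\lesssim\|\g s_i\|_{L^2(\Sh^j)}+\|\g(s_i-\Rg s_i)\|_0\lesssim h^{\lambda_i}$ uniformly in $\gam$, because $r\sim h$ on the element layer $\Sh^j$. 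Hence $\|\nabla_{\gam}g_{h,i}(\gam)\|\lesssim h^{2\lambda_i}$ on the whole admissible ball, and the mean value theorem yields $|g_{h,i}(\gam)-g_{h,i}(\tgam)|\lesssim h^{2\lambda_i}\|\gam-\tgam\|_{l^2}$.

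The second ingredient is that the asymptotic vectors coincide, $\gam^l=\gam^K=\gam^\star$ for every $l\leqslant K$. Passing to the limit $h\to 0$ in the relations $g_{h,i}(\gam_h^l)=0$, $i\leqslant l$, after rescaling by $h^{-2\lambda_i}$ and using $\gam_h^l\to\gam^l$, shows that $\gam^l$ and $\gam^\star$ solve the same limiting subsystem with the identical frozen tail $(\gamma_{l+1}^K,\dots,\gamma_K^K)$; the uniqueness of the asymptotic correction parameter (invertibility of the limiting single-layer Jacobian, as in \cite{Ruede2014,John2015}) then forces $\gam^l=\gam^\star$. Granting this, for each fixed $i\leqslant K$ I would use that $\gam_h^i$ annihilates the $i$-th component, $g_{h,i}(\gam_h^i)=0$, and estimate
\[
	|g_{h,i}(\gam^\star)| = |g_{h,i}(\gam^\star)-g_{h,i}(\gam_h^i)| \lesssim h^{2\lambda_i}\,\|\gam^i-\gam_h^i\|_{l^2}\lesssim h^{2\lambda_i}\,h^{k+1-2\lambda_i}=h^{k+1},
\]
where the penultimate step is \cref{gamconv} at level $l=i$. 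As $g_{h,i}(\gam^\star)=g_h(s_i)$, this is exactly \cref{assmthm}.

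The main obstacle I anticipate is the second ingredient, i.e.\ rigorously justifying $\gam^l=\gam^\star$. This needs both the existence of the rescaled limits $h^{-2\lambda_i}g_{h,i}(\gam)\to G_i(\gam)$, uniformly on $\mathcal{B}^\infty_\xi(0)$, and the nondegeneracy of the resulting limiting Jacobian $(\partial_{\gamma_j}G_i)$. The latter is a generalized-Vandermonde-type condition stemming from the distinct radial powers $r^{2\lambda_i-2}$ integrated over the distinct element layers $\Sh^j$, and it is precisely the point where the hierarchical freezing of the tail and the framework of \cite{Ruede2014,John2015} must be invoked. By contrast, the Lipschitz scaling $h^{2\lambda_i}$, which is the quantitative heart of the argument, and the arithmetic matching it against the rate $h^{k+1-2\lambda_i}$ are routine once the derivative identity above is established.
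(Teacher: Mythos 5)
Your mechanical ingredients match the paper's setup exactly: the derivative identity $\partial_{\gamma_j} g_{h,i}(\gam) = -\int_{\Sh^j}|\g\Rg s_i|^2\,dx$ is the paper's \cref{grad}, your Lipschitz constant of size $h^{2\lambda_i}$ is the upper half of \cref{boundchj}, and the closing arithmetic $h^{2\lambda_i}\cdot h^{k+1-2\lambda_i}=h^{k+1}$ is the paper's initial induction step. The genuine gap is precisely the step you flag yourself: the coincidence $\gam^l=\gam^K=\gam^\star$ of all asymptotic vectors. Your route to it --- rescaling $g_{h,i}$ by $h^{-2\lambda_i}$, passing to limits $G_i$, and invoking invertibility of a limiting Jacobian for uniqueness --- is not carried out and cannot be carried out from the hypotheses of the lemma: neither the existence of uniform rescaled limits nor the nondegeneracy of their Jacobian is assumed in the statement or established anywhere in the paper; for higher order elements these are exactly the kind of facts one would still have to prove. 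Since the whole conclusion hinges on this coincidence (without it, \cref{gamconv} at level $l=i$ controls $\|\gam^i-\gam_h^i\|_{l^2}$, not the quantity $\|\gam^\star-\gam_h^i\|_{l^2}$ that your final estimate needs), the proposal as written does not prove the lemma.

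The paper closes this step by a purely finite-$h$ argument that you could adopt: it proves $\gam^l=\gam^{l-1}$ by induction and contradiction, using the \emph{two-sided} bound \cref{boundchj}, in particular the lower bound $-\partial_{\gamma_j}g_{h,i}(\gam)\geqslant m(\kappa_j h)^{2\lambda_i}$, which you never derive. If $\gam^l\neq\gam^{l-1}$, the mean value theorem together with this lower bound and the inductive hypothesis $g_{h,i}(\gam^{l-1})=\mathcal{O}(h^{k+1})$ forces $|g_{h,i}(\gam^l)|\gtrsim h^{2\lambda_i}$ for $i\leqslant l-1$, whereas the Lipschitz bound combined with \cref{gamconv} and $g_{h,i}(\gam_h^l)=0$ gives $|g_{h,i}(\gam^l)|\lesssim h^{2\lambda_i+(k+1-2\lambda_l)}$; since $2\lambda_l<k+1$ these are incompatible, so $\gam^l=\gam^{l-1}$ for every $l$, all asymptotic vectors equal $\gam^\star$, and your final estimate then goes through verbatim. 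A further minor defect: obtaining the Lipschitz constant through \cref{l:standest}, as you propose, costs an $\eps$ in the exponent (one only gets $h^{2\lambda_i-2\eps}$, since $s_i\in H^{k+1}_{k-\lambda_i+\eps}(\Omega)$), which would yield only $\mathcal{O}(h^{k+1-2\eps})$; the clean exponent in \cref{boundchj}, which the paper obtains by generalizing the argument of \cite{John2015}, is what delivers exactly $\mathcal{O}(h^{k+1})$.
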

Before we step to the proof, let us mention here general properties of $\gh(\gam)$. Namely, by the modified Galerkin orthogonality and the $h-$uniform a~priori bounds on $\Rg s_i$, we can show the Lipschitz continuity of $g_{h,i}$ on $\mathcal{B}^\infty_{\xi}(0)$:
\begin{align*}
	g_{h,i}(\gam)\!-\!g_{h,i}(\tgam) &= a(s_i,\Rtg s_i)\! -\! a(s_i, \Rg s_i)\\
	&= -\!\sum_{j=1}^K\! (\gamma_j-\tilde\gamma_j)\int_{\Sh^j}\!\!\g\Rg s_i\cdot\g\Rtg s_i\, dx,
\end{align*}
and thus
\begin{align*}
	|g_{h,i}(\gam)-g_{h,i}(\tgam)|\lesssim \|\gam-\tilde\gam\|_{l^1} \|s_i\|_1^2.
\end{align*}
The function $g_{h,i}$ is also differentiable:
\begin{align}\label{grad}
	\frac{\partial g_{h,i}}{\partial \gamma_j} = \lim_{\tilde\gamma_j\rightarrow\gamma_j}\frac{g_{h,i}(\gamma_1,...,\tilde\gamma_j,...,\gamma_K)-g_{h,i}(\gam)}{\tilde\gamma_j-\gamma_j} = -\int_{\Sh^j}\!\!\g\Rg s_i\cdot\g\Rg s_i\, dx.
\end{align}
By generalizing the proof for the Stokes problem form \cite{John2015}, we can even show a bound of the gradient:
\begin{align}\label{boundchj}
	m (\kappa_j h)^{2\lambda_i}\leqslant
	-\frac{\partial g_{h,i}(\gam)}{\partial \gamma_j}
	\leqslant M (\kappa_j h)^{2\lambda_i},
\end{align}
with $\kappa_j$ given such that $\mathcal{B}^2_{(\kappa_{j-1}+\kappa_j)h}\subset \Sh^j\subset\mathcal{B}^2_{(\kappa_{j-1}+\kappa_j+1)h}$, $j=1,...,K$. The mesh around the re-entrant corner is then assumed to have additional properties, reflecting this ball inclusions. For a detailed discussion, see \cite{John2015}. Let us just mention here, that all the meshes we use in our computations satisfy this property, and $\kappa_j=1$ for all $j\leqslant K$.

{\it Proof of \cref{l:gamconv}}.
	We will carry our proof by induction in the index $l$, i.e., we show, that only the first $l$ components of $\gh(\gam^l)$ are of the sufficient rate under the assumption that $ g_{h,i}(\gam^{l-1})=\mathcal{O}(h^{k+1})$ for all $i\leqslant l-1$ and \cref{gamconv}. Also, we will show, that actually $\gam^l=\gam^{l-1}$.

	\noindent$\bullet \ l=1$: We have directly by the continuity of $g_{h,1}$, \cref{gamconv}, Young inequality and \cref{boundchj} with \cref{grad}:
	\begin{equation}\label{stempi}\begin{aligned}
		|g_{h,1}(\gam^1)| &= |g_{h,1}(\gam^1)-g_{h,1}(\gam_h^1)|\\
		&\leqslant |\gamma_{1}^1-\gamma_{h,1}^1| \left(\|\nabla \Rh(\gam^1) s_1\|^2_{L^2(\Sh)}+\|\nabla \Rh(\gam^1_h) s_1\|^2_{L^2(\Sh)}\right) \lesssim h^{k+1}.\end{aligned}
	\end{equation}
	$\bullet \ 1<l\leqslant K$: We assume, that $|g_{h,i}(\gam^{l-1})| \lesssim h^{k+1}$, for $i\leqslant l-1$. Due to differentiability of $\gh$ on $\mathcal{B}^\infty_{\xi}(0)$, we have by the mean value theorem in several variables for some $\boldsymbol{\zeta}\in\mathcal{B}^\infty_{\xi}(0)$:
	\begin{align*}
		g_{h,i}(\gam^l) &= g_{h,i}(\gam^{l-1})+(\gam^{l}-\gam^{l-1})\cdot\nabla_{\!\gam}g_{h,i}(\boldsymbol{\zeta})
	\end{align*}
	Let us assume that $\gam^{l}\neq\gam^{l-1}$. Since $\left|\frac{\partial g_{h,i}(\boldsymbol{\zeta})}{\partial \gamma_j}\right| \geqslant c(j) h^{2\lambda_i}$ by \cref{boundchj}, and from the induction assumption it holds that $g_{h,i}(\gam^{l-1})=\mathcal{O}(h^{k+1})$, we obtain for each $i-$th component of $\gh$, $i\leqslant l-1$, a lower bound:
	\begin{align}\label{contr1}
		|g_{h,i}(\gam^l)|\gtrsim h^{2\lambda_i},
	\end{align}
	where we also used that $2\lambda_i<k+1$. At the same time, we have by the same arguments as in the first induction step in \cref{stempi} and assumption \cref{gamconv}:
	\begin{align*}
		|g_{h,i}(\gam^l)| \lesssim \|\gam^{l}-\gam_h^{l}\|_{l^2}\, h^{2\lambda_i} \lesssim h^{2\lambda_i +(k+1-2\lambda_l)},
	\end{align*}
	which is a contradiction to \cref{contr1} again by the fact that $2\lambda_l<k+1$. This means that $\gam^{l}=\gam^{l-1}$ and thus $g_{h,i}(\gam^l) \lesssim h^{k+1}$ for all $i\leqslant l-1$. 

	From the last step then obviously holds that $\gam_h$ solving \cref{ghg2} converges component-wise to $\gam^K \eqqcolon \gam^\star$ for which $\gh(\gam^\star)=\mathcal{O}(h^{k+1})$, i.e., $g_h(s_i)=\mathcal{O}(h^{k+1})$ for all $i\leqslant K$.
\qed

We note that the sequences $\gam_h^{l}$ and $\gam^l$, respectively, are not constructive in a sense, that the explicit knowledge of the limiting vector $\gam^\star$ is required.  Nevertheless, we will show numerically in \cref{S:numerics}, that the assumption \cref{gamconv} on the convergence rates of $\gam_h^l$ are satisfied, and thus we show the validity of \cref{assmthm} for $\gam^\star$ being constructed as the root of \cref{ghg2}.

%!TEX root = main_ho.tex
\section{Numerical results}\label{S:numerics}

In this section, we show some numerical results for problem \cref{prob} using higher order finite elements with and without energy correction. We illustrate our theoretical results for schemes up to order four, and we always assume an exact solution given by: 
\begin{align*}
	u &:= s_1 + s_2+s_3+s_4 = \sum\nolimits_{i=1}^{4}  r^{\lambda_i}\sin(\lambda_i\theta).
\end{align*}
We note that each term contributes to the total energy with a similar magnitude. Also, according to \cite{HoMeWo15}, an adding of a further term  with a higher regularity to the exact solution $u$ does not influence the convergence rates.

The convergence rates and the $\gam$-convergence studies for the second, third and fourth order scheme are performed on the geometries as depicted in \cref{fig:geoms1}. 

\begin{figure}[h!]
	\begin{center}
	\includegraphics[width=0.17\textwidth]{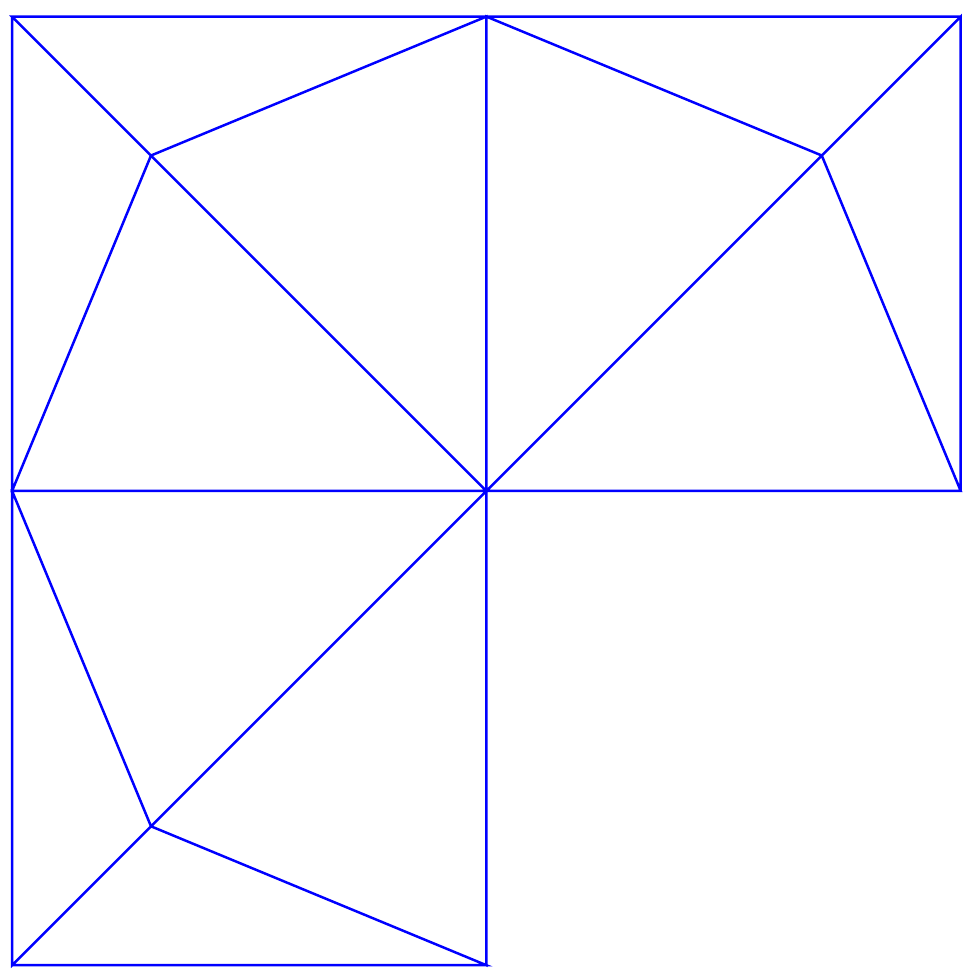}\hspace*{0.8cm}
	\includegraphics[width=0.17\textwidth]{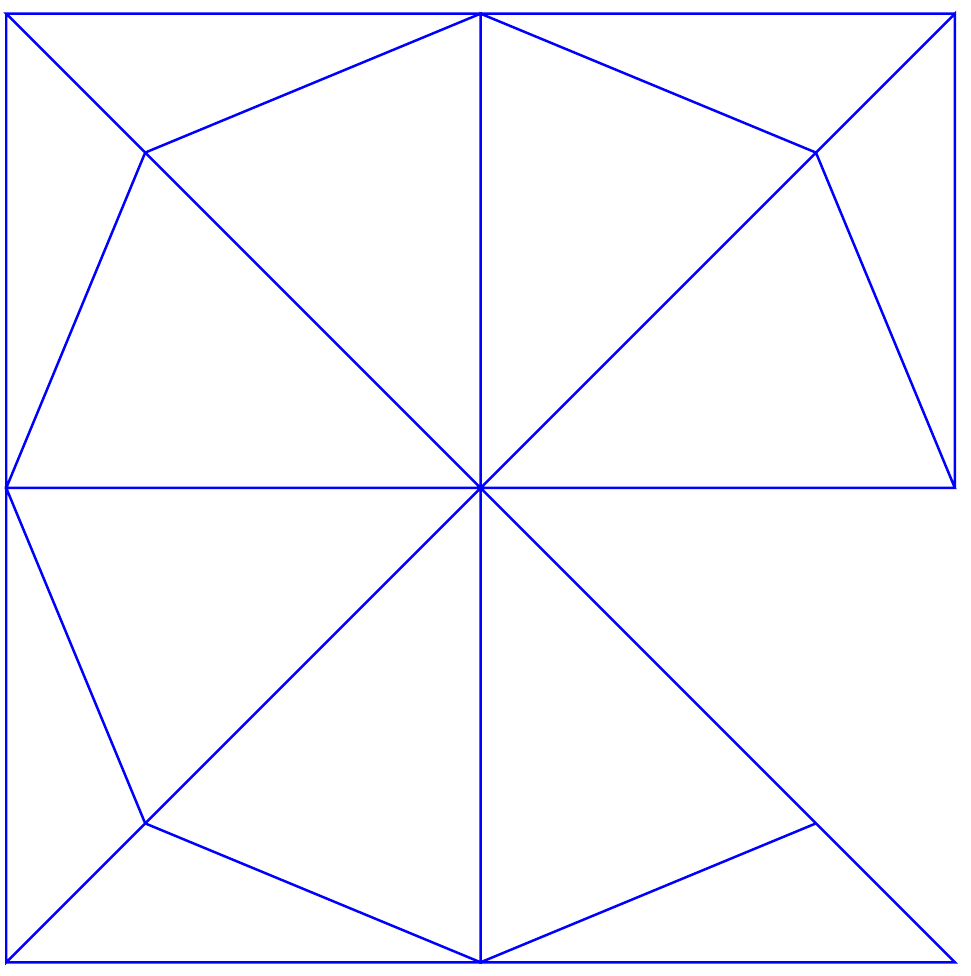}\hspace*{0.8cm}
	\includegraphics[width=0.17\textwidth]{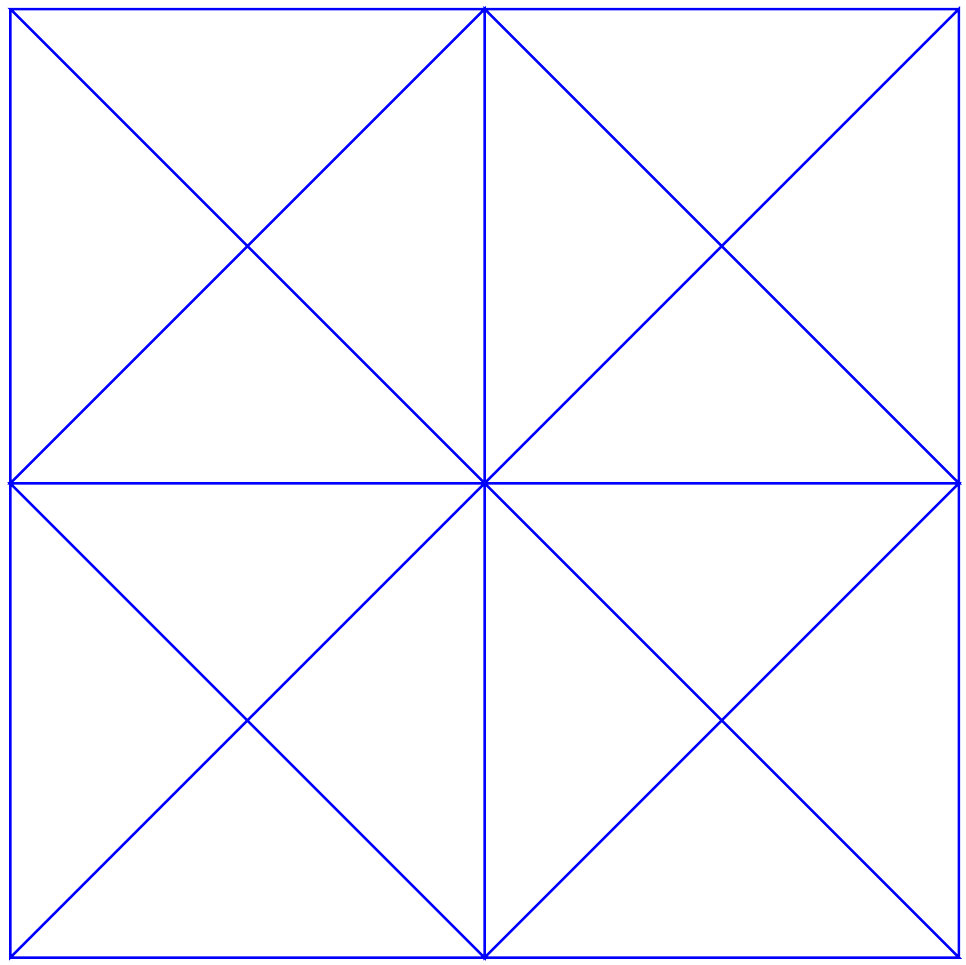}
	\caption{Initial meshes, $\ell=1$ for $\omega = 3/2\pi,\ 7/4\pi, \ 2\pi$ used for $k=2,3,4$ approximations.}
	\label{fig:geoms1}
	\end{center}
\end{figure}

This section is structured as follows: First, we present the results for the energy-correction with the layer-modification \cref{chR}. In \cref{ss:second}, a more detailed study is performed for the second order case, showing convergence results for the modified scheme. Then, in \cref{ss:thirdfourth}, we turn our attention to the results for the third and fourth order approximations, again for the layer correction. In the last part, \cref{ss:chF}, we compare the qualitative behavior of the layer- and function-correction, namely, the influence of the choice of the modification $c_h(\cdot,\cdot)$ on the quantitative error.

All the sections in this part are written exclusively either for the layer- ($c^R_h(\cdot,\cdot)$) or for the function-correction ($c^F_h(\cdot,\cdot)$). For a better readability, we skip the index $R$ and $F$ in the notation of the asymptotic values $\gam^\star$ and their components $\gamma_i^\star$, $i\leqslant K$. Also, the specific values of $\gam^\star$ are stated in the following only up to a certain precision.

Before we start with the numerical accuracy study of the energy corrected approach, we show that our local modification does not deteriorate the condition number of the stiffness-matrix. Namely, in \cref{tab:EW_ratios} we give the ratios $\frac{\hmod{(\lambda_{\min})}}{(\lambda_{\min})_h}$ and $\frac{\hmod{(\lambda_{\max})}}{(\lambda_{\max})_h}$ between eigenvalues of the stiffness matrix which belong to the modified scheme $\hmod{(\cdot)}$ and the standard scheme $(\cdot)_h$, for both correction methods $c_h^R(\cdot,\cdot)$ and $c_h^F(\cdot,\cdot)$.

\begin{table}[!ht]
	\begin{center}
	\begin{tabular}{|c||c|c|c||c|c|c|}\hline
	&\multicolumn{3}{c||}{}&\multicolumn{3}{|c|}{} \\[-0.9em] 
	&\multicolumn{3}{c||}{layer correction $c_h^R(\cdot,\cdot)$}&\multicolumn{3}{|c|}{function correction $c_h^F(\cdot,\cdot)$} \\\hline \hline
	$\ell$ & 2 &3 &4 & 2 &3 &4  \\ \hline
	& & & & & &\\[-0.8em] 
	$\frac{\hmod{(\lambda_{\min})}}{(\lambda_{\min})_h}$ & 	0.9886 &   0.9891 &   0.9952 &  0.9806  &  0.9901  &  0.9954 \\
	& & & & & &\\[-0.8em] 
	$\frac{\hmod{(\lambda_{\max})}}{(\lambda_{\max})_h}$ & 	1.0007 &   1.0000 &   1.0000 &	1.0000 &   1.0000  &  1.0000 \\ \hline
	\end{tabular}
	\label{tab:EW_ratios}
	\caption{Comparison of the condition numbers of the stiffness matrices belonging to corrected and standard scheme; $\omega=\frac74 \pi$, $k=2$.}
	\end{center}
\end{table}

\subsection{Layer-modification $c_h^R(\cdot,\cdot)$: $k=2$}\label{ss:second}
In this subsection, we study the second order scheme for the case of the layer-correction \cref{chR}. We start by the estimation of the asymptotic values of the correction parameters $\gam^\star$, then present the optimal rates obtained with the energy-correction method, and finally we highlight the importance of our symmetry assumptions on the mesh.  

\definecolor{darkgreen}{rgb}{0.125,0.5,0.169}
\def\scal{0.56}
\begin{figure}[h!]
\hspace*{-1.3cm}
\begin{tabular}{rrr}
$\omega = \frac 3 2 \pi$ & $\omega = \frac 7 4 \pi$ & $\omega = 2 \pi$\\
\begin{tikzpicture}[scale = \scal, font = \large]
	\begin{axis}[
		xlabel={$\ell$},legend style={font=\Large},legend pos=south east,legend cell align=left,
		xtick={2,4,6,8,10}
	]	
	\addplot[domain=2:10,color=black,samples=10, line width=1.2pt] {0.031520615381016};
	\addplot[mark=triangle, mark size=2.5pt,color = darkgreen,only marks] coordinates {
	(2,0.0314635639995       )
	(3, 0.0315066508516      ) 
	(4,  0.031516287833      ) 
	(5,   0.0315188324445    )
	(6,    0.0315196378154   ) 
	(7,     0.0315199256197  ) 
	(8,      0.0315200301303 )};	
	\addplot[domain=1.9:10,color=darkgreen,samples=200] {0.031520615381016 -8.584469496399729e-05*2^(-(x-1)*(2*(2 - 1.333333333333333))};
	\addplot[mark=square, mark size=1.7pt,color = orange, only marks] coordinates {
	(2,	0.0314406753164   	 )
	(3, 0.0315023548935   	) 
	(4, 0.0315154129879   	) 
	(5, 0.0315186188331   	)
	(6,  0.0315195740881  	) 
	(7,   0.0315199041496 	) 
	(8,    0.0315200278811	)};	
	\addplot[domain=1.9:10,color=orange,samples=200] {0.031520896788524 -1.133745027882618e-04*2^(-(x-1)*(2*(2 - 1.333333333333333))};	
	\legend{{$\gamma^\star_{1}$}, $\gamma_{1,h}$ fsolve, $\gamma^\star_{1} + c_f h^{2(2-\lambda_K)}$, $\gamma_{1,h}$ n.-Newton, $\gamma^\star_{1} + c_N h^{2(2-\lambda_K)}$}
	\end{axis}
\end{tikzpicture}\hspace*{-0.5cm}
& 
\begin{tikzpicture}[scale = \scal, font = \large]
	\begin{axis}[
		xlabel={$\ell$},
		legend style={font=\Large},legend pos=south east,legend cell align=left,
		xtick={2,4,6,8,10}
	]	
	\addplot[domain=2:10,color=black,samples=10, line width=1.2pt] {0.073696439582743};
	\addplot[mark=triangle, mark size=2.5pt,color = darkgreen,only marks] coordinates {
	(2,		0.07364652219  	 )
	(3, 	0.0736875962101	) 
	(4, 	0.0736945463527	) 
	(5, 	0.0736958000826	)
	(6, 	0.0736960589783	) 
	(7, 	0.0736961208289	) 
	(8, 	0.0736961375651	)};	
	\addplot[domain=1.9:10,color=darkgreen,samples=200] {0.073696439582743 -9.273036178016895e-05*2^(-(x-1)*(2*(2 - 1.142857142857143))};	
	\addplot[mark=square, mark size=1.7pt,color = orange, only marks] coordinates {
	(2,	0.0736163727836		 )
	(3,	0.0736828100025		) 
	(4,	0.0736938012501		) 
	(5,	0.0736956748593		)
	(6,	0.0736960349641		) 
	(7,	0.0736961154123		) 
	(8,	0.073696136512 		)};	
	\addplot[domain=1.9:10,color=orange,samples=200] {0.073696649201095 -1.447593013745391e-04*2^(-(x-1)*(2*(2 - 1.142857142857143))};	
	\legend{{$\gamma^\star_{1}$}, $\gamma_{1,h}$ fsolve, $\gamma^\star_{1} + c_f h^{2(2-\lambda_K)}$, $\gamma_{1,h}$ n.-Newton, $\gamma^\star_{1} + c_N h^{2(2-\lambda_K)}$}
	\end{axis}
\end{tikzpicture}\hspace*{-0.5cm}
&
\begin{tikzpicture}[scale = \scal, font = \large]
	\begin{axis}[
		xlabel={$\ell$}, legend style={font=\Large},legend pos=south east,legend cell align=left,
		xtick={2,4,6,8,10}
	]	
	\addplot[domain=2:10,color=black,samples=10, line width=1.2pt] {0.130969600401589};	
	\addplot[mark=triangle, mark size=2.5pt,color = darkgreen,only marks] coordinates {
	(2,	0.130954303116		 )
	(3, 0.130967895495		) 
	(4, 0.13096938478 		) 
	(5, 0.130969574523		)
	(6, 0.130969598322		) 
	(7, 0.130969601258		) 
	(8, 0.130969599238		)};	
	\addplot[domain=1.9:10,color=darkgreen,samples=200] {0.130969600401589 -1.091328543371559e-04*2^(-(x-1)*(2*(2 - 0.50))};
	\addplot[mark=square, mark size=1.7pt,color = orange, only marks] coordinates {
	(2,	0.130956159539		 )
	(3,	0.130966414131		) 
	(4,	0.130969195048		) 
	(5,	0.130969550725		)
	(6,	0.130969595336		) 
	(7,	0.13096960172 		) 
	(8,	0.130969606332		)};	
	\addplot[domain=1.9:9,color=orange,samples=200] {0.130969601103628 -2.040272452387742e-04*2^(-(x-1)*(2*(2 - 0.50))};
	\legend{{$\gamma^\star_{1}$}, $\gamma_{1,h}$ fsolve, $\gamma^\star_{1} + c_f h^{2(2-\lambda_K)}$, $\gamma_{1,h}$ n.-Newton, $\gamma^\star_{1} + c_N h^{2(2-\lambda_K)}$}
	\end{axis}
\end{tikzpicture}
\\
\begin{tikzpicture}[scale = \scal, font = \large]
	\begin{axis}[
		xlabel={$\ell$},legend style={font=\Large},legend pos=north east,legend cell align=left,
		xtick={2,4,6,8,10}
	]	 
	\addplot[domain=2:10,color=black,samples=10, line width=1.2pt] {-0.005533930763362};  
	\addplot[mark=triangle, mark size=2.5pt,color = darkgreen,only marks] coordinates {
	(2,	-0.00551373272898	 )
	(3, -0.00552773593053	) 
	(4, -0.00553171927646	) 
	(5, -0.00553303934889	)
	(6, -0.00553352256723	) 
	(7, -0.00553370827677	) 
	(8, -0.00553377784001	)};	
	\addplot[domain=1.9:10,color=darkgreen,samples=200] {-0.005533930763362 +3.876054338058586e-05*2^(-(x-1)*(2*(2 - 1.333333333333333))};
	\addplot[mark=square, mark size=1.7pt,color = orange, only marks] coordinates {
	(2,		-0.00550823816954   	 )
	(3,		-0.0055265074879    	) 
	(4,		-0.00553139508916   	) 
	(5,		-0.00553293679298   	)
	(6,		-0.00553348592984   	) 
	(7,		-0.00553369472495   	) 
	(8,		-0.0055337764918		)};		
	\addplot[domain=1.9:10,color=orange,samples=200] {-0.005533986868577 +4.661679370359960e-05*2^(-(x-1)*(2*(2 - 1.333333333333333))};
	\legend{{$\gamma^\star_{2}$}, $\gamma_{2h}$ fsolve, $\gamma^\star_{2} + c_f h^{2(2-\lambda_K)}$, $\gamma_{2,h}$ n.-Newton, $\gamma^\star_{2} + c_N h^{2(2-\lambda_K)}$}
	\end{axis}
\end{tikzpicture} \hspace*{-0.5cm}
& 
\begin{tikzpicture}[scale = \scal, font = \large]
	\begin{axis}[
		xlabel={$\ell$},legend style={font=\Large},legend pos=north east,legend cell align=left,
		xtick={2,4,6,8,10}
	]		
	\addplot[domain=2:10,color=black,samples=10, line width=1.2pt] {-0.019675729648267};
	\addplot[mark=triangle, mark size=2.5pt,color = darkgreen,only marks] coordinates {
	(2,		-0.0196558894622 	 )
	(3, 	-0.0196717113879   	) 
	(4, 	-0.0196747335785 	) 
	(5, 	-0.0196754080983 	)
	(6, 	-0.0196755787023 	) 
	(7, 	-0.0196756259051 	) 
	(8, 	 -0.019675639849 	)};	
	\addplot[domain=1.9:10,color=darkgreen,samples=200] {-0.019675729648267 +4.255653639421613e-05*2^(-(x-1)*(2*(2 - 1.142857142857143))};
	\addplot[mark=square, mark size=1.7pt,color = orange, only marks] coordinates {
	(2,	-0.0196462800991		 )
	(3,	-0.0196700853315		) 
	(4,	-0.0196744403256		) 
	(5,	-0.0196753475592		)
	(6,	-0.0196755642454		) 
	(7,	-0.019675621995 		) 
	(8,	-0.0196756390248		)};		
	\addplot[domain=1.9:10,color=orange,samples=200] {-0.019675789092759 +6.020516458071600e-05*2^(-(x-1)*(2*(2 - 1.142857142857143))};
	\legend{{$\gamma^\star_{2}$}, $\gamma_{2h}$ fsolve, $\gamma^\star_{2} + c_f h^{2(2-\lambda_K)}$, $\gamma_{2,h}$ n.-Newton, $\gamma^\star_{2} + c_N h^{2(2-\lambda_K)}$}
	\end{axis}
\end{tikzpicture} \hspace*{-0.5cm}
&
\begin{tikzpicture}[scale = \scal, font = \large]
	\begin{axis}[
		xlabel={$\ell$},legend style={font=\Large},legend pos=north east,legend cell align=left,
		xtick={2,4,6,8,10}
	]	    
	\addplot[domain=2:10,color=black,samples=10, line width=1.2pt] {-0.047665528247295};
	\addplot[mark=triangle, mark size=2.5pt,color = darkgreen,only marks] coordinates {
	(2,	-0.0476406805185 		 )
	(3, -0.0476648423477 		) 
	(4, -0.0476654428402 		) 
	(5, -0.0476655181513 		)
	(6,  -0.0476655275966		) 
	(7,  -0.0476655287291		) 
	(8,  -0.0476655259956		)};	
	\addplot[domain=1.9:10,color=darkgreen,samples=200] {-0.047665528247295 +4.389427059564194e-05*2^(-(x-1)*(2*(2 - 0.50))};
	\addplot[mark=square, mark size=1.7pt,color = orange, only marks] coordinates {
	(2,		-0.0476351725769	 )
	(3,		-0.0476642512322	) 
	(4,		-0.0476653675416	) 
	(5,		-0.0476655087072	)
	(6,		-0.0476655264055	) 
	(7,		-0.0476655295828	) 
	(8,		-0.0476655345907	)};		 
	\addplot[domain=1.9:10,color=orange,samples=200] {-0.047665529977951 +8.186154374798488e-05*2^(-(x-1)*(2*(2 - 0.50))};
	\legend{{$\gamma^\star_{2}$}, $\gamma_{2h}$ fsolve, $\gamma^\star_{2} + c_f h^{2(2-\lambda_K)}$, $\gamma_{2,h}$ n.-Newton, $\gamma^\star_{2} + c_N h^{2(2-\lambda_K)}$}
	\end{axis}
\end{tikzpicture}
\end{tabular}
\vspace*{-0.3cm}
\caption{Best fits of $\gamma_{i,h}$ obtained by {\tt fsolve} and the nested-Newton scheme, with respect to the theoretical expected rates. First row: fits of $\gamma_1^\star$; Second row: fits of $\gamma_2^\star$; different re-entrant corners are shown from the left to the right}
\label{fig:gamme_calc_O2}
\end{figure}
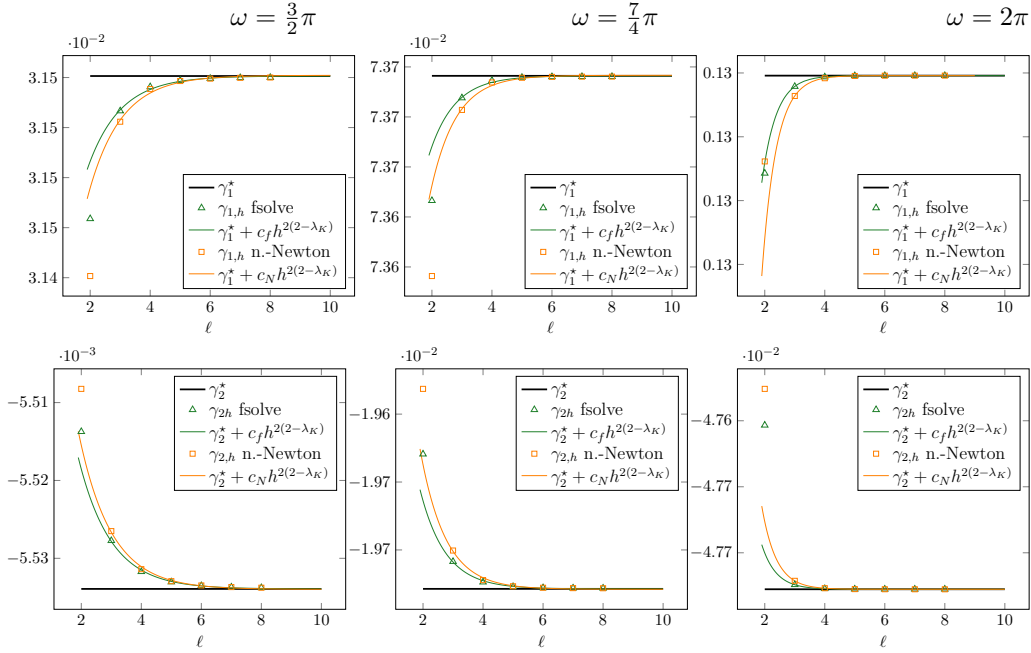

\subsubsection{$\gamma^\star$ fitting and the Newton method}
An important part of the energy- correction method is an accurate approximation of the correction-parameter $\gam^\star=(\gamma_1^\star,\dots,\gamma_K^\star)$, as a limit vector of the series $\{\gam_h\}_{h>0}$. In our study, we use two different possibilities to estimate $\gam_h$. The first one is purely based on existing FEniCS routines \cite{AlnaesBlechta2015a} and uses {\tt fsolve} to compute an approximation of $\gam_h$ as the root of an implicitly given function. The second one uses the nested-Newton method of \cite{Ruede2014} adapted to the higher order case. The implementation of this method in FEniCS is in comparison to the {\tt fsolve} routine much cheaper in terms of computation time.

With the obtained $\gam_h$, a fit is performed in MATLAB \cite{Matlab2015} for the approximation of $\gam^\star$ as the limit value of $\gam_h, h\rightarrow 0$, by exploiting the asymptotic
\begin{equation}
	\gam^\star + c \, h^{2(k-\lambda_K)},
\end{equation} 
where $c$ is a constant, differing for the fits to data obtained by {\tt fsolve} ($c_f$) or by the nested Newton ($c_N$). At this point it should be noted that the calculated $\gam^\star$ slightly differs depending on the underlying fit data.

\cref{fig:gamme_calc_O2} depicts the resulting fits of $\gamma_i^\star$ for both {\tt fsolve} and nested Newton method. We also show the results for all three basic scenarios of the re-entrant corners, $\omega =3/2\pi$, $7/4\pi$ and $2\pi$. In the upper row of the figure, we present fits for the first component of $\gam^\star$, in the lower row, the fits for the second one.
For both methods, we obtain the same convergence rate for the correction parameters. Both methods tend, as it is expected from our theory, to the same $\gam^\star$. In our case the above mentioned difference between the approximated $\gam^\star$ is less than $10^{-6}$. For our further calculations, we use the $\gam^\star$ obtained by the {\tt fsolve} routine. To obtain the same convergence results, it would also be possible to use $\gam^\star$ obtained with by the nested-Newton method, or alternatively also the first six digits of if it. 

%------------------------------------------------------------------------------------------------------------------------
%------------------------------------------------------------------------------------------------------------------------
\subsubsection{Convergence rates}
With the asymptotic values $\gam^\star$, we illustrate in \cref{tab: L_Shape_ring_rates} and \ref{tab:angle_shape_ring_rates} the convergence rates with and without energy-correction. Our numerical results are in excellent agreement with the theory, see \cref{mainthm}. The errors are measured for both, the $L^2-$ and the $L^2_{\alpha}-$ norms. In this part, we focus on the case of $\omega = 3/2\pi$ and $\omega = 7/4\pi$. In comparison with the linear approximations, as studied in \cite{Egger2014}, the error in weighted gradient norms for the second order has also a reduced convergence order. Therefore, we present also the convergence rates for the gradient, see \cref{tab: L_Shape_ring_rates} and~\ref{tab:angle_shape_ring_rates}.

\hspace*{-2cm}
\begin{table}[ht]
	\begin{center}
	{\small 
	\begin{tabular}{|c||c|c|c|c||c|c|c|c|}\hline
	&\multicolumn{4}{c||}{$\gam=(0,0)^\top$}&\multicolumn{4}{|c|}{$\gam=\gam^\star=(0.031521,-0.005534)^{\!\top}$} \\\hline \hline
	$\ell$     &  $L^2$ &  rate     & $L^2_\alpha$   &    rate &  $L^2$ &  rate     & $L^2_{\alpha}$   &    rate     \\ \hline
	2    & 7.0709e--03&    -     & 1.9465e--03&    -    &5.6222e--03 &   -    &   1.8330e--03&   -     	  \\
	3    &	2.7238e--03&  1.38  & 6.1921e--04&  1.65&1.5194e--03 & 1.89&	1.8777e--04&  3.29  \\
	4    & 1.0452e--03&  1.38 & 2.3725e--04&  1.38&4.4752e--04 & 1.76&	1.8210e--05&  3.37  \\
	5    &	4.0414e--04&  1.37 &  9.3743e--05&  1.34&1.3749e--04 & 1.70&	1.9942e--06&  3.19  \\
	6    &	1.5751e--04&  1.36 &  3.7180e--05&  1.33&4.2897e--05 & 1.68&	2.3592e--07&  3.08  \\ 
	7    &	6.1766e--05&  1.35 &  1.4753e--05&  1.33&1.3461e--05 & 1.67&	2.8883e--08&  3.03  \\ 
	8    &	2.4323e--05&  1.34 &  5.8546e--06&  1.33&4.2339e--06 & 1.67&	3.5837e--09&  3.01  \\  \hline 
	$\ell$     &  $H^1$ &  rate     & $H^1_{\alpha}$   &    rate  &  $H^1$ &  rate     & $H^1_{\alpha}$ &    rate     \\ \hline
	2    &		3.8901e--02&  -      &	    1.2469e--02&    -    & 4.3263e--02&    -   	& 1.6682e--02 &   -   	    	\\
	3    &  	2.4482e--02&  0.67	 &		3.6263e--03&  1.78& 2.4334e--02&  0.83	& 3.2480e--03 & 2.36		\\
	4    &  	1.5461e--02&  0.66	 &		1.2070e--03&  1.59& 1.4871e--02&  0.71	& 7.3695e--04 & 2.14		\\
	5    &  	9.7522e--03&  0.66	 &		4.4090e--04&  1.45& 9.2795e--03&  0.68	& 1.7799e--04 & 2.05		\\
	6    &  	6.1468e--03&  0.67	 &		1.6891e--04&  1.38& 5.8257e--03&  0.67	& 4.3872e--05 & 2.02		\\ 
	7	   &  	3.8731e--03&  0.67	 &		6.6080e--05&  1.35& 3.6651e--03&  0.67	& 1.0900e--05 & 2.01		\\
	8	   &  	2.4401e--03&  0.67	 &		2.6074e--05&  1.34& 2.3077e--03&  0.67	& 2.7175e--06 & 2.00		\\ \hline
	\end{tabular}}
	\label{tab: L_Shape_ring_rates}
	\caption{ $\omega = 3/2\pi$: Errors and convergence rates of the approximative solution and the gradients. Left columns: $u-u_h$, right columns: $u-\uhm$ in standard $L^2$-norm and weighted $L^2_\alpha$ with $\alpha=1.3333$. Layer-modification $c_h^R(\cdot,\cdot)$, $k=2$.}
	\end{center}
	\vspace*{-0.5cm}
\end{table}

As one can see, standard finite elements exhibit $2\lambda_1$ and $\lambda_1$ convergence rates for the $L^2$-error in the solution and the gradient, respectively. Furthermore, optimal rates $3$ and $2$ in  the weighted $L^2_\alpha$-norm can be achieved using energy-correction. Note also that a simple change of the error-norm in the standard scheme does not qualitatively increase the convergence rates of the approximation error $u-\uh$.
\begin{table}[ht]
	\begin{center}{\small
	\begin{tabular}{|c||c|c|c|c||c|c|c|c|}\hline
	&\multicolumn{4}{c||}{$\gam=(0,0)^\top$}&\multicolumn{4}{|c|}{$\gam=\gam^\star=(0.073696, -0.019675)^{\!\top}$} \\\hline \hline
	$\ell$     &  $L^2$ &  rate     & $L^2_\alpha$   &    rate &  $L^2$ &  rate     & $L^2_{\alpha}$   &    rate     \\ \hline
	2    & 1.4688e--02 &   -     	&  4.0231e--03 &   -    	& 	1.4058e--02 &   -   		&	5.5315e--03 &   -   			\\
	3    & 6.5058e--03 & 1.17 	&	1.5652e--03 & 1.36	&	3.5969e--03 & 1.97	&	6.4029e--04 & 3.11			\\
	4    & 2.8727e--03 & 1.18 	&	6.9520e--04 & 1.17	&	1.0173e--03 & 1.82	&	6.0880e--05 & 3.39			\\
	5    & 1.2781e--03 & 1.17 	&	3.1399e--04 & 1.15	&	3.1468e--04 & 1.69	&	6.0667e--06 & 3.33			\\
	6    & 5.7257e--04 & 1.16 	&	1.4210e--04 & 1.14	&	1.0189e--04 & 1.63	&	6.4267e--07 & 3.24			\\ 
	7    & 2.5768e--04 & 1.15 	&	6.4335e--05 & 1.14	&	3.3693e--05 & 1.60	&	7.1739e--08 & 3.16			\\ 
	8	  & 1.1628e--04 & 1.15  & 	2.9132e--05 & 1.14	&	1.1248e--05 & 1.58	&	8.3485e--09 & 3.10		\\ \hline	
	$\ell$     &  $H^1$ &  rate     & $H^1_{\alpha}$   &    rate  &  $H^1$ &  rate     & $H^1_{\alpha}$ &    rate     \\ \hline
	2& 7.2533e--02&    -        &   2.1898e--02&    -    & 	9.7586e--02&    -      & 	4.2072e--02&    -         \\
	3& 4.9699e--02&  0.55    &   7.7725e--03&  1.49&    5.3057e--02&  0.88  &   7.3131e--03&  2.52		\\ 
	4& 3.3713e--02&  0.56    &   3.1786e--03&  1.29&    3.3087e--02&  0.68  &   1.4913e--03&  2.29		\\
	5& 2.2769e--02&  0.57    &   1.3896e--03&  1.19&    2.1653e--02&  0.61  &   3.4514e--04&  2.11		\\
	6& 1.5347e--02&  0.57    &   6.2221e--04&  1.16&    1.4407e--02&  0.59  &   8.3736e--05&  2.04		\\
	7& 1.0335e--02&  0.57    &   2.8078e--04&  1.15&    9.6473e--03&  0.58  &   2.0675e--05&  2.02		\\
	8& 6.9573e--03&  0.57    &   1.2701e--04&  1.14&    6.4780e--03&  0.57  &   5.1410e--06&  2.01		\\ \hline
 \end{tabular}	}
\label{tab:angle_shape_ring_rates}
	\end{center}
	\caption{$\omega = 7/4\pi$:  Errors and convergence rates of the approximative solution and the gradients. Left columns: $u-u_h$, right columns: $u-\uhm$ in standard $L^2$-norm and weighted $L^2_\alpha$ with $\alpha=1.3333$. Layer-modification $c_h^R(\cdot,\cdot)$, $k=2$.}
		\vspace*{-0.5cm}
\end{table}

%------------------------------------------------------------------------------------------------------------------------
%------------------------------------------------------------------------------------------------------------------------
\subsubsection{Mesh requirements: $i\neq j$ pairing in \cref{aprconvstr2}}
 The proof of \cref{l:aprconv2} is based on the \cref{U}-assumption.
Here we study the mesh properties in more detail. Firstly we observe that the full symmetry (G3) is a too strong assumption for \cref{assint}, and it can be relaxed to more general cases. For example, classical criss-cross meshes, see the case for $\omega=2\pi$ in \cref{fig:geoms1}, satisfy the interpolation property \cref{assint} as well. In \cref{tab:u3ortho} we show $\int_{\Upsilon} \nabla s_1\cdot \nabla I^2_h s_3 dx$ on different refinement levels, with $\Upsilon$-domains as depicted on the left of the tables.

\begin{table}[h!]
	\begin{tabular}{c}
		\includegraphics[width=0.17\textwidth]{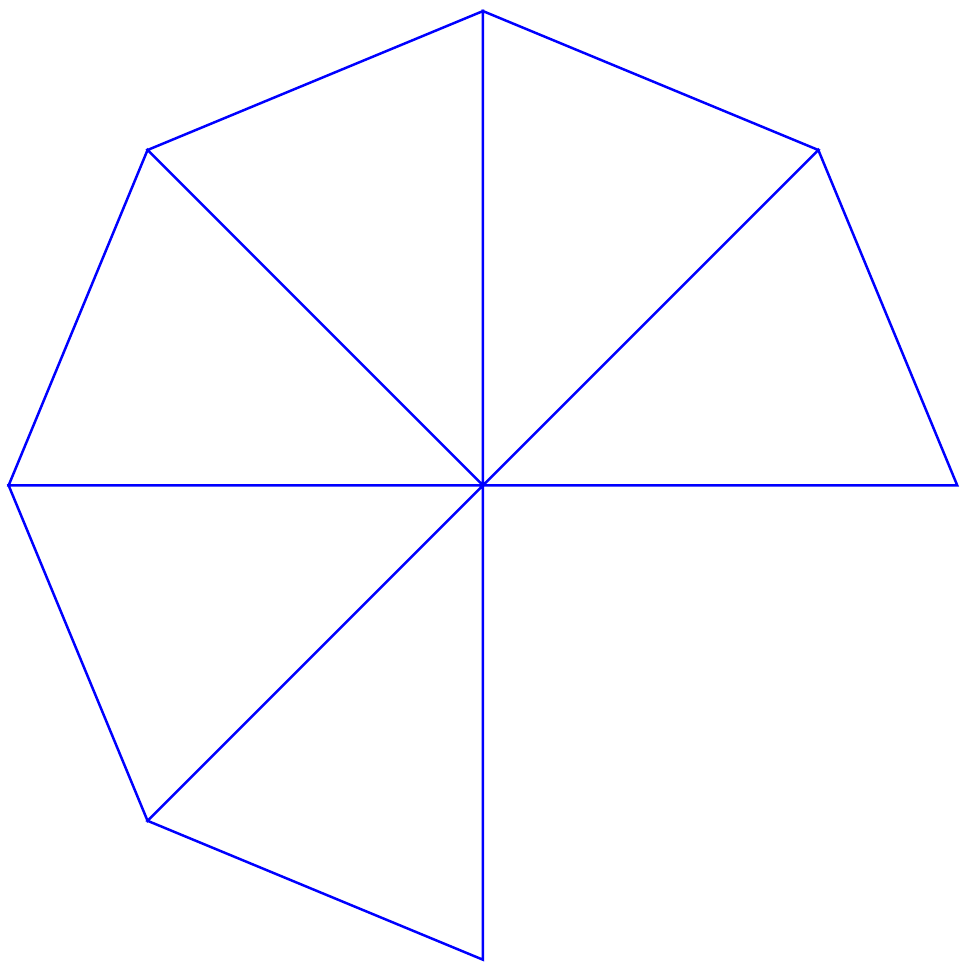}
	\end{tabular}
	\hspace*{-0.2cm}
	\begin{tabular}{|c||c|}\hline 
	$\ell$ & $\Upsilon:$ (G3) mesh      \\ \hline
	1   &    $-$2.636780e--15 \\
	2   &    $-$3.774758e--15 \\
	3   &    $-$3.734860e--15 \\
	4   &    $-$3.631210e--15 \\
	5   &    $-$3.642052e--15 \\ \hline
	\end{tabular}
	\hspace*{0.7cm}
	\begin{tabular}{c}
		\includegraphics[width=0.17\textwidth]{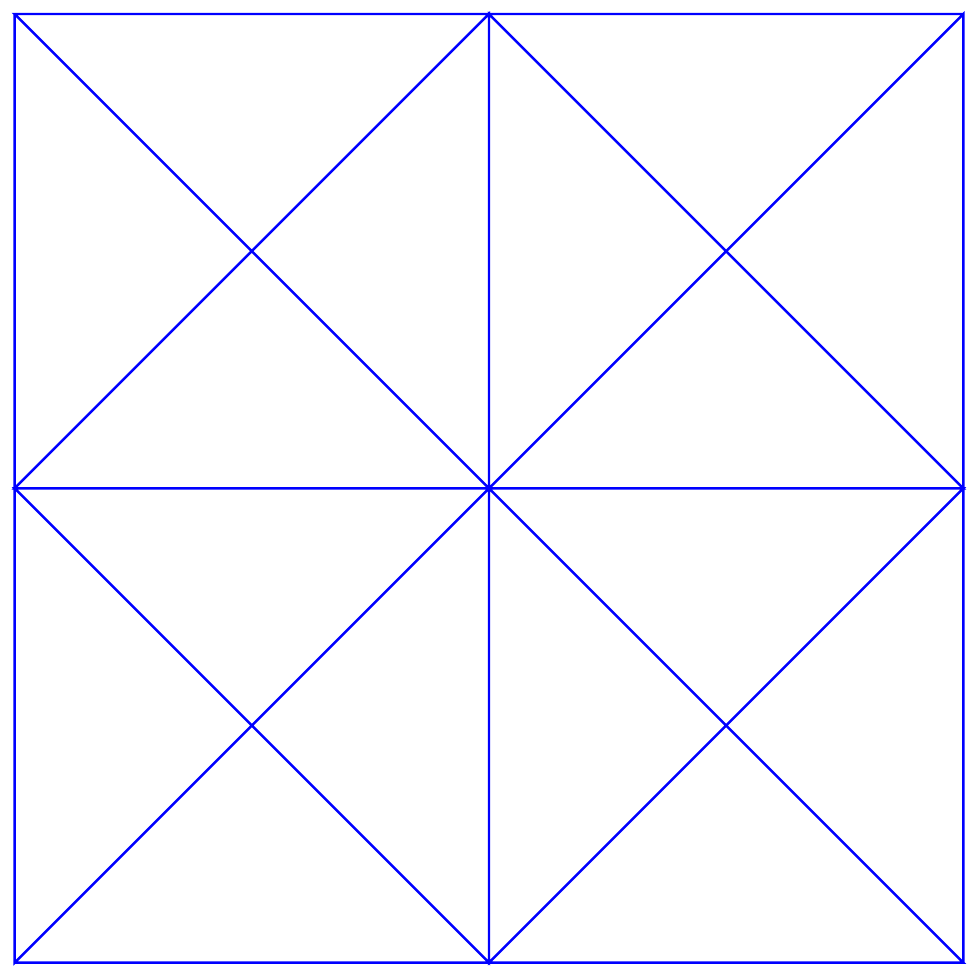}
	\end{tabular}
	\hspace*{-0.2cm}
	\begin{tabular}{|c||c|}\hline
	$\ell $& $\Upsilon:$  criss-cross   \\ \hline  
	1 &    $-$2.636780e--15    \\
	2 &    $-$3.774758e--15    \\
	3 &    $-$3.734860e--15    \\
	4 &    $-$3.631210e--15    \\
	5 &    $-$3.642052e--15    \\ \hline
	\end{tabular}
	\vspace*{0.1cm}
	\label{tab:u3ortho}
		\caption{Values of $\int_{\Upsilon} \nabla s_1\cdot \nabla I^2_h s_3 \,dx$ for the depicted meshes for different refinement levels.}
\end{table}

Secondly, we illustrate the role of the mesh symmetry. Hence, we perform two types of computations, first, using a fully non-symmetric mesh, and a mesh which is locally mirror-symmetric but not satisfying the (G3)-property, see the meshes on the left side of \cref{tab:nonuni_pairs}, for the case of $k=2$ and $\omega=7/4\pi>4/3\pi$. Thus for this combination of $k$ and $\omega$ the \cref{U}-property of the mesh is not satisfied. Again, since we consider the second order approximations, we require for the optimal convergence rates of the solution, i.e., $\|u-\uhm\|_{0,\alpha} = \mathcal{O}(h^3)$, the convergence rate in $\tilde g_h(s_i,s_j)$ to be also $O(h^3)$ for both pairing, $(s_1, s_2)$ and $(s_1, s_3)$, see \cref{F:dist}.

\begin{table}[ht]
	\begin{center}
   \begin{tabular}{c}
		\includegraphics[width=0.2\textwidth]{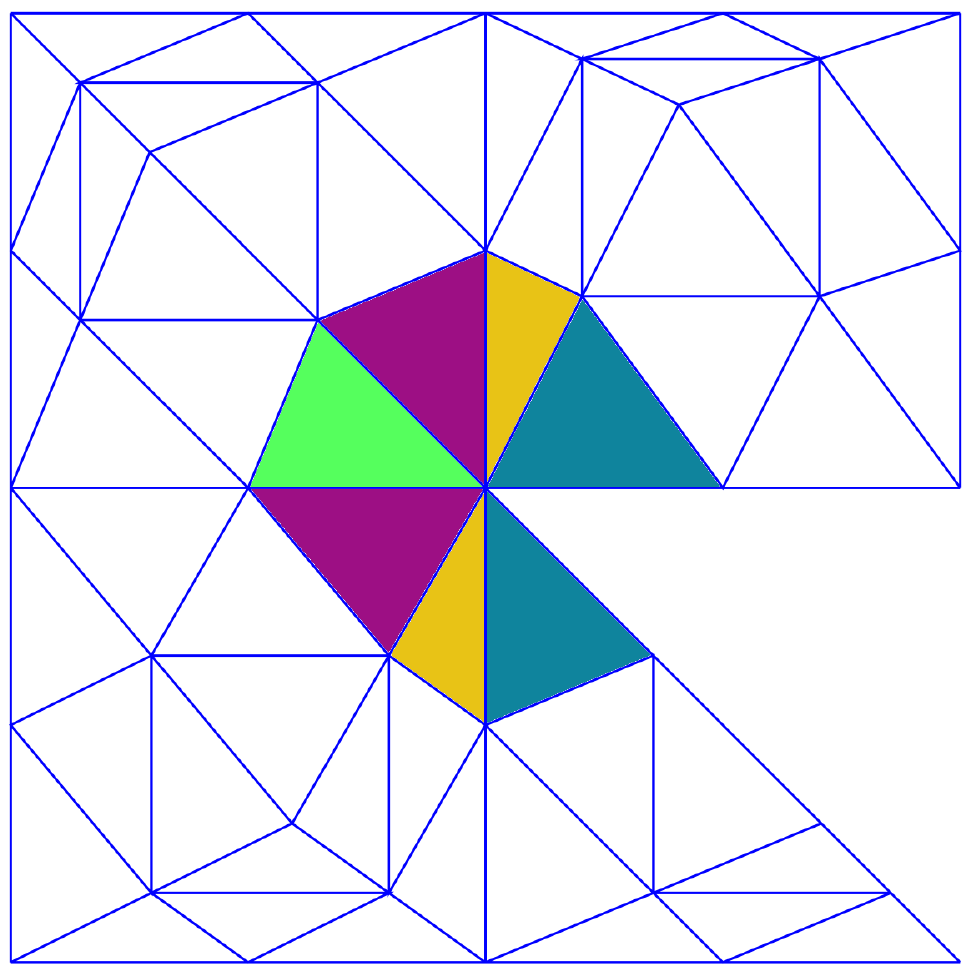}
	\end{tabular}
	\hspace*{0.8cm}
	\begin{tabular}{|c||c|c|c|c|}\hline
	&\multicolumn{4}{c|}{(G1) non-symmetric} \\ \hline
	$\ell$ & $\tilde{g}_h(s_1,s_2)$   &    rate &  $\tilde{g}_h(s_1,s_3)$ &  rate   \\ \hline
	2& 	   1.7836e--04&   --  &	1.9118e--04&   --  								\\
	3&	   5.3969e--05&  1.72	&	3.8239e--05&  2.32 						\\
	4&	   1.6437e--05&  1.72	&	7.8252e--06&  2.29 						\\
	5&	   5.0081e--06&  1.71	&	1.6040e--06&  2.29 						\\
	6&	   1.5260e--06&  1.71 &	3.2887e--07&  2.29 					\\
	7&	   4.6503e--07&  1.71	&	6.7436e--08&  2.29 						\\
	8&     1.4172e--07&  1.71 &	1.3829e--08&  2.29 					\\ \hline
	\end{tabular}\\[1em]
  \begin{tabular}{c}
		\includegraphics[width=0.2\textwidth]{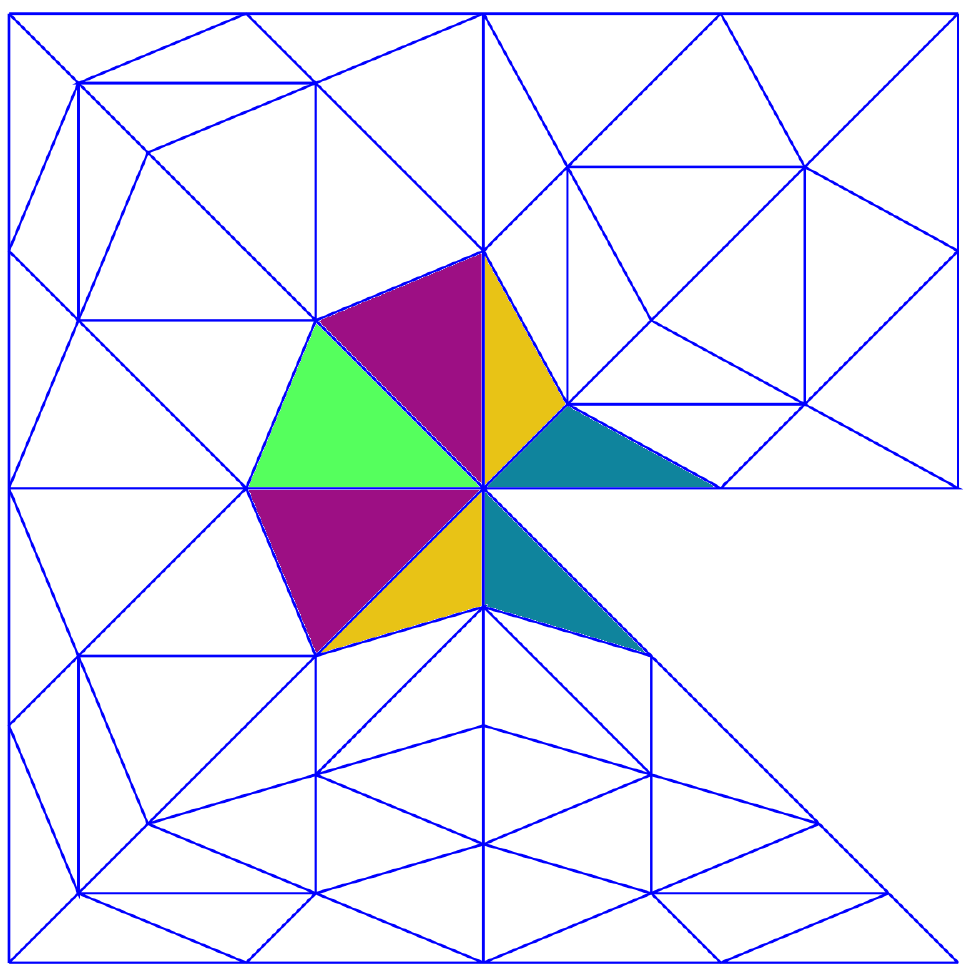}
	\end{tabular}	
	\hspace*{0.8cm}
		\begin{tabular}{|c||c|c|c|c|}\hline
	&\multicolumn{4}{c|}{(G2) mirror-symmetric} \\ \hline
	$\ell$ &  $\tilde{g}_h(s_1,s_2)$   &    rate &  $\tilde{g}_h(s_1,s_3)$ &  rate       \\ \hline
	2& 	    3.4985e--06&  --  	 & 1.3550e--04&  --    \\
	3&	  	3.6461e--07&  3.26	 & 2.8173e--05&  2.27  \\
	4&	  	2.3000e--08&  3.99	 & 5.7796e--06&  2.29  \\
	5&	  	1.4154e--09&  4.02	 & 1.1833e--06&  2.29  \\
	6&	    8.7613e--11&  4.01	 & 2.4236e--07&  2.29  \\
	7&	    5.4420e--12&  4.01	 & 4.9666e--08&  2.29  \\
	8&      3.5838e--13&  3.92	 & 1.0181e--08&  2.29  \\ \hline
	\end{tabular}
	\end{center}
	\label{tab:nonuni_pairs} 
			\caption{Two examples of $\omega = 7/4\pi$ meshes which do not satisfy (G3), but the property (G1) for the first row and the property (G2) for the second row. }
\end{table}

In \cref{tab:nonuni_pairs}, we present the numerical results  computed on the above mentioned meshes demonstrating that (G1) or (G2) properties of the local mesh are not sufficient. As one can see in the case of a locally mirror-symmetric mesh, the necessary condition on $\tilde g_h(s_i,s_j)$ is violated only for the $(s_1,s_3)$ pairing, while the $(s_1,s_2)$ pairing has even higher convergence order than we require. This comes from the fact, that $s_1$ is symmetric and $s_2$ is anti-symmetric in the $\theta$-direction, and thus, the integral of the product of their gradients on mirror-symmetric mesh vanishes. The violation of the local symmetry of the mesh then results that also this pairing does not exhibit the required convergence rate.

%------------------------------------------------------------------------------------------------------------------------
%------------------------------------------------------------------------------------------------------------------------
\subsection{Layer-modification $c_h^R(\cdot,\cdot)$: $k=3$, $k=4$}\label{ss:thirdfourth}
In this section, we present results for the higher order methods, i.e., the third and fourth approximations, and concentrate on two re-entrant corner cases: $3/2\pi$ (\cref{tab:3pi2k34}) and  $2\pi$ (\cref{tab:2pi1k34}).

\begin{table}[h]
	\begin{center}{\small
	\begin{tabular}{|c||c|c|c|c||c|c|c|c|}\hline
	&\multicolumn{4}{c||}{$k = 3$}&\multicolumn{4}{|c|}{$k = 4$} \\\hline \hline
	$\ell$&  $\| u-u_h \|_{0,\alpha}\!\!$ &  rate  & $\| u-\uhm \|_{0,\alpha}\!\!$ & rate & $\| u-u_h \|_{0,\alpha}\!\!$ & rate & $\| u-\uhm \|_{0,\alpha}\!\!$ & rate \\ \hline
	2    &    3.6458e--04&  -    &4.7531e--04&  -		& 1.4972e--04&  -    & 	4.3849e--04 &  -   	\\
	3    &	  1.4914e--04&  1.29 &3.5013e--05&  3.76& 5.6715e--05&  1.40 & 	1.4127e--05 &  4.96	\\
	4    &  	5.9232e--05&  1.33 &1.9895e--06&  4.14& 2.2495e--05&  1.33 & 	2.9706e--07 &  5.57	\\
	5    &		2.3504e--05&  1.33 &1.1977e--07&  4.05& 8.9263e--06&  1.33 & 	7.2456e--09 &  5.36	\\
	6    &	  9.3270e--06&  1.33 &7.4012e--09&  4.02& 3.5423e--06&  1.33 & 	1.9010e--10 &  5.25	\\ 
	7    &	 	3.7013e--06&  1.33 &4.5990e--10&  4.01&            &       &              &      	\\  \hline
	\end{tabular}                                                           }
	\label{tab:3pi2k34}
		\caption{$\omega = 3/2\pi$: Errors and convergence rates with ($\uhm$) and without energy-correction ($u_h$), for $k=3,4$, and $\alpha_3=2.3333$, $\alpha_4=3.3333$. Layer-modification $c_h^R(\cdot,\cdot)$ with correction parameters for third ($\gam_3$) and fourth ($\gam_4$) order.\newline
	$\gam_3=(0.0128913716057252,-0.00236670558729706)^\top$\newline
	\mbox{$\gam_4=(0.00770287127448,-0.00247845692351,0.000451799729448)^\top$.}}
\end{center}
\end{table}
\begin{table}[ht]
\begin{center}{\small
	\begin{tabular}{|c||c|c|c|c||c|c|c|c|}\hline
	&\multicolumn{4}{c||}{$k = 3$}&\multicolumn{4}{|c|}{$k = 4$} \\\hline \hline
	$\ell$&  $\| u-u_h \|_{0,\alpha_3}\!\!\!\!$ &  rate  & $\| u-\uhm \|_{0,\alpha_3}\!\!\!\!$ & rate & $\| u-u_h \|_{0,\alpha_4}\!\!\!\!$ & rate & $\| u-\uhm \|_{0,\alpha_4}\!\!\!\!$ & rate \\ \hline
	2    &	1.6938e--03 & -    	&	  7.3218e--03 & -   	&	7.8234e--04 &  -  &	8.2906e--03 & -    \\
	3    &	8.4536e--04 & 1.00 	&	  4.0724e--04 & 4.17	&	3.8899e--04 & 1.01&	5.0596e--04 & 4.03 \\
	4    &	4.2165e--04 & 1.00 	&	  1.9310e--05 & 4.40	&	1.9423e--04 & 1.00&	9.1646e--06 & 5.79 \\
	5    &	2.1059e--04 & 1.00 	&	  1.0565e--06 & 4.19	&	9.7052e--05 & 1.00&	2.0680e--07 & 5.47 \\
	6    &	1.0524e--04 & 1.00 	&	  6.2341e--08 & 4.08	&	4.8510e--05 & 1.00&	6.2929e--09 & 5.04 \\
	7    &	5.2604e--05 & 1.00 	&	  3.7994e--09 & 4.04	&             &     &             &      \\ \hline
	\end{tabular}}
	\label{tab:2pi1k34}
	\caption{$\omega = 2\pi$: Errors and convergence rates with ($\uhm$) and without energy-correction ($u_h$), for $k=3,k=4$, and $\alpha_3=2.5$, $\alpha_4=3.5$. Layer-modification $c_h^R(\cdot,\cdot)$ with correction parameters for third ($\gam_3$) and fourth ($\gam_4$) order. \newline
	$\gam_3=(0.0837702755930500,-0.0571791329689500,0.01578275592125006)^\top$\newline
	$\gam_4=(0.0593735765703, -0.0603253013792,0.0339890181456,$ $-0.00836417112629)^\top$.}
	\end{center}
\end{table}
\indent In all cases, the optimal rates are obtained in the $L^2_{\alpha}$ norm if a sufficient number of correction parameters is used. Such can be read out from \cref{F:dist}. Note also, that the weight $\alpha$ increases with the approximation order $k$, namely $\alpha =  k-\lambda_1$.

%------------------------------------------------------------------------------------------------------------------------
%------------------------------------------------------------------------------------------------------------------------
\subsection{Function-modification $c_h^F(\cdot,\cdot)$}\label{ss:chF}

As described in \cref{S:gamma}, higher order approximations require an increased number of correction parameters $\gamma_i$, $i=1,\dots,K\leqslant k$. This means that a~priori, the correction by layers $c_h^R(\cdot,\cdot)$ needs a larger support, constructed by $K$-layers of elements around the singular point. To have a possibility to avoid this fact, we have suggested a correction by a function, $c_h^F(\cdot,\cdot)$, see \cref{chF}. Its support is always restricted to a single element layer at the singular points.

We present the asymptotic correction-function $\sum_{i=1}^K \, {\gamma_i^F}^\star \, \hat{r}^{i-1}$ for all the considered mesh and approximation-order settings in \cref{fig:gammas}. Note here the bound of the maximum of the correction-function, guaranteeing the ellipticity of $a_h(\cdot,\cdot)$.

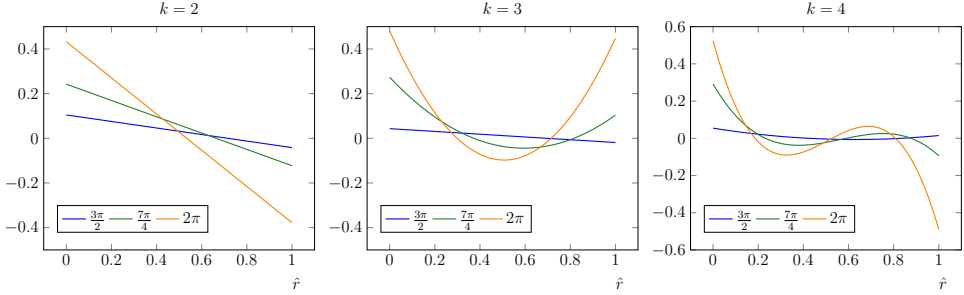
\begin{figure}[]
\begin{center}
\newcommand{\sclf}{0.52}
\begin{tikzpicture}[scale = \sclf,font = \Large]
\definecolor{darkgreen}{rgb}{0.125,0.5,0.169}
	\begin{axis}[
		title={ $k=2$ },
    xlabel style={at={(0.9,-0.1)},anchor=north west},
		xlabel={$\hat{r}$},
		legend style={at={(0.05,0.2)},anchor=north west},legend columns=4,legend cell align=left,
		ymin=-0.5, ymax=0.5,
	]
	\addplot[domain=0:1,color=blue,thick,samples=100] {0.104629368522 -0.146333651229*x};   
	\addplot[domain=0:1,color=darkgreen,thick,samples=100] {0.242541884302 -0.365466359337*x};   
	\addplot[domain=0:1,color=orange,thick,samples=100] {0.432790956361 -0.810825080057*x};   
	\legend{$\frac{3\pi}{2}$,$\frac{7\pi}{4}$,$2\pi$}
	\end{axis}
\end{tikzpicture}
\begin{tikzpicture}[scale = \sclf,font = \Large]
	\begin{axis}[
		title={ $k=3$ },
    xlabel style={at={(0.9,-0.1)},anchor=north west},
		xlabel={$\hat{r}$},
		legend style={at={(0.05,0.2)},anchor=north west},legend columns=4,legend cell align=left,
		ymin=-0.5, ymax=0.5,
	]
	\addplot[domain=0:1,color=blue,thick,samples=100] {0.0430940204369 -0.0619934913075*x};   
	\addplot[domain=0:1,color=darkgreen,thick,samples=100] {0.27331111412 -1.06868776126*x + 0.899221652837*x^2};   
	\addplot[domain=0:1,color=orange,thick,samples=100] {0.481131458404 -2.28146684246*x + 2.24881531841*x^2};   
	\legend{$\frac{3\pi}{2}$,$\frac{7\pi}{4}$,$2\pi$}
	\end{axis}
\end{tikzpicture}
\begin{tikzpicture}[scale = \sclf,font = \Large]
	\begin{axis}[
		title={ $k=4$ },
    xlabel style={at={(0.9,-0.1)},anchor=north west},
		xlabel={$\hat{r}$},
		legend style={at={(0.05,0.2)},anchor=north west},legend columns=4,legend cell align=left,
		ymin=-0.6, ymax=0.6,
	]
	\addplot[domain=0:1,color=blue,thick,samples=100] {0.0545692518571 -0.194380081181*x + 0.154642913842*x^2};   
	\addplot[domain=0:1,color=darkgreen,thick,samples=100] {0.291964004854 -2.08641092234*x + 4.1327001074*x^2 -2.43279768432*x^3};   
	\addplot[domain=0:1,color=orange,thick,samples=100] {0.5244101444958560 -4.4685139512732723*x + 10.0848142531165568*x^2 -6.6307445001949796*x^3};   
	\legend{$\frac{3\pi}{2}$,$\frac{7\pi}{4}$,$2\pi$}
	\end{axis}
\end{tikzpicture}
\end{center}
\caption{Asymptotic correction functions $\sum_{i=1}^K \, {\gamma_i^F}^\star \, \hat{r}^{i-1}$ for three different approximation orders and three different angles. These function-corrections were used for the computations presented in \cref{fig:comp}.}
\label{fig:gammas}
\end{figure}

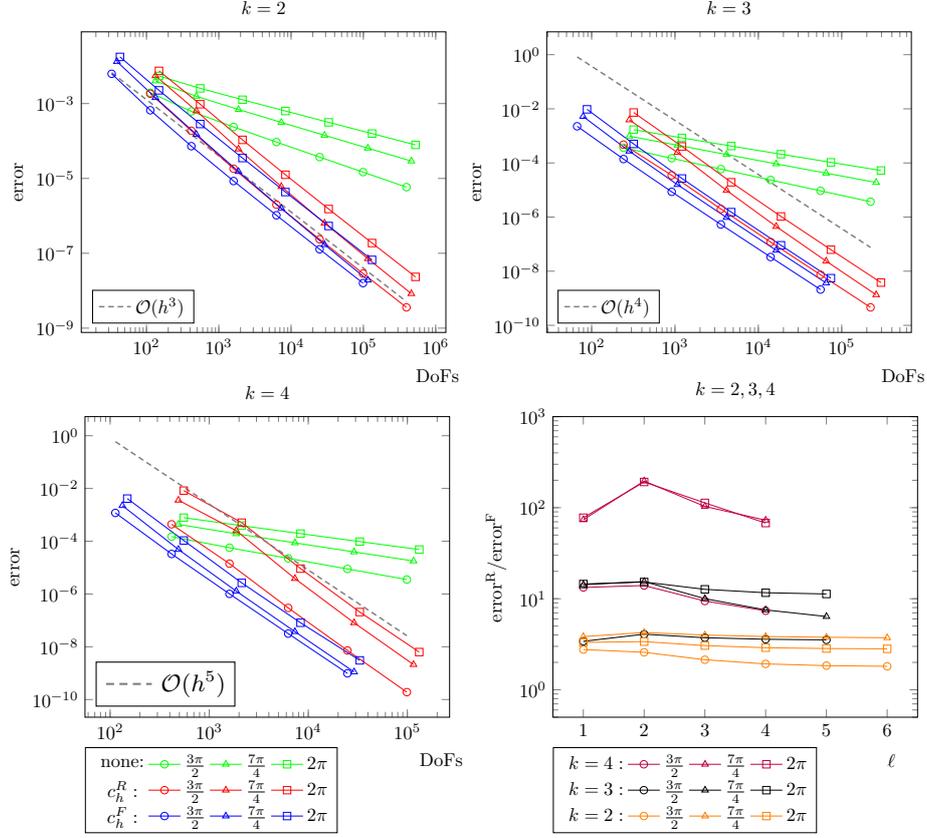
\begin{figure}[h!]
\begin{center}
\begin{tabular}{cc}
\begin{tikzpicture}[scale = 0.7]
		\begin{loglogaxis}[
		 title={$k=2$},
			xlabel={DoFs}, xlabel style={at={(0.9,-0.1)},anchor=north west},
			ylabel={error},
			legend pos=south west
		]			
		\addplot[color = gray, densely dashed, thick] coordinates {
		(     33, 0.006695294459835)
		(394241 , 0.000000005127408)
		};
		\addplot[mark=o, color = green] coordinates {
		( 113   ,    1.946491e-03)
		( 417   ,    6.192110e-04)
		( 1601  ,    2.372484e-04)
		( 6273  ,    9.374264e-05)
		( 24833  , 	 3.717952e-05)
		( 98817  , 	 1.475306e-05)
		( 394241  ,  5.854592e-06)
		};
		\addplot[mark=o, color = red] coordinates {
		( 113   ,   1.832986e-03 )
		( 417   ,   1.877747e-04 )
		( 1601  ,   1.820988e-05 )
		( 6273  ,   1.994188e-06 )
		( 24833 ,   2.359184e-07 )
		( 98817 ,   2.888256e-08 )
		( 394241,   3.583656e-09 )
		};

		\addplot[mark=o, color = blue] coordinates {
		( 33    ,  6.249065e-03     )
		( 113   , 6.625334e-04     )
		( 417   ,7.277360e-05      )
		( 1601  ,8.518649e-06      )
		( 6273  ,1.035930e-06      )
		( 24833  ,1.281105e-07 	  )
		( 98817  ,1.594813e-08 	  )
		};

		\addplot[mark=triangle, color = green] coordinates {
		(133     , 	4.023119e-03 	)
		( 489    ,	   1.565214e-03 		)
		( 1873    ,	 6.951998e-04	)
		( 7329     , 3.139851e-04		)
		( 28993	  ,	 1.420978e-04	)
		( 115329,	 6.433457e-05  	)
		( 460033, 	 2.913151e-05	)
		};

		\addplot[mark=triangle, color = red] coordinates {
		(133     , 	5.531486e-03	)
		( 489    ,	6.402928e-04		)
		( 1873    ,	6.088025e-05	)
		( 7329     ,6.066699e-06		)
		( 28993	  ,	6.426752e-07	)
		( 115329,	7.173956e-08  	)
		( 460033, 	8.349078e-09	)
		};

		\addplot[mark=triangle, color = blue] coordinates {
		(39      , 		1.329756e-02 	)
		(133     , 		1.440387e-03 )
		( 489    ,		1.505234e-04 	)
		( 1873    ,		1.524973e-05 )
		( 7329     ,	1.577991e-06 	)
		( 28993	  ,		1.703569e-07 )
		( 115329,	  	1.924132e-08 )
		};

		\addplot[mark=square, color = green] coordinates {
		( 149,		5.447541e-03				)
		( 553,		2.555285e-03				)
		(2129,		1.266709e-03		 		)
		( 8353,		6.318142e-04				)
		( 33089,	    3.155603e-04					)
		( 131713,	1.576946e-04				)
		(525569	,	7.882600e-05				)
		};

		\addplot[mark=square, color = red] coordinates {
		( 149,			7.484737e-03			)
		( 553,			9.613364e-04			)
		(2129,			1.071559e-04	 		)
		( 8353,			1.253320e-05			)
		( 33089,		    1.522378e-06				)
		( 131713,		1.880485e-07			)
		(525569	,		2.339125e-08			)
		};

		\addplot[mark=square, color = blue] coordinates {
		(43,			1.765647e-02	 	)
		( 149,			2.255566e-03	)
		( 553		,	2.844502e-04		)
		(2129,			3.504530e-05	 )
		( 8353,			4.324484e-06	)
		( 33089,		5.362578e-07		)
		( 131713,		6.673934e-08	)
		};		
		\legend{  $\mathcal{O}(h^{3})$}
		\end{loglogaxis}
	\end{tikzpicture}
\hspace*{-0.5cm}
&
\begin{tikzpicture}[scale = 0.7]
		\begin{loglogaxis}[
		    title={$k=3$},
			xlabel={DoFs}, xlabel style={at={(0.9,-0.1)},anchor=north west},
			ylabel={error},
			legend pos=south west
		]
		\addplot[color = gray, densely dashed, thick] coordinates {
		(     67,  0.829187396351576)
		(221953 , 0.000000075557966)
		};
		\addplot[mark=o, color = green] coordinates {
		(241,  	 3.645809e-04)
		(913, 	 1.491442e-04)
		(3553, 	 5.923211e-05)
		(14017,  2.350386e-05)
		(55681,  9.327001e-06)
		(221953, 3.701337e-06)
		};	
		\addplot[mark=o, color = red] coordinates {
		(241,  	4.753118e-04)
		(913, 	 3.501259e-05)
		(3553, 	 1.989514e-06)
		(14017,  1.197730e-07)
		(55681,   7.401174e-09)
		(221953,  4.599012e-10)
		};
		\addplot[mark=o, color = blue] coordinates {
		(67,     2.265797e-03)
		(241,    1.394305e-04)
		(913,    8.586574e-06)
		(3553,   5.332193e-07)
		(14017,  3.336425e-08)
		(55681,  2.094811e-09)
		};
		\addplot[mark=triangle, color = green] coordinates {
     (283, 9.761375e-04)
		(1069, 4.526332e-04)
		(4153, 2.048523e-04)
		(16369, 9.272159e-05)
		(64993, 4.198005e-05)
		(259009, 1.900910e-05)
		};			
		\addplot[mark=triangle, color = red] coordinates {
		(283,		 3.893841e-03	)
		(1069,		 2.450379e-04	)
		(4153,		 9.673994e-06	)
		( 16369,	 4.490788e-07		)
		( 64993, 	 2.358691e-08		)
		( 259009,  	 1.345556e-09		)
		};
		\addplot[mark=triangle, color = blue] coordinates {
		( 79,      5.218032e-03         )
		(283,		 2.726081e-04)
		(1069,		 1.586614e-05)
		(4153,		 9.654047e-07)
		( 16369,	 5.958160e-08	)
		( 64993, 	 3.695316e-09	)
		};				
		\addplot[mark=square, color = green] coordinates {
		(319,		1.693843e-03 	)
		(1213,		8.453612e-04 	)
		(4729,		4.216530e-04 	)
		( 18673,	    2.105887e-04 		)
		(74209 , 	1.052363e-04 		)
		(295873,   	5.260370e-05 		)
		};			
		\addplot[mark=square, color = red] coordinates {
		(319,		7.321811e-03	)
		(1213,		4.072352e-04	)
		(4729,		1.930997e-05	)
		( 18673,	1.056526e-06		)
		(74209 , 	6.234117e-08		)
		(295873,   	3.799362e-09		)
		};	
		\addplot[mark=square, color = blue] coordinates {
		(88,    9.584604e-03    )
		(319,	5.044586e-04		)
		(1213,	2.665950e-05		)
		(4729,	1.522315e-06		)
		( 18673,9.077171e-08			)
		(74209 ,5.524485e-09 			)
		};		
		\legend{ $\mathcal{O}(h^{4})$}
		\end{loglogaxis}
	\end{tikzpicture}
\\[-0.8em]
\begin{tikzpicture}[scale = 0.7]
		\begin{loglogaxis}[
		 title={$k=4$},
			xlabel style={at={(0.9,-0.1)},anchor=north west},
			xlabel={DoFs},
			ylabel={error},
			legend style={at={(0,-0.1)},anchor=north west},legend columns=4,
		]
		\addplot[color = white] coordinates {(1000,0.0001)};
		\addplot[mark=o, color = green] coordinates {
		(417,    1.497193e-04 	)
		(1601,	 5.671462e-05   		)
		(6273,  2.249465e-05   	)
		(24833,  8.926291e-06  	)
		(98817, 3.542297e-06    )
		};
		\addplot[mark=triangle, color = green] coordinates {
		(489,			4.334723e-04		)
		(1873,			1.921407e-04   		)
		(7329,			8.694603e-05	  	)
		(28993,			3.936280e-05	  	)
		(115329,		1.782351e-05		    )
		};
		\addplot[mark=square, color = green] coordinates {
		(553	,		7.823369e-04 		)
		(2129 ,			3.889854e-04    		)
		(8353 ,			1.942339e-04 	  	)
		(33089 ,		9.705246e-05 		  	)
		(131713 ,		4.851014e-05 	    )
		};
		\addplot[color = white] coordinates {(1000,0.0001)};
		\addplot[mark=o, color = red] coordinates {
		(417,     4.384902e-04    	 	)
		(1601,		   1.412671e-05	)
		(6273,   2.970643e-07	 		)
		(24833,  7.245611e-09	 		)
		(98817,  1.900967e-10	  		)
		};
		\addplot[mark=triangle, color = red] coordinates {
		(489,		3.589537e-03 			)
		(1873,		2.521685e-04 	   		)
		(7329,		3.853314e-06 		  	)
		(28993,		8.098833e-08 		  	)
		(115329,	 2.114137e-09			    )
		};
		\addplot[mark=square, color = red] coordinates {
		(553	,		8.290572e-03		)
		(2129 ,			5.059554e-04   		)
		(8353 ,			9.164586e-06	  	)
		(33089 ,		2.067993e-07		  	)
		(131713 ,		6.292925e-09	    )
		};
		\addplot[color = white] coordinates {(1000,0.0001)};
		\addplot[mark=o, color = blue] coordinates {
		(113,    1.185386e-03 				    )
		(417,    3.296624e-05 					)
		(1601,	 1.011558e-06				)
		(6273,   3.147185e-08				  	)
		(24833,	 9.818780e-10			   	)
		};
		\addplot[mark=triangle, color = blue] coordinates {
		(133,		2.240668e-03				)
		(489,		4.861980e-05			)
		(1873,		1.286465e-06	   		)
		(7329,		3.731360e-08		  	)
		(28993,		1.110847e-09		  	)
		};
		\addplot[mark=square, color = blue] coordinates {
		(149 , 		4.135515e-03		)
		(553	,	1.067938e-04			)
		(2129 ,		2.639598e-06	   		)
		(8353 ,		8.117869e-08		  	)
		(33089 ,	3.048464e-09			  	)
		};
		\addplot[color = gray, densely dashed, thick, forget plot] coordinates {
		(113     , 0.594640245501642       )
		(98817 ,    0.000000026294883  )
		};\label{oh}
		\legend{\hspace*{-0.5cm}none:, $\frac{3\pi}{2}$,$\frac{7\pi}{4}$,$2\pi$,\hspace*{-0.5cm}$c_h^R:$,$\frac{3\pi}{2}$,$\frac{7\pi}{4}$,$2\pi$,\hspace*{-0.5cm}$c_h^F:$,$\frac{3\pi}{2}$,$\frac{7\pi}{4}$,$2\pi$}
		\end{loglogaxis}
		\node [draw,fill=white,anchor=south west] at (0.2,0.2) {\shortstack[l]{\small{\ref{oh} $\mathcal{O}(h^5)$}}};
	\end{tikzpicture}
\hspace*{-0.5cm}
&
\begin{tikzpicture}[scale = 0.7]
	\begin{semilogyaxis}[
		title={ $k=2,3,4$ },
    xlabel style={at={(0.9,-0.1)},anchor=north west},
		xlabel={$\ell$},
		ylabel={$\text{error}^{\text{R}}/\text{error}^{\text{F}}$},
		legend style={at={(0,-0.1)},anchor=north west},legend columns=4,legend cell align=left,
		ymin=0.5, ymax=1000,
	]	
	\addplot[color = white] coordinates {(3,10)};
	\addplot[mark=o, color = purple] coordinates {
	(1,		 4.384902e-04/   3.296624e-05 	)
	(2,      1.412671e-05/  1.011558e-06		) 
	(3, 	 2.970643e-07/	 3.147185e-08	 ) 
	(4, 	 7.245611e-09/	 9.818780e-10	)
  };		
	\addplot[mark=triangle, color = purple] coordinates {
	(1,	3.589537e-03/	4.861980e-05			 )
	(2, 2.521685e-04/	1.286465e-06			) 
	(3, 3.853314e-06/	3.731360e-08			  ) 
	(4, 8.098833e-08/	1.110847e-09				)
  };		
	\addplot[mark=square, color = purple] coordinates {
	(1,		8.290572e-03/	1.067938e-04			 )
	(2, 	5.059554e-04/	2.639598e-06			) 
	(3, 	 9.164586e-06/	8.117869e-08			  ) 
	(4, 	2.067993e-07/	3.048464e-09)
  };	
	\addplot[color = white] coordinates {(3,10)};
	\addplot[mark=o, color = black] coordinates {
	(1,	 		4.753118e-04/  		   1.394305e-04				)
	(2, 	    3.501259e-05/	  	   8.586574e-06		) 
	(3, 	    1.989514e-06/  		   5.332193e-07	 			) 
	(4, 	    1.197730e-07/ 	   3.336425e-08			)
	(5, 	     7.401174e-09/	   2.094811e-09	 		) 
  };	 
	\addplot[mark=triangle, color = black] coordinates {
	(1,		3.893841e-03/	2.726081e-04	   	)
	(2, 	2.450379e-04/	1.586614e-05		) 
	(3, 	9.673994e-06/  	9.654047e-07		) 
	(4, 	4.490788e-07/	5.958160e-08		)
	(5, 	2.358691e-08/	3.695316e-09		) 
   };	 
	\addplot[mark=square, color = black] coordinates {
	(1,		7.321811e-03/ 5.044586e-04			 )
	(2, 	4.072352e-04/	 2.665950e-05	) 
	(3, 	1.930997e-05/ 1.522315e-06  			) 
	(4, 	1.056526e-06/	 9.077171e-08	)
	(5, 	6.234117e-08/	 5.524485e-09	) 
   };	    
	\addplot[color = white] coordinates {(3,10)};
	\addplot[mark=o, color = orange] coordinates {
	(1,	 	1.832986e-03/ 	  6.625334e-04 	  			 )
	(2, 	1.877747e-04/	 7.277360e-05  	   			) 
	(3, 	1.820988e-05/ 	 8.518649e-06  	   			) 
	(4,		1.994188e-06/ 	 1.035930e-06  	   			 )
	(5, 	2.359184e-07/	 1.281105e-07				) 
	(6, 	2.888256e-08/	 1.594813e-08			) 
   };	
	\addplot[mark=triangle, color = orange] coordinates {
	(1,	 	5.531486e-03/	1.440387e-03		 )
	(2, 	6.402928e-04/ 	1.505234e-04		) 
	(3, 	6.088025e-05/ 	1.524973e-05		) 
	(4,		6.066699e-06/ 	1.577991e-06		 )
	(5, 	6.426752e-07/	1.703569e-07 		) 
	(6, 	7.173956e-08/ 	1.924132e-08		) 
   };
	\addplot[mark=square, color = orange] coordinates {
	(1,	       	7.484737e-03/ 2.255566e-03			 )
	(2, 	   	9.613364e-04/	 2.844502e-04	) 
	(3, 	   	1.071559e-04/	 3.504530e-05	) 
	(4,		   	1.253320e-05/	 4.324484e-06	 )
	(5, 	   	1.522378e-06/	 5.362578e-07     ) 
	(6, 	   	1.880485e-07/	 6.673934e-08	) 
   };	    
	\legend{\hspace*{-0.5cm}$k=4:$, $\frac{3\pi}{2}$,$\frac{7\pi}{4}$,$2\pi$,\hspace*{-0.5cm}$k=3:$,$\frac{3\pi}{2}$,$\frac{7\pi}{4}$,$2\pi$,\hspace*{-0.5cm}$k=2:$,$\frac{3\pi}{2}$,$\frac{7\pi}{4}$,$2\pi$}
	\end{semilogyaxis}
\end{tikzpicture}
\end{tabular}
\end{center}
\caption{Comparison of the standard and modified finite elements with energy corrections $c_h^R(\cdot,\cdot)$ and $c_h^F(\cdot,\cdot)$ for approximation orders $k=2,3,4$. The bottom right plot represents the ratio between the errors produced by the layer-correction and function-correction.}
\label{fig:comp}
\end{figure}

In \cref{fig:comp}, we graphically illustrate the comparison of the convergence rates of the standard scheme and the two corrections, for all studied scenarios, i.e., approximation orders $k = 2,3,4$ and $\omega = 3/2\pi,7/4\pi,2\pi$. The first three plots demonstrate the recovered optimal rates $k+1$ of the error $\|u-\uhm\|_{0,\alpha}$, $\alpha = k-\lambda_1$, where the $y$-label of the plots represents the error, i.e., $\|u-\uhm\|_{0,\alpha}$ and $\|u-u_h\|_{0,\alpha}$, respectively. As one can see, both corrections are qualitatively comparable, nevertheless, the function-correction $c_h^F(\cdot,\cdot)$ is quantitatively significantly better for higher $k$. This observation is depicted in the bottom right  plot of \cref{fig:comp}, showing the ratio between the errors caused by the approximations computed with $c_h^R(\cdot,\cdot)$ and $c_h^F(\cdot,\cdot)$. Note the clustering of the ratios by the number of the correction parameters $K$. 

%!TEX root = main_ho.tex

\appendix
\section{Interpolation and approximation results}

At the end of this article we give some basic properties mandatory to classical approximation error analysis in the weighted norms. 
\begin{lemma}[Inverse inequality]\label{l:inv}
	For any $\vh\in\Vhk$ with $k\in\mathbb{N}$ and $\alpha>-1$ there holds
	\begin{align}\label{inv}
		\|v_h\|_{1,\alpha} \lesssim h^{-1} \|v_h\|_{0,\alpha}.
	\end{align}
\end{lemma}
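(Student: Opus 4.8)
The plan is to reduce the claim to a purely local, element-wise estimate and then treat separately the elements which touch the singular corner $\mathbf{x}_s$ and those which stay away from it. Writing out the weighted norms,
\begin{align*}
	\|\vh\|_{1,\alpha}^2 = \|r^{\alpha-1}\vh\|_0^2 + \|r^\alpha\g\vh\|_0^2, \qquad \|\vh\|_{0,\alpha}^2 = \|r^\alpha\vh\|_0^2,
\end{align*}
it suffices, after summing over $T\in\mathcal{T}_h$, to establish the local bound
\begin{align*}
	\|r^{\alpha-1}\vh\|_{0,T}^2 + \|r^\alpha\g\vh\|_{0,T}^2 \lesssim h^{-2}\,\|r^\alpha\vh\|_{0,T}^2
\end{align*}
with a constant independent of $T$ and $h$.

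For an element $T$ with $\mathrm{dist}(T,\mathbf{x}_s)\gtrsim h$ I would first freeze the weight: since the family is shape-regular and quasi-uniform, $r$ varies only by a bounded factor on $T$, so $r^\alpha$ and $r^{\alpha-1}$ are, up to constants depending only on $\alpha$, equal to $d_T^\alpha$ and $d_T^\alpha d_T^{-1}$ with $d_T:=\mathrm{dist}(T,\mathbf{x}_s)$. Pulling these constants out, the gradient term follows from the classical unweighted inverse inequality $\|\g\vh\|_{0,T}\lesssim h^{-1}\|\vh\|_{0,T}$, and the $L^2$ term from $d_T^{-1}\lesssim h^{-1}$. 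This already gives the desired local bound on every element bounded away from the corner.

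The delicate part concerns the finitely many elements touching $\mathbf{x}_s$, where the weight is genuinely singular. Here I would rescale $x=h\hat{x}$, mapping each such $T$ onto one of the finitely many fixed reference shapes furnished by the coarse mesh, since uniform refinement renders the corner patch self-similar. Under this scaling $r=h\hat r$ and $\vh|_T=\hat v\in P_k$, and a direct computation shows that every power of $h$ cancels, reducing the claim to the $h$-independent estimate
\begin{align*}
	\|\hat r^{\alpha-1}\hat v\|_{0,\hat T}^2 + \|\hat r^\alpha\g\hat v\|_{0,\hat T}^2 \lesssim \|\hat r^\alpha\hat v\|_{0,\hat T}^2
\end{align*}
on the reference element $\hat T$. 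I expect this to be the main obstacle, and it is resolved by two observations. First, because $\mathbf{x}_s\in\partial\Omega$ and $\vh\in H^1_0(\Omega)$, the polynomial $\hat v$ vanishes at the corner vertex, so $|\hat v|\lesssim\hat r$ there; combined with $\alpha>-1$ this makes all three weighted integrals finite, the singular weight $\hat r^{2\alpha-2}$ being integrable against $|\hat v|^2$ precisely under these two conditions. Second, $\hat v\mapsto\|\hat r^\alpha\hat v\|_{0,\hat T}$ is a genuine norm on the finite-dimensional space $\{\hat v\in P_k(\hat T):\hat v(\mathbf{x}_s)=0\}$, whereas the left-hand side defines a finite seminorm on the same space; by equivalence of norms on finite-dimensional spaces the inequality follows with a constant depending only on $k$, $\alpha$ and the reference shapes. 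Taking the maximum over the finitely many reference configurations and summing over all elements then yields the assertion.
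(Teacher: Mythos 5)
Your proof is correct and is precisely the ``standard scaling argument'' that the paper invokes in one line without detail: freezing the weight and applying the classical unweighted inverse inequality on elements with $\mathrm{dist}(T,\mathbf{x}_s)\gtrsim h$, and on the corner elements scaling to the finitely many fixed reference configurations and invoking equivalence of norms on the finite-dimensional space of polynomials vanishing at the corner. Your observation that $\vh(\mathbf{x}_s)=0$ (which holds because $\Vhk\subset H^1_0(\Omega)$ and $\mathbf{x}_s\in\partial\Omega$) is exactly what makes $\|\hat r^{\alpha-1}\hat v\|_{0,\hat T}$ finite in the range $-1<\alpha\leqslant 0$, where the inequality would fail for constants, is the one genuinely delicate point of the argument, and you handle it correctly.
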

\noindent The proof follows directly from a standard scaling argument.

\begin{lemma}[Higher-order interpolation in weighted-norms]\label{l:intest}
	Let $I_h^k : \mathcal{C}(\overline{\Omega}) \rightarrow \Vhk$ be the interpolation operator of order $k$. Then, for $v \in H^{k+1}_{\alpha}(\Omega)$ with $\alpha <  k$ it holds for $l\in\{0,1\}$:
	\begin{align}\label{interr}
		\|\g^l(v-\Ihk v)\|_{0,\beta} \lesssim h^{k+1-l+\beta-\alpha} \|v\|_{k+1,\alpha}, \qquad \alpha-(k+1)+l \leqslant \beta \leqslant \alpha.
	\end{align}
\end{lemma}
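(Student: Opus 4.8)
The plan is to localize the estimate by a dyadic decomposition of $\Omega$ around the singular corner (placed at the origin), reduce each piece to a reference configuration by affine scaling, and then distinguish between the dyadic annuli that the quasi-uniform mesh resolves and the single innermost patch of size comparable to $h$. First I would write $\Omega=\omega_h\cup\bigcup_j D_j$, where $D_j\coloneqq\{x:d_{j+1}\leqslant r\leqslant d_j\}$ with $d_j\coloneqq 2^{-j}$, the union running over $j$ with $d_j\gtrsim h$, and $\omega_h\coloneqq\{r\lesssim h\}$ collecting the $\mathcal{O}(1)$ elements touching the corner. On each $D_j$ the weights $r^\gamma$ are comparable to the constant $d_j^\gamma$, so the weighted norms reduce, up to constants, to plain $L^2(D_j)$-norms multiplied by the appropriate power of $d_j$. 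Since $\alpha<k$, the embedding $H^{k+1}_\alpha(\Omega)\embd\mathcal{C}(\overline{\Omega})$ from \eqref{embd2} guarantees that $\Ihk v$ is well defined.

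For the resolved annuli I would rescale $D_j$ to a reference annulus $\hat D$ of unit size via $x=d_j\hat x$. Since nodal interpolation commutes with affine maps, the reference mesh size is $\hat h=h/d_j\lesssim 1$, and $v$ is smooth on $\hat D$ (no singularity away from the corner), so the standard unweighted estimate yields $\|\hat\g^l(\hat v-\hat I\hat v)\|_{L^2(\hat D)}\lesssim\hat h^{\,k+1-l}\,|\hat v|_{H^{k+1}(\hat D)}$. Scaling back, the $d_j$-powers from the two sides combine, and using that the top-order term of $\|\cdot\|_{k+1,\alpha}$ carries exactly the weight $r^\alpha$, I obtain the clean local bound
\[
\|r^\beta\g^l(v-\Ihk v)\|_{L^2(D_j)}\lesssim h^{k+1-l}\,d_j^{\beta-\alpha}\,\|r^\alpha D^{k+1}v\|_{L^2(D_j)},
\]
where $\|r^\alpha D^{k+1}v\|_{L^2(D_j)}$ abbreviates the top-order ($|\mu|=k+1$) contribution to $\|v\|_{k+1,\alpha}$ restricted to $D_j$. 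The crucial point is that, because $\beta\leqslant\alpha$ so that $\beta-\alpha\leqslant 0$, and because $d_j\geqslant h$ on every resolved annulus, one has $d_j^{\beta-\alpha}\leqslant h^{\beta-\alpha}$; this converts the variable $d_j$-factor into the desired \emph{uniform} power $h^{k+1-l+\beta-\alpha}$. Squaring, summing over $j$ (the annuli have bounded overlap), and absorbing $\sum_j\|r^\alpha D^{k+1}v\|_{L^2(D_j)}^2\lesssim\|v\|_{k+1,\alpha}^2$ then gives the estimate on the outer region.

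The remaining and genuinely delicate contribution is the innermost patch $\omega_h$, where the mesh no longer resolves the dyadic scales and the weight is singular; here I would carry out a single scaling $x=h\hat x$ mapping $\omega_h$ to a fixed corner patch $\hat\omega$ of unit diameter containing the corner. On this fixed domain one needs a \emph{weighted} interpolation estimate $\|\hat r^\beta\hat\g^l(\hat v-\hat I\hat v)\|_{L^2(\hat\omega)}\lesssim\|\hat v\|_{H^{k+1}_\alpha(\hat\omega)}$, obtained from a weighted Bramble--Hilbert argument exploiting that $\hat I$ preserves polynomials of degree $\leqslant k$; the lower constraint $\beta\geqslant\alpha-(k+1)+l$ is precisely what makes the associated weighted Poincar\'e/Hardy inequality hold near the origin (integrability of the singular weight against the $(k+1)$-st order remainder), while $\alpha<k$ again secures continuity so that the nodal values are controlled. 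Tracking the powers of $h$ through this scaling reproduces exactly $h^{k+1-l+\beta-\alpha}\|v\|_{k+1,\alpha}$, and combining with the outer estimate completes the proof. The main obstacle is thus this fixed corner-patch estimate: away from the corner everything is routine affine scaling plus the standard Bramble--Hilbert lemma, whereas near the corner one must invoke the sharp weighted approximation theory in Kondratiev-type spaces (see, e.g., \cite{kufner_1985}), with the two hypotheses $\alpha<k$ and $\beta\geqslant\alpha-(k+1)+l$ entering in an essential way.
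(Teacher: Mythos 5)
Your proposal is correct and matches the paper's own proof in all essentials: the paper likewise splits the estimate into contributions away from the corner, where $r$ is comparable to a constant $c_T\gtrsim h$ so that standard interpolation plus $\beta\leqslant\alpha$ converts the local factor into the uniform power $h^{k+1-l+\beta-\alpha}$, and the contribution at the corner, handled by an $h$-scaling to a reference configuration together with a weighted Bramble--Hilbert argument and the embedding $H^{k+1}_{\alpha}\embd H^{l}_{\beta}$, which is exactly where the hypothesis $\beta\geqslant\alpha-(k+1)+l$ enters. The only difference is organizational---the paper argues element by element rather than grouping the outer region into dyadic annuli---which changes nothing of substance.
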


\begin{proof}
	The interpolation operator is for $v\in H^{k+1}_{\alpha}(\Omega)$ with $\alpha <k$ well-defined due to the continuous embedding \cref{embd2}. To prove \cref{interr}, we proceed in an element-wise fashion: For triangles $T\in\mathcal{T}_h$ not attached to the singular vertex we have on $T$ that $r \approx c_T$ for some constant $c_T>0$. Thus, for $l\in\{0,1\}$, by the definition of the weighted norm, standard interpolation error estimate and the fact that $c_T\gtrsim h$ we obtain for $\beta\leqslant \alpha$:
	\begin{align*}
		\| \nabla^l (v-\Ihk v)\|_{L^2_\beta(T)} &\lesssim c_T^{\beta} \|\nabla^l(v-\Ihk v)\|_{L^2(T)} \lesssim c_T^{\beta}\, h^{k+1-l}\|\nabla^{k+1} v\|_{L^2(T)}\\
		&\lesssim c_T^{\beta-\alpha}\, h^{k+1-l}\|\nabla^{k+1} v \|_{L^2_\alpha(T)}\lesssim h^{k+1-l+\beta-\alpha} \|v\|_{H^{k+1}_\alpha(T)}.
	\end{align*}
	If the singular point is a vertex of $T$, we derive the interpolation property on the reference element, i.e., by scaling arguments, Bramble--Hilbert lemma, embedding of $H^{k+1}_{\alpha}(\Omega)\embd H^{l}_{\beta}(\Omega)$ for $\beta\geqslant\alpha-(k+1)+l$ we can estimate:
	\begin{align*}
		\|\nabla^l (v-\Ihk v)\|_{L^2_\beta(T)}
		&\lesssim h^{\beta-l+1}\|\hat{r}^{\beta} \hat\nabla^l (\hat v-\hat I_h^k \hat v)\|_{L^2(\hat{T})}
		\lesssim h^{\beta-l+1} \|\hat{r}^{\alpha} \hat\nabla^{k+1} \hat v \|_{L^2(\hat{T})}\\
		&\lesssim h^{k+1-l+\beta-\alpha} \|r^{\alpha} \nabla^{k+1} v\|_{L^2(T)}
		\lesssim h^{k+1-l+\beta-\alpha} \|v\|_{H^{k+1}_\alpha(T)}.
	\end{align*}
	By summing over all elements we derive what has been claimed.
\end{proof}

\begin{lemma}[Interpolation of singular functions]\label{l:appsing}
	Let $\Ihk : \mathcal{C}(\overline{\Omega}) \rightarrow \Vhk$ be the interpolation operator of order $k$. Then, for singular functions $s_i$, $i\in\mathbb{N}$, it holds the interpolation error:
	\begin{align}\label{sinterr}
		\|\g^l(s_i-\Ihk s_i)\|_0 = \mathcal{O}(h^{1-l+\lambda_i}),
	\end{align}
	end the (standard) Galerkin approximation error:
	\begin{align}\label{saprerr}
		h^{(2-l)\lambda_i}\lesssim \|\nabla^l(s_i-s_{i,h})\|_0 \lesssim h^{(2-l)\lambda_i-\eps}.
	\end{align}
\end{lemma}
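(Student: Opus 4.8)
The plan is to establish the two displays separately: the interpolation bound \cref{sinterr} from the weighted interpolation estimate \cref{interr}, sharpened by a dyadic decomposition, and the two-sided Galerkin bound \cref{saprerr} from quasi-optimality, an Aubin--Nitsche duality, and a local analysis at the corner.

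For \cref{sinterr} I would start from the observation that $s_i\in H^{k+1}_{\alpha}(\Omega)$ for every $\alpha>k-\lambda_i$. Applying \cref{l:intest} with $\alpha=k-\lambda_i+\eps$ and $\beta=0$ gives $\|\g^l(s_i-\Ihk s_i)\|_0\lesssim h^{1-l+\lambda_i-\eps}$, i.e.\ the claimed exponent up to an arbitrarily small loss. To remove this loss, which is necessary since \cref{sinterr} is stated without $\eps$, I would decompose $\Omega$ into the single corner element $T_0$ and dyadic annuli $A_j\approx\{2^{-j-1}\le r\le2^{-j}\}$. On each $A_j$ the function $s_i$ is smooth with $|\g^{k+1}s_i|\approx r^{\lambda_i-k-1}$, so the unweighted interpolation estimate contributes $\approx h^{2(k+1-l)}2^{-2j(\lambda_i-k)}$ to $\|\g^l(s_i-\Ihk s_i)\|_0^2$; for $\lambda_i<k$ the geometric sum is dominated by the innermost annulus and, together with the direct estimate $\|\g^l(s_i-\Ihk s_i)\|_{L^2(T_0)}\approx h^{1-l+\lambda_i}$ on the corner element, yields exactly $h^{1-l+\lambda_i}$ (for $\lambda_i\ge k$ the unweighted estimate already gives $h^{k+1-l}\le h^{1-l+\lambda_i}$).

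For the upper bounds in \cref{saprerr} I would treat $l=1$ by quasi-optimality: since $s_{i,h}$ is the energy projection, C\'ea's lemma and \cref{sinterr} with $l=1$ give $\|\g(s_i-s_{i,h})\|_0\lesssim\|\g(s_i-\Ihk s_i)\|_0\lesssim h^{\lambda_i}$. For $l=0$ I would use duality: let $\phi$ solve $-\Delta\phi=s_i-s_{i,h}$ with $\phi|_{\partial\Omega}=0$, expand $\phi=\sum_j c_j s_j+\tilde\phi$ by \cref{l:decomp} with $\sum_j|c_j|+\|\tilde\phi\|_{2,-\beta}\lesssim\|s_i-s_{i,h}\|_0$, and write
\begin{align*}
	\|s_i-s_{i,h}\|_0^2=a(s_i-s_{i,h},\phi)=\sum_j c_j\,a(s_i-s_{i,h},s_j)+(s_i-s_{i,h},-\Delta\tilde\phi).
\end{align*}
Galerkin orthogonality turns the diagonal term $j=i$ into $c_i\|\g(s_i-s_{i,h})\|_0^2\lesssim\|s_i-s_{i,h}\|_0\,h^{2\lambda_i}$, the smooth term is of higher order by \cref{sinterr}, and the off-diagonal terms $a(s_i-s_{i,h},s_j)=a(s_i-s_{i,h},s_j-s_{j,h})$, $j\ne i$, are controlled by the mutual orthogonality of the singular functions exactly as in \cref{l:aprconv2}; dividing by $\|s_i-s_{i,h}\|_0$ and accounting for the critical dual weight $\beta=\lambda_1-\eps$ gives $\|s_i-s_{i,h}\|_0\lesssim h^{2\lambda_i-\eps}$.

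It remains to argue the lower bounds, which express the sharpness of these rates. The bound $\|\g(s_i-s_{i,h})\|_0\gtrsim h^{\lambda_i}$ follows from a saturation argument on the corner element $T_0$: after scaling to the reference element, $r^{\lambda_i}\sin(\lambda_i\theta)$ keeps a fixed positive distance to $P_k$, so no polynomial, and hence not the energy projection, can approximate $s_i$ faster than $h^{\lambda_i}$ in the $H^1$-seminorm. The $L^2$ lower bound $\|s_i-s_{i,h}\|_0\gtrsim h^{2\lambda_i}$ rests on the leading-order asymptotics of the pollution and is classical; I would invoke the corner analysis of \cite{Blum1988}. I expect the genuine difficulty to be the $L^2$ upper bound: a naive Cauchy--Schwarz estimate of the off-diagonal terms only yields $h^{\lambda_i+\lambda_j}$, which for $j=1$ is worse than the target $h^{2\lambda_i}$, so the argument truly hinges on the $L^2$-orthogonality of the singular functions (equivalently the interpolation assumption \cref{assint}) to suppress those contributions and recover the doubled rate.
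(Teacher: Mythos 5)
For the interpolation bound \cref{sinterr} your argument is essentially the paper's own proof, only written out in more detail: the paper likewise obtains the $\eps$-lossy estimate from $s_i\in H^{k+1}_{k-\lambda_i+\eps}(\Omega)$ together with \cref{l:intest}, and then removes the $\eps$ by a ``direct computation of the interpolation error on the $h$-surrounding of the singular point and the complementary set to $\Omega$ where $s_i$ are already smooth''; your dyadic-annulus computation is a correct concrete realization of exactly that step. One slip in the parenthetical: for $\lambda_i\geqslant k$ you write $h^{k+1-l}\leqslant h^{1-l+\lambda_i}$, which is backwards for $h<1$; in that regime the unweighted estimate gives only $\mathcal{O}(h^{k+1-l})$, which does \emph{not} imply the stated rate. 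This regime is irrelevant for how the paper uses the lemma, but as written your reduction does not cover it.

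For the Galerkin bounds \cref{saprerr} the situation is different: the paper gives no argument at all and simply cites \cite{Blum1982}. Your attempt to actually prove the $L^2$ upper bound contains a genuine gap. You control the off-diagonal terms ``exactly as in \cref{l:aprconv2}'', but the hypotheses of that lemma are \emph{unsatisfiable} for the uncorrected scheme: for $c_h\equiv 0$ one has $g_h(s_i)=\|\nabla(s_i-s_{i,h})\|_0^2\gtrsim h^{2\lambda_i}$ (this is the lower bound in \cref{saprerr} with $l=1$, i.e.\ the pollution the paper's correction is designed to remove), while the index condition $i<(\sigma+1)\tfrac{\omega}{2\pi}$ in \cref{l:aprconv2} is equivalent to $2\lambda_i<\sigma+1$, so the requirement $g_h(s_i)=\mathcal{O}(h^{\sigma+1})$ in \cref{assopt} can never hold in the range where you need the lemma. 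Hence that machinery cannot be invoked verbatim; one would need a separate argument for the cross terms $a(s_i-s_{i,h},s_j-s_{j,h})$ of the \emph{standard} scheme, and this is precisely what is delegated to \cite{Blum1982}. Moreover, your route imports the mesh assumption \cref{assint}/\cref{U}, which is not a hypothesis of the lemma being proved. Your closing observation that the doubled rate ``truly hinges'' on this orthogonality is astute and is corroborated by the paper's own experiments (\cref{tab:nonuni_pairs} shows $\tilde g_h(s_i,s_j)\simeq h^{\lambda_i+\lambda_j}$ on non-symmetric meshes, which through your own duality identity would pollute $\|s_i-s_{i,h}\|_0$ at a rate worse than $h^{2\lambda_i}$ for $i\geqslant 2$) --- but it means that what your argument could deliver is a conditional statement, not the lemma as stated. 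Finally, note that your $L^2$ lower bound is itself only a pointer to \cite{Blum1988}, so even in your proposal the hardest part of \cref{saprerr} remains a citation, just to a different paper.
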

\begin{proof}
	The proof of \cref{saprerr} can be found in \cite{Blum1982}. Concerning \cref{sinterr}, the suboptimal estimate
	\begin{align*}
		h^{1-l+\lambda_i}\lesssim \|\nabla^l(s_i-\Ihk s_i)\|_0 \lesssim h^{1-l+\lambda_i-\eps}.
	\end{align*}
	follows directly from the regularity of the singular function $s_i$, respectively $s_i\not\in H^{k+1}_{k-\lambda_i}(\Omega)$, and \cref{l:intest}. The optimal estimate without $\eps-$dependence can be then derived by a direct computation of the interpolation error on the $h-$surrounding of the singular point and the complementary set to $\Omega$ where $s_i$ are already smooth.
\end{proof}

\begin{rem}\label{Iwhk}
Recalling that $(\Iwhk v - \Ihk v)\neq {0}$ on a $\mathcal{O}(h)$-surrounding of the singular point,
it is easy to see that the interpolation properties \cref{interr} also holds
for the restricted interpolation operator $\Iwhk$. Moreover, the interpolation error for the singular functions $s_i$ can be directly computated by the triangle inequality, \cref{interr} and a scaling argument
\begin{align}\label{estinttilde2}
	\|\nabla(s_i\!-\!\Iwhk s_i)\|_{0}\!\leqslant \!\|\nabla(s_i\!-\!\Ihk s_i)\|_{0}\! +\!\|\g(\Ihk s_i\!-\!\Iwhk s_i)\|_{0}
	\lesssim h^{\lambda_i}\! +\! \|s_i\|_{L^\infty(\Sh)}\!\lesssim\! h^{\lambda_i}.
\end{align}
\end{rem}

By standard estimates, we can easily obtain error bounds of the modified scheme, as stated in the next lemma. These, however, are for the pollution exhibiting $L^2-$norm in general sub-optimal, nevertheless they will be practical in the proof of \cref{mainthm}.
\begin{lemma}[A~priori estimates in standard norms]\label{l:standest}
	Let the solution $w$ of \cref{prob} be such that $w \in H^{k+1}_{\alpha}(\Omega) \cap H^1_0(\Omega)$ for some $0 \leqslant \alpha < k$, and $\whm\in\Vhk$ be its modified approximation. Then, for all $1-\lambda_1<\beta<1$ it is valid:
	\begin{align}\label{standest}
		\|\nabla(w-\whm)\|_0 \lesssim h^{k-\alpha} \|w\|_{k+1,\alpha} \quad \text{and} \quad \|w-\whm\|_0 \lesssim h^{k+1-\alpha-\beta}\|w\|_{k+1,\alpha}.
	\end{align}
\end{lemma}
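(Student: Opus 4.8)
The plan is to treat the two estimates separately: the gradient bound first, by a coercivity (C\'ea-type) argument, and then the $L^2$ bound by a weighted duality argument that reduces to the gradient bound already in hand. In both cases everything is driven by the interpolation estimate \cref{interr} and by the smallness of $\Sh$ near the corner, which is what renders the correction term $c_h(\cdot,\cdot)$ harmless.

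For the gradient bound I would start from coercivity in \cref{A}, $\|w-\whm\|_1^2\lesssim a_h(w-\whm,w-\whm)$, and split $a_h(w-\whm,w-\whm)=a_h(w-\whm,w-\Ihk w)+a_h(w-\whm,\Ihk w-\whm)$. Since $a_h(\whm,\vh)=(g,\vh)_\Omega=a(w,\vh)$ for every $\vh\in\Vhk$, the orthogonality \cref{GO} can be written as $a_h(w-\whm,\vh)=-c_h(w,\vh)$, so the second summand equals $-c_h(w,\Ihk w-\whm)$. The first summand is bounded by continuity of $a_h$ (again \cref{A}) by $\|w-\whm\|_1\,\|w-\Ihk w\|_1$, and \cref{interr} with $l=1$, weight $0$ gives $\|w-\Ihk w\|_1\lesssim h^{k-\alpha}\|w\|_{k+1,\alpha}$. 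For the correction term I would use $|c_h(w,\Ihk w-\whm)|\lesssim\|w\|_{1,\Sh}\big(\|w-\Ihk w\|_1+\|w-\whm\|_1\big)$ together with the key observation that, since $r\lesssim h$ on $\Sh$ and $k-\alpha>0$, one has $\|w\|_{1,\Sh}\lesssim h^{k-\alpha}\|r^{\alpha-k}\g w\|_0\lesssim h^{k-\alpha}\|w\|_{k+1,\alpha}$ (the $L^2$-contribution on $\Sh$ being of higher order).

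Writing $A:=h^{k-\alpha}\|w\|_{k+1,\alpha}$, the two bounds combine to $\|w-\whm\|_1^2\lesssim A\,\|w-\whm\|_1+A^2$, and Young's inequality then absorbs the term linear in $\|w-\whm\|_1$ and yields $\|w-\whm\|_1\lesssim A$, which is the first estimate in \cref{standest}. For the $L^2$ bound I would argue by duality in a weighted norm, using that $\|w-\whm\|_0\leqslant C\,\|w-\whm\|_{0,-\beta}$ for $\beta>0$ because $r^{-\beta}$ is bounded below on $\Omega$. Fixing $1-\lambda_1<\beta<1$, consider the dual problem $-\Delta\psi=r^{-2\beta}(w-\whm)$ with homogeneous Dirichlet data; its right-hand side lies in $L^2_\beta$ since $w-\whm\in L^2_{-\beta}$ (here $\beta<1$ is exactly what guarantees $\whm\in L^2_{-\beta}$, while $w\in L^2_{-\beta}$ follows from $w\in H^{k+1}_\alpha$), and by \cref{l:decomp} (applicable because $\beta>1-\lambda_1$) one obtains $\|\psi\|_{2,\beta}\lesssim\|w-\whm\|_{0,-\beta}$. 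Then $\|w-\whm\|_{0,-\beta}^2=a(w-\whm,\psi)=a(w-\whm,\psi-I^1_h\psi)-c_h(\whm,I^1_h\psi)$ by \cref{GO}; the first term is controlled by Cauchy--Schwarz, the gradient bound just proven, and $\|\g(\psi-I^1_h\psi)\|_0\lesssim h^{1-\beta}\|\psi\|_{2,\beta}$ from \cref{interr} (linear interpolation), while the correction term is treated exactly as in the energy step using $\|\whm\|_{1,\Sh}\lesssim h^{k-\alpha}\|w\|_{k+1,\alpha}$ and $\|I^1_h\psi\|_{1,\Sh}\lesssim h^{1-\beta}\|\psi\|_{2,\beta}$. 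Both contributions are $\lesssim h^{k+1-\alpha-\beta}\|w\|_{k+1,\alpha}\,\|w-\whm\|_{0,-\beta}$, so dividing through gives the second estimate in \cref{standest}.

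The main obstacle I expect is the weighted bookkeeping in the duality step: $\beta$ must be chosen in the narrow window $(1-\lambda_1,1)$ so that the dual problem is $H^2_\beta$-regular \emph{and} the discrete error still sits in $L^2_{-\beta}$, and one must pass from the resulting weighted bound back to the standard $L^2$-norm. The repeated exploitation of $r\lesssim h$ on $\Sh$ to extract the factors $h^{k-\alpha}$ and $h^{1-\beta}$ on the $\Sh$-supported correction terms is precisely what makes $c_h(\cdot,\cdot)$ contribute at the required order, and getting these scaling exponents right is the delicate part; the energy estimate itself is then routine once the smallness of $\|w\|_{1,\Sh}$ has been established.
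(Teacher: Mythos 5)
Your proof is correct. The energy step is essentially the paper's argument: the coercivity-plus-consistency computation you write out by hand (via the identity $a_h(w-\whm,\vh)=-c_h(w,\vh)$, the $\Sh$-scaling $\|w\|_{1,\Sh}\lesssim h^{k-\alpha}\|w\|_{k+1,\alpha}$, and Young's inequality) is exactly the content of the Strang lemma that the paper invokes directly, and your scaling bound is the paper's \cref{Shest}. The duality step, however, takes a mildly but genuinely different route: the paper poses the \emph{unweighted} dual problem $-\Delta v = w-\whm$ with the shift estimate $\|v\|_{2,\beta}\lesssim\|w-\whm\|_0$ and bounds $\|w-\whm\|_0$ directly, whereas you pose the \emph{weighted} dual problem $-\Delta\psi=r^{-2\beta}(w-\whm)$, obtain $\|\psi\|_{2,\beta}\lesssim\|w-\whm\|_{0,-\beta}$, bound the stronger weighted error $\|w-\whm\|_{0,-\beta}$, and only then pass to the $L^2$-norm using $r^{-\beta}\gtrsim 1$ on the bounded domain. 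Your route is precisely the pattern the paper uses for the smooth remainder in \cref{l:srem}, transplanted to this lemma (your reading of \cref{l:decomp} as the shift estimate with weight $\beta$ on both sides is also the reading the paper itself employs there); it costs the extra admissibility check $w-\whm\in L^2_{-\beta}(\Omega)$, which you correctly supply ($\beta<1$ for the discrete function, the embedding $H^{k+1}_{\alpha}\hookrightarrow H^1_{\alpha-k}$ for $w$), and it buys a slightly stronger intermediate conclusion, namely the $L^2_{-\beta}$ error bound at the same rate $h^{k+1-\alpha-\beta}$. All remaining ingredients -- testing the orthogonality \cref{GO} with $I^1_h\psi$, the $h^{1-\beta}$ interpolation bound for the dual solution, and the $\Sh$-supported bounds $\|\nabla\whm\|_{L^2(\Sh)}\lesssim h^{k-\alpha}\|w\|_{k+1,\alpha}$ and $\|I^1_h\psi\|_{1,\Sh}\lesssim h^{1-\beta}\|\psi\|_{2,\beta}$ -- coincide with the paper's estimates \cref{tm1}--\cref{tm3}.
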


\begin{proof}
	The modified bilinear form $a_h(\cdot,\cdot)$ is continuous and $H^1_0(\Omega)-$coercive by \cref{A}, thus, we employ the Strang lemma and modified Galerkin orthogonality \cref{GO}  and derive
	\begin{align*}
		\|w-\whm\|_1 &\lesssim \inf_{\vh\in\Vhk} \|w-\vh\|_1 + \sup_{0\neq\vh\in\Vhk} \frac{|a_h(w,\vh)-f(\vh)|}{\|\vh\|_1}\\
		&\lesssim\|w-\Ihk w\|_{1} + \|\nabla w\|_{{L^2(\Sh)}}.
	\end{align*}
	By embedding $H^{k+1}_{\alpha}(\Omega)\embd H^1_{\alpha-k}(\Omega)$ and the fact that $r\lesssim h$ on $\Sh$, we have for $\alpha<k$:
	\begin{align}\label{Shest}
		\|\nabla w\|_{L^2(\Sh)}\lesssim h^{k-\alpha} \|w\|_{H^1_{\alpha-k}(\Sh)} \lesssim h^{k-\alpha}\|w\|_{H^{k+1}_\alpha(\Omega)}.
	\end{align}
	This and the interpolation error \cref{interr} give the estimate for the gradient. For the $L^2-$estimate let us consider the dual problem
	\begin{align*}
		-\Delta v = (w-\whm) \quad \text{in }\Omega, \qquad \text{and} \qquad v=0 \quad \text{on }\partial \Omega,
	\end{align*}
	which is well-defined since $(w-\whm)\in L^{2}(\Omega)$, thus $v\in H^2_\beta(\Omega)$. Then, by the chain rule and the modified Galerkin orthogonality \cref{GO}, we obtain
	\begin{equation}\label{tmn}
		\begin{aligned}
			\|w-\whm\|_0^2 &=(w-\whm,w-\whm) = (w-\whm,-\Delta v) = a(w-\whm,v) \\
			&=a(w-\whm, v-I^1_h v) - c_h(\whm,I^1_h v)\\
			&\lesssim \|\nabla(w-\whm)\|_0 \|\nabla(v-I^1_h v)\|_{0} + \|\nabla \whm\|_{L^2(\Sh)} \|\nabla I^1_h v\|_{L^2(\Sh)}.
		\end{aligned}
	\end{equation}
	We consider both terms separately. By the interpolation results stated in \cref{interr}, the first part of \cref{standest}, and standard a~priori estimates for the Poisson equation $\|v\|_{2,\beta}\lesssim \|w-\whm\|_0$ we derive for $1-\lambda_1<\beta<1$:
	\begin{equation}\label{tm1}\begin{aligned}
		\|\nabla(w-\whm)\|_0 \|\nabla(v-I^1_h v)\|_0
		&\lesssim h^{k-\alpha} \|w\|_{k+1,\alpha}\, h^{1-\beta} \|v\|_{2,\beta}\\
		&\lesssim h^{k+1-\alpha-\beta} \|w\|_{k+1,\alpha} \|w-\whm\|_0.
	\end{aligned}\end{equation}
	For the second term of \cref{tmn}, we use similar scaling arguments as for \cref{Shest},  the triangle inequality and the above derived estimate for the gradients, thus
	\begin{align}\label{tm2}
		\|\nabla \whm\|_{L^2(\Sh)} 
		\lesssim h^{-\alpha}(\|\nabla(w-\whm)\|_{L^2_\alpha(\Sh)} + \|\nabla w\|_{L^2_\alpha(\Sh)})
		\lesssim h^{k-\alpha} \|w\|_{k+1,\alpha}.
	\end{align}
	By a~priori estimates of the Poisson equation with $v\in H^2_\beta(\Omega)$:
	\begin{equation}\label{tm3}\begin{aligned}
		\|\nabla I^1_h v\|_{L^2(\Sh)} &\lesssim \|\nabla (I^1_h v -v) \|_{L^2(\Sh)} + \|\nabla v \|_{L^2(\Sh)}\\
		&\lesssim h^{1-\beta} \|v\|_{2,\beta} \lesssim h^{1-\beta} \|w-\whm\|_0.
	\end{aligned}\end{equation}
	Combining the results of \cref{tm1}, \cref{tm2} and \cref{tm3} yields the $L^2-$estimate.
\end{proof}

If the solution $w$ has decreased regularity, let us say we know only that $w\in H^2_{\ta}(\Omega)$, $\ta>1-\lambda_1$, then we can show, independently of the approximation order $k$, by similar arguments:
\begin{align}\label{standest2}
	\|\nabla(w-\whm)\|_0 \lesssim h^{1-\ta} \|w\|_{2,\ta}.
\end{align}
This is a straightforward consequence of the modification of the scaling arguments in the interpolation error and \cref{Shest}.

%----Bibliography----------------------------------------------------------------------------------------------
\bibliographystyle{amsplain}
\bibliography{references}
\end{document}